\DeclareMathOperator{\fs}{fs}
\DeclareMathOperator{\elm}{elm}
\newtheorem{theorem}{Theorem}
\newtheorem{claim}{Claim}[theorem]
\newtheorem{proposition}[theorem]{Proposition}
\newtheorem{lemma}[theorem]{Lemma}
\newtheorem{corollary}[theorem]{Corollary}
\def\claimqed{\smash{\scalebox{.75}[0.75]{$(\square$)}}}
\title{Edge Contraction and Forbidden Induced Subgraphs}
\author{Hany Ibrahim \qquad  Peter Tittmann\\University of Applied Sciences Mittweida}
\date{}
\begin{document}

\maketitle

\begin{abstract}
	Given a family of graphs $\mathcal{H}$, a graph $G$ is $\mathcal{H}$-free if any subset of $V(G)$ does not induce a subgraph of $G$ that is isomorphic to any graph in $\mathcal{H}$. We present sufficient and necessary conditions for a graph $G$ such that $G/e$ is $\mathcal{H}$-free for any edge $e$ in $E(G)$. Thereafter, we use these conditions to characterize claw-free, $2K_{2}$-free, $P_{4}$-free, $C_{4}$-free, $C_{5}$-free, split, pseudo-split, and threshold graphs.	
\end{abstract}

\section{Introduction}
A graph $G$ is an ordered pair $(V(G),E(G))$ where $V(G)$ is a set of vertices and $E(G)$ is a set of $2$-elements subsets of $V(G)$ called edges. The set of all graphs is $\mathcal{G}$. The degree of a vertex $v$, denoted by $deg(v)$, is the number of edges incident to $v$. We denote the maximum degree of a vertex in a graph $G$ by $\Delta(G)$. We call two vertices adjacent if there is an edge between them, otherwise, we call them nonadjacent. Moreover, the set of all vertices adjacent to a vertex $v$ is called the \emph{neighborhood} of $v$, which we denote by $N(v)$. On the other hand, the \emph{closed neighborhood} of $v$, denoted by $N[v]$, is $N(v) \cup \{v\}$. Generalizing this to a set of vertices $S$, the neighborhood of $S$, denoted by $N(S)$, is defined by $N(S) := \bigcup_{v\in S} N(v) - S$. Similarly the closed neighborhood of $S$, denoted by $N[S]$, is $N(S) \cup S$. Moreover, for a subset of vertices $S$, we denote the set of vertices in $S$ that are adjacent to $v$ by $N_{S}(v)$. Furthermore, we write $v$ is adjacent to $S$ to mean that $S \subseteq N(v)$ and $v$ is adjacent to exactly $S$ to mean that $S = N(v)$.

A set of vertices $S$ is \emph{independent} if there is no edge between any two vertices in $S$. We call a set $S$ \emph{dominating} if $N[S] = V(G)$. A subgraph $H$ of a graph $G$ is a graph where $V(H) \subseteq V(G)$ and $E(H) \subseteq E(G)$. An \emph{induced} graph $G[S]$ for a given set $S \subseteq V$, is a subgraph of $G$ with vertex set $S$ and two vertices in $G[S]$ are adjacent if and only if they are adjacent in $G$. Two graphs $G,H$ are \emph{isomorphic} if there is a bijective mapping $f: V(G) \to V(H)$ where $u,v \in V(G)$ are adjacent if and only if $f(u),f(v)$ are adjacent in $H$. In this case we call the mapping $f$ an isomorphism. Two graphs that are not isomorphic are called \emph{non-isomorphic}. In particular, an isomorphism from a graph to itself is called \emph{automorphism}. Furthermore, two vertices $u,v$ are similar in a graph $G$ if there is an automorphism that maps $u$ to $v$. The set of all automorphisms of a graph $G$ forms a group called the automorphism group of $G$, denoted by \emph{Aut($G$)}. The complement of a graph $G$, denoted by $\bar{G}$, is a graph with the same vertex set as $V(G)$ and two vertices in $\bar{G}$ are adjacent if and only if they are nonadjacent in $G$.

The \emph{independence number} of a graph $G$, denoted by $\alpha(G)$, is the largest cardinality of an independent set in $G$. In this thesis, we write singletons $\{x\}$ just as $x$ whenever the meaning is clear from the context. A vertex $u$ is a \emph{corner dominated} by $v$ if $N[u] \subseteq N[v]$.
Let $\mathcal{H}$ be a set of graphs. A graph $G$ is called \emph{$\mathcal{H}$-free} if there is no induced subgraph of $G$ that is isomorphic to any graph in $\mathcal{H}$, otherwise, we say $G$ is \emph{$\mathcal{H}$-exist}. 

By \emph{contracting} the edge between $u$ and $v$, we mean the graph constructed from $G$ by adding a vertex $w$ with edges from $w$ to the union of the neighborhoods of $u$ and $v$, followed by removing $u$ and $v$. We denote the graph obtained from contracting $uv$ by $G/uv$. If $e$ is the edge between $u$ and $v$, then we also denote the graph $G/uv$ by $G/e$. Further, we call $G/e$ a $G$-contraction. 
Finally, any graph in this paper is simple. For notions not defined, please consult \cite{bondy2000graph}. Additionally, we divide longer proofs into smaller claims, and we prove them only if their proofs are not apparent.

For a graph invariant $c$, a graph $G$, and a $G$-contraction $H$, the question of how $c(G)$ differs from $c(H)$ is investigated for different graph invariants. For instance, how contracting an edge in a graph affects its $k$-connectivity. Hence, the intensively investigated (\cite{kriesell2002survey}) notion of \emph{$k$-contractible} edges in a $k$-connected graph $G$ is defined as the edge whose contraction yields a $k$-connected graph. Another example is in the game Cops and Robber where a policeman and a robber are placed on two vertices of a graph in which they take turns to move to a neighboring vertex. For any graph $G$, if the policeman can always end in the same vertex as the robber, we call $G$ \emph{cop-win}. However, $G$ is \emph{$CECC$} if it is not cop-win, but any $G$-contraction is cop-win. The characteristics of a $CECC$ graph are studied in \cite{cameron2015edge}.

A further example is the investigation of the so-called \emph{contraction critical}, with respect to independence number, that is, an edge $e$ in a graph $G$ where $\alpha(G/e) \leq \alpha(G)$, studied in \cite{plummer2014note}. Furthermore, the case where $c$ is the chromatic and clique number, respectively, has been investigated in \cite{diner2018contraction, paulusma2016reducing, paulusma2019critical}.

In this article, we investigate the graph invariant $H$-free for a given set of graphs $\mathcal{H}$. In particular, we present sufficient and necessary conditions for a graph $G$ such that any $G$-contraction is $\mathcal{H}$-free.

Let $\mathcal{H}$ be a set of graphs. The set of \emph{elementary (minimal)} graphs in $\mathcal{H}$, denoted by \emph{$\elm$($\mathcal{H}$)}, is defined as $\{H \in \mathcal{H}: $ if $ G \in \mathcal{H} $ and $H$ is $G$-exist, then $G$ is isomorphic to $H\}$.
From the previous definition, we can directly obtain the following.
\begin{proposition}{\label{prop: elem}}
	Let $\mathcal{H}$ be a set of graphs. Graph $G$ is $\mathcal{H}$-free if and only if G is $\elm(\mathcal{H})$-free.
\end{proposition}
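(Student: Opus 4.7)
The plan is to prove the two implications separately; the forward direction is essentially a containment argument, while the backward direction requires an extremal selection inside $\mathcal{H}$ to produce a minimal (elementary) forbidden subgraph whenever any forbidden subgraph is present.

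First I would observe that $\elm(\mathcal{H}) \subseteq \mathcal{H}$ directly by definition. Consequently, if $G$ is $\mathcal{H}$-free, then $G$ has no induced subgraph isomorphic to any member of $\mathcal{H}$, and in particular none isomorphic to any member of $\elm(\mathcal{H})$, so $G$ is $\elm(\mathcal{H})$-free.

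For the converse I would argue by contrapositive. Suppose $G$ is not $\mathcal{H}$-free, so some $H \in \mathcal{H}$ is isomorphic to an induced subgraph of $G$. Let $\mathcal{S}$ be the nonempty family of graphs in $\mathcal{H}$ that are isomorphic to an induced subgraph of $G$. Since every graph in the paper is finite, $\mathcal{S}$ admits an element $H^{*}$ of minimum vertex count. I then claim $H^{*} \in \elm(\mathcal{H})$: otherwise the definition of $\elm(\mathcal{H})$ supplies some $G' \in \mathcal{H}$ with $G'$ isomorphic to an induced subgraph of $H^{*}$ but $G' \not\cong H^{*}$, which forces $|V(G')| < |V(H^{*})|$. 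Transitivity of the induced-subgraph relation then places $G'$ in $\mathcal{S}$, contradicting the minimality of $H^{*}$. Hence $H^{*} \in \elm(\mathcal{H})$ is induced in $G$, and $G$ is not $\elm(\mathcal{H})$-free.

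The only delicate point is unpacking the phrase ``$H$ is $G$-exist'' in the definition of $\elm(\mathcal{H})$, namely that $G$ is isomorphic to an induced subgraph of $H$, so that the elementary condition becomes exactly minimality within $\mathcal{H}$ under the induced-subgraph relation. Once this is fixed, the argument is routine and I do not anticipate any substantive obstacle.
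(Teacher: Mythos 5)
Your proof is correct and is exactly the routine minimality argument the paper has in mind; the paper itself omits the proof, presenting the proposition as following directly from the definition of $\elm(\mathcal{H})$. Your careful unpacking of ``$H$ is $G$-exist'' and the extremal choice of a minimum-order member of $\mathcal{H}$ induced in $G$ supply precisely the details being elided.
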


We call an $\mathcal{H}$-free graph $G$, \emph{strongly $\mathcal{H}$-free} if any $G$-contraction is $\mathcal{H}$-free. Furthermore, an $\mathcal{H}$-exist graph $G$ is a \emph{critically $\mathcal{H}$-exist} if any $G$-contraction is $\mathcal{H}$-free.
If we add any number of isolated vertices to a strongly $H$-free or critically $H$-exist graph, we obtain a graph with same property. Thus, from this section and forward, we exclude graphs having isolated vertices unless otherwise stated.

We conclude directly the following.
\begin{proposition}{\label{Theore: forbidden}}
	Let $\mathcal{H}$ be a set of graphs and $G$ be a graph where $G$ is neither critically $\mathcal{H}$-exist nor $\mathcal{H}$-free but not strongly $\mathcal{H}$-free. The graph $G$ is $\mathcal{H}$-free if and only if any $G$-contraction is $\mathcal{H}$-free.
\end{proposition}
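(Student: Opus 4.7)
The plan is to unpack the definitions and notice that they partition the universe of graphs into exactly four categories, two of which are ruled out by the hypothesis. Let $A$ denote the statement ``$G$ is $\mathcal{H}$-free'' and $R$ denote the statement ``every $G$-contraction is $\mathcal{H}$-free.'' Translating the three terms that appear in the hypothesis into this language gives: strongly $\mathcal{H}$-free $\equiv A \wedge R$; $\mathcal{H}$-free but not strongly $\mathcal{H}$-free $\equiv A \wedge \neg R$; critically $\mathcal{H}$-exist $\equiv \neg A \wedge R$. The fourth (unnamed) box, $\neg A \wedge \neg R$, is just ``$\mathcal{H}$-exist with at least one $\mathcal{H}$-exist contraction.''

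Next I would carry out the following case analysis. The hypothesis that $G$ is neither critically $\mathcal{H}$-exist nor ($\mathcal{H}$-free but not strongly $\mathcal{H}$-free) eliminates the two ``mixed'' cases $A \wedge \neg R$ and $\neg A \wedge R$. Therefore $G$ must fall into one of the two ``aligned'' cases $A \wedge R$ or $\neg A \wedge \neg R$. In the first, both $A$ and $R$ hold, so $A \Leftrightarrow R$ is true; in the second, both fail, so $A \Leftrightarrow R$ is again true. Hence in either surviving case the biconditional $A \Leftrightarrow R$—which is precisely the statement of the proposition—holds.

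The main ``obstacle'' here is not mathematical but linguistic: parsing the scope of the connectives ``neither\dots nor\dots but not\dots'' correctly, so that ``$\mathcal{H}$-free but not strongly $\mathcal{H}$-free'' is read as a single excluded class rather than splitting $\mathcal{H}$-free apart from strongly $\mathcal{H}$-free. Once that is settled, nothing beyond the definitions is used, which is consistent with the paper's remark that the proposition is concluded directly. I would therefore present the proof as a one-paragraph argument listing the four mutually exclusive cases, striking through the two forbidden by the hypothesis, and verifying $A \Leftrightarrow R$ in each of the remaining two.
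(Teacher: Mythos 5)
Your proof is correct and is exactly the argument the paper intends: the paper offers no written proof ("We conclude directly the following"), and the intended justification is precisely your four-way partition by the truth values of "$G$ is $\mathcal{H}$-free" and "every $G$-contraction is $\mathcal{H}$-free," with the hypothesis excluding the two mixed cases. Your parsing of "neither \dots nor [$\mathcal{H}$-free but not strongly $\mathcal{H}$-free]" as excluding the single class $A\wedge\neg R$ is the right one, as confirmed by how the proposition is later combined with Proposition \ref{Theorem: key} to yield Theorem \ref{Theorem: characeterization}.
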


Given a graph $G$ and a set of graphs $\mathcal{H}$, we call $G$ \emph{$\mathcal{H}$-split} if there is a $G$-contraction isomorphic to a graph in $\mathcal{H}$. Furthermore, $G$ is \emph{$\mathcal{H}$-free-split} if $G$ is $\mathcal{H}$-split and $\mathcal{H}$-free. Moreover, the set of all $\mathcal{H}$-free-split graphs, for a given $\mathcal{H}$, is denoted by \emph{$\fs(\mathcal{H})$}.

\begin{proposition}{\label{Theorem: key}}
	Let $\mathcal{H}$ be a set of graphs and $G$ be a $\mathcal{H}$-free graph. Then $G$ is strongly $\mathcal{H}$-free if and only if $G$ is $\fs(\mathcal{H})$-free.
\end{proposition}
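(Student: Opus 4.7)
The plan is to prove both directions by contrapositive, relying on a single elementary observation that I will call the commutation lemma: if $G'$ is an induced subgraph of a graph $G$ and $e = uv$ is an edge of $G'$, then the induced subgraph of $G/e$ on the vertex set $(V(G') \setminus \{u,v\}) \cup \{w\}$, where $w$ is the new vertex resulting from the contraction, is isomorphic to $G'/e$. This follows directly from the definitions, since for any vertex $x \in V(G') \setminus \{u,v\}$ the adjacency of $x$ to $u$ or to $v$ inside $G'$ coincides with its adjacency inside $G$ (because $G'$ is induced), so the new vertex $w$ has the same neighbors in the two constructions.

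For the forward direction, I would assume $G$ is not $\fs(\mathcal{H})$-free and produce a contraction witnessing the failure of strong $\mathcal{H}$-freeness. By assumption, $G$ contains an induced subgraph $G'$ isomorphic to some $F \in \fs(\mathcal{H})$. Since $F$ is $\mathcal{H}$-split, there is an edge $e \in E(F)$ (which corresponds to an edge $e \in E(G')\subseteq E(G)$ via the isomorphism) such that $F/e$ is isomorphic to some $H \in \mathcal{H}$. Applying the commutation lemma to $G' \subseteq G$ and the edge $e$, the graph $G/e$ contains an induced copy of $G'/e \cong H$, so $G/e$ is $\mathcal{H}$-exist, contradicting that $G$ is strongly $\mathcal{H}$-free.

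For the backward direction, I would assume $G$ is $\mathcal{H}$-free but not strongly $\mathcal{H}$-free, so there exists an edge $e = uv \in E(G)$ such that $G/e$ contains an induced subgraph isomorphic to some $H \in \mathcal{H}$ on a vertex set $S$ of $G/e$. The argument now splits on whether the contracted vertex $w$ lies in $S$. If $w \notin S$, then $S \subseteq V(G) \setminus \{u,v\}$ and $G[S] = (G/e)[S] \cong H$, contradicting $\mathcal{H}$-freeness of $G$. Otherwise, set $G' := G[(S \setminus \{w\}) \cup \{u,v\}]$; this graph is $\mathcal{H}$-free as an induced subgraph of $G$, and the commutation lemma gives $G'/e \cong (G/e)[S] \cong H$, so $G' \in \fs(\mathcal{H})$. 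Since $G'$ is an induced subgraph of $G$, this contradicts $\fs(\mathcal{H})$-freeness of $G$.

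The only genuine content is the commutation lemma; once it is stated cleanly, both directions reduce to bookkeeping. The main subtlety I anticipate is making sure that in the backward direction the case $w \in S$ is handled correctly, in particular that the ``lifted'' set $(S \setminus \{w\}) \cup \{u,v\}$ always yields an $\mathcal{H}$-free-split witness rather than merely an $\mathcal{H}$-exist graph; the hypothesis that $G$ itself is $\mathcal{H}$-free is exactly what guarantees this.
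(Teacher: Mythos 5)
Your proof is correct and follows essentially the same route as the paper: the forward direction is the paper's argument made explicit via your commutation lemma, and the backward direction identifies the same witness subgraph, except that you construct it explicitly as $(S\setminus\{w\})\cup\{u,v\}$ where the paper instead invokes a minimal vertex set $U$. Your explicit lifting is in fact the cleaner way to justify the paper's terse claim that the witness is $\mathcal{H}$-free-split (i.e.\ that its contraction is isomorphic to, not merely contains, a graph in $\mathcal{H}$).
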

\begin{proof}
	Assume for the sake of contradiction that there exists a strongly $\mathcal{H}$-free graph $G$ with an induced $\mathcal{H}$-free-split subgraph $J$. Consequently, there is an edge $e$ in $J$ such that $J/e$ induces a graph in $\mathcal{H}$. As a result, $G/e$ is $\mathcal{H}$-exist, which contradicts the fact that $G$ is strongly $\mathcal{H}$-free. 
	
	In contrast, if $G$ is an $\mathcal{H}$-free but not a strongly $\mathcal{H}$-free, then there is a set $U \subseteq V(G)$ such that there is an edge $e \in E(G[U])$ where $G/e$ is $\mathcal{H}$-exist. Let $U$ be a minimum set with such a property. Thus $G[U]$ is $\mathcal{H}$-free-split.
\end{proof}

From Propositions \ref{Theore: forbidden} and \ref{Theorem: key}, we deduce the following.
\begin{theorem}{\label{Theorem: characeterization}}
	Let $\mathcal{H}$ be a set of graphs and $G$ be a $\fs(\mathcal{H})$-free graph where $G$ is not critically $\mathcal{H}$-exist. The graph $G$ is $\mathcal{H}$-free if and only if any $G$-contraction is $\mathcal{H}$-free.
\end{theorem}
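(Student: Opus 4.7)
My plan is to derive Theorem~\ref{Theorem: characeterization} as a short syllogism from Propositions~\ref{Theore: forbidden} and~\ref{Theorem: key}, with no new combinatorial content involved. The first step is just to parse the hypothesis of Proposition~\ref{Theore: forbidden} cleanly: it is the conjunction of two conditions, namely that $G$ is not critically $\mathcal{H}$-exist, and that $G$ is not simultaneously $\mathcal{H}$-free yet not strongly $\mathcal{H}$-free. The second condition is equivalent to the implication ``$G$ is $\mathcal{H}$-free $\Rightarrow$ $G$ is strongly $\mathcal{H}$-free''.

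Given a $\fs(\mathcal{H})$-free graph $G$ that is not critically $\mathcal{H}$-exist, the first condition is immediate by assumption. For the second, I would invoke Proposition~\ref{Theorem: key}: if $G$ happens also to be $\mathcal{H}$-free, then because it is $\fs(\mathcal{H})$-free it is forced to be strongly $\mathcal{H}$-free. So the required implication holds vacuously or by Proposition~\ref{Theorem: key}, and thus both hypotheses of Proposition~\ref{Theore: forbidden} are met; its conclusion is exactly the equivalence claimed in the theorem.

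Since the whole argument is a single substitution into an already proved statement, I do not expect any genuine obstacle. The only point that demands care is unpacking the double negation in Proposition~\ref{Theore: forbidden} in the right order, so that ``neither critically $\mathcal{H}$-exist nor $\mathcal{H}$-free but not strongly $\mathcal{H}$-free'' is read as a conjunction rather than a disjunction; once this is sorted out, the deduction is mechanical and the proof should fit in a couple of lines.
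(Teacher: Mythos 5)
Your proposal is correct and is exactly the derivation the paper intends: the theorem is stated as an immediate consequence of Propositions~\ref{Theore: forbidden} and~\ref{Theorem: key}, with the only work being to check that a $\fs(\mathcal{H})$-free, not critically $\mathcal{H}$-exist graph satisfies the (conjunctive) hypothesis of Proposition~\ref{Theore: forbidden}, the second conjunct following from Proposition~\ref{Theorem: key} precisely as you argue. No gap; your reading of the ``neither \dots nor \dots'' clause as a conjunction of two negations is the one that makes the propositions fit together.
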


Theorem \ref{Theorem: characeterization} provides a sufficient and necessary condition that answers the question we investigate in this thesis, however, it translates the problem to determining characterizations for critically $\mathcal{H}$-exist and $\mathcal{H}$-free-split graphs for a set of graphs $\mathcal{H}$. In Sections \ref{Section: CC(H)} and \ref{Section: EC(H)}, we present some properties for these families of graphs.

\subsection{The \texorpdfstring{$\mathcal{H}$}{H}-Split Graphs}{\label{Section: CC(H)}}
Let $H$ be a graph with $v \in V(H)$ and $N_{H}(v) = U \cup W$. The \emph{$splitting(H,v,U,W)$} is the graph obtained from $H$ by removing $v$ and adding two vertices $u$ and $w$ where $ N_{H}(u)= U \cup \{w\}$ and $ N_{H}(w)= W \cup \{u\}$. Furthermore, \emph{$splitting(H,v)$} is the set of all graphs for any possible $U$ and $W$. Moreover, \emph{$splitting(H)$} is the union of the $splitting(H,v)$ for any vertex $v \in V(H)$. Given a set of graphs $\mathcal{H}$, \emph{$splitting(\mathcal{H})$} is the union of the splittings of every graph in $\mathcal{H}$.

\begin{theorem}{\label{Theorem: CC=H[]}}
	For a graph $G$ and a set of graphs $\mathcal{H}$, $G$ is an $\mathcal{H}$-split if and only if $G \in splitting(\mathcal{H})$.
\end{theorem}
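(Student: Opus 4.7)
The plan is to prove both directions by straightforward construction, since the operations of \emph{contraction} and \emph{splitting} are essentially inverses of each other.

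For the reverse implication, I would take an arbitrary $G \in splitting(\mathcal{H})$, so that $G = splitting(H,v,U,W)$ for some $H \in \mathcal{H}$, $v \in V(H)$, and $U, W$ with $U \cup W = N_H(v)$. By definition, $V(G) = (V(H) \setminus \{v\}) \cup \{u,w\}$, the edge $uw$ is in $E(G)$, and $N_G(u) = U \cup \{w\}$ and $N_G(w) = W \cup \{u\}$. Contracting the edge $uw$ merges $u$ and $w$ into a single vertex $v'$ whose neighborhood in $G/uw$ is exactly $(N_G(u) \cup N_G(w)) \setminus \{u,w\} = U \cup W = N_H(v)$, while all edges not incident to $u$ or $w$ are preserved. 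The map sending $v' \mapsto v$ and fixing every other vertex is therefore a graph isomorphism $G/uw \to H$, showing that $G$ is $\mathcal{H}$-split.

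For the forward implication, suppose $G$ is $\mathcal{H}$-split, so there is an edge $e = uw \in E(G)$ and an isomorphism $\phi: G/uw \to H$ for some $H \in \mathcal{H}$. Let $v = \phi(x)$, where $x$ denotes the vertex of $G/uw$ obtained by merging $u$ and $w$. Define $U := \phi(N_G(u) \setminus \{w\})$ and $W := \phi(N_G(w) \setminus \{u\})$, viewing $N_G(u) \setminus \{w\}$ and $N_G(w) \setminus \{u\}$ as subsets of $V(G/uw)$. Since $N_{G/uw}(x) = (N_G(u) \cup N_G(w)) \setminus \{u,w\}$, it follows that $N_H(v) = U \cup W$, so $splitting(H,v,U,W)$ is well-defined. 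One then checks that extending $\phi^{-1}$ by sending the two freshly introduced vertices of $splitting(H,v,U,W)$ to $u$ and $w$ respectively yields an isomorphism from $splitting(H,v,U,W)$ to $G$, using the fact that these new vertices have neighborhoods $U \cup \{w\text{-copy}\}$ and $W \cup \{u\text{-copy}\}$ by construction, which correspond exactly to $N_G(u)$ and $N_G(w)$.

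The only subtle point, which I would want to be explicit about, is the treatment of common neighbors of $u$ and $w$: when $U \cap W \neq \emptyset$, these correspond precisely to vertices of $G$ adjacent to both $u$ and $w$, and the definition of $splitting$ correctly allows $U$ and $W$ to overlap (it only demands $N_H(v) = U \cup W$, not a disjoint union). Beyond that, the proof is mechanical and the main task is simply bookkeeping: verifying edge-by-edge that the constructed bijection preserves adjacency in both directions.
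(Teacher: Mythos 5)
Your proposal is correct and follows essentially the same route as the paper: both directions are handled by observing that contraction of the edge $uw$ and the splitting operation are inverse constructions, with the forward direction reading off $U$ and $W$ from $N_G(u)$ and $N_G(w)$ via the isomorphism $G/uw \cong H$. Your version is simply more explicit than the paper's (in particular about transporting the data through the isomorphism $\phi$ and about the possibility $U \cap W \neq \emptyset$), which is a welcome level of care but not a different argument.
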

\begin{proof}
	Let $G$ be an $\mathcal{H}$-split. Hence there is a graph $H \in \mathcal{H}$ such that $G$ is $H$-split.
	Thus, there are two vertices $u,w \in V(G)$ such that $G/uw$ is isomorphic to $H$. Let $x := V(G/uw)-V(G)$, then $N_{G/uw}(x) = (N_{G}(u) \cup N_{G}(w)) - \{u,w\}$. As a result, $G \in splitting(H,x)$. Consequently, $G \in splitting(\mathcal{H})$.

	Conversely, let $G \in splitting(\mathcal{H})$. Hence there is a graph $H \in \mathcal{H}$ such that $G \in splitting(H)$.
	Thus, there are two adjacent vertices $u,w \in V(G)$ such that $G/uw \cong H$. Thus, $G$ is $\mathcal{H}$-split.
\end{proof}

For a set of graphs $\mathcal{H}$ and using Theorem \ref{Theorem: CC=H[]}, we can use $splitting(\mathcal{H})$ to construct all $\mathcal{H}$-split graphs, consequently $\mathcal{H}$-free-split graphs.

\begin{proposition}
	In a graph $G$, let $u,v \in V(G)$. If $u$ is similar to $v$, then $splitting(G,u) = splitting(G,v)$.
\end{proposition}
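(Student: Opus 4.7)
The plan is to use the automorphism guaranteed by similarity as a template for a bijection between $splitting(G,u)$ and $splitting(G,v)$. Since $u$ and $v$ are similar, there is some $\phi \in \mathrm{Aut}(G)$ with $\phi(u)=v$, and consequently $\phi$ restricts to a bijection from $N_G(u)$ onto $N_G(v)$.

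First I would establish the inclusion $splitting(G,u) \subseteq splitting(G,v)$. Given any bipartition $N_G(u) = U \cup W$, I would set $U' := \phi(U)$ and $W' := \phi(W)$, so that $U' \cup W' = N_G(v)$ is a valid bipartition for producing an element of $splitting(G,v)$. Writing $G_1 := splitting(G,u,U,W)$ with the new vertices labelled $u_1, w_1$, and $G_2 := splitting(G,v,U',W')$ with new vertices $u_2, w_2$, I would define $\psi: V(G_1) \to V(G_2)$ by $\psi(x) = \phi(x)$ for $x \in V(G)\setminus\{u\}$, together with $\psi(u_1) = u_2$ and $\psi(w_1) = w_2$. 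Since $\phi$ is an automorphism of $G$ sending $u$ to $v$, this $\psi$ is a well-defined bijection.

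Next I would verify that $\psi$ preserves adjacency by a brief case analysis: on pairs lying in $V(G)\setminus\{u\}$ the edges are inherited from $G$, and $\phi$ preserves them; the edge $u_1 w_1$ is mapped to $u_2 w_2$, which is present by definition of splitting; and an edge $u_1 y$ exists in $G_1$ exactly when $y \in U$, which by construction of $U' = \phi(U)$ holds exactly when $\psi(y) = \phi(y) \in U'$, i.e.\ exactly when $u_2 \psi(y) \in E(G_2)$, and analogously for $w_1 y$. Hence $G_1 \cong G_2$, giving the inclusion. Applying the same argument to $\phi^{-1}$ yields the reverse inclusion, so $splitting(G,u) = splitting(G,v)$.

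The only real obstacle is bookkeeping: keeping straight which of $u,v$ lives in which of $V(G_1), V(G_2)$, and making sure the partition transported by $\phi$ is the one intended. There is nothing deeper going on, since the statement is really just saying that an automorphism of $G$ fixing neither endpoint of a contraction can be lifted to an isomorphism of the corresponding splittings.
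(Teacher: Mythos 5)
Your argument is correct: lifting the automorphism $\phi$ with $\phi(u)=v$ to an isomorphism of the corresponding splittings is exactly the natural proof, and your case analysis of adjacency covers everything needed. The paper itself leaves this proposition unproved (it treats it as apparent), so there is nothing to contrast with; the only cosmetic remark is that the paper's definition writes $N_G(u)=U\cup W$ without requiring $U$ and $W$ to be disjoint, so ``bipartition'' is slightly too strong a word, but your argument goes through verbatim for an arbitrary cover since $\phi(U)\cup\phi(W)=N_G(v)$ regardless.
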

By the previous proposition, for a graph $H$, the steps to construct the $H$-free-split graphs are:
\begin{itemize}
	\item Let $\pi$ be the partition of $V(H)$ induced by the orbits generated from $Aut(H)$;
	\item for every orbit $o \in \pi$, we choose a vertex $v \in o$; and
	\item construct $splitting(H,v)$.
\end{itemize}

\begin{proposition}\label{pro: split graphs that are not free}
	Let $G$ be a graph, $v$ a vertex in $V(G)$ where $N_{G}(v) = U \cup W$. If $U=N_{G}(v)$ or $W=N_{G}(v)$, then $splitting(G,v,U,W)$ is not $G$-free-split.
\end{proposition}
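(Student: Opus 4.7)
The plan is to exhibit $G$ itself as an induced subgraph of $G':=splitting(G,v,U,W)$, so that $G'$ fails the $G$-free half of the $G$-free-split condition. (It is $G$-split automatically: contracting the edge $uw$ produces a single vertex whose neighborhood is $(N_{G'}(u)\cup N_{G'}(w))\setminus\{u,w\}=U\cup W=N_G(v)$, recovering $G$.)

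First I would dispose of the asymmetry: assume $U=N_G(v)$, the case $W=N_G(v)$ being symmetric under the interchange of the two new vertices $u$ and $w$ in the splitting construction. The hypothesis $U\cup W=N_G(v)$ then forces $W\subseteq U$, which is the fact that makes the argument go through—$w$'s neighborhood outside $\{u\}$ is already contained in $u$'s neighborhood.

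Next I would look at the induced subgraph of $G'$ on the vertex set $(V(G)\setminus\{v\})\cup\{u\}$, i.e.\ the subgraph obtained by deleting just $w$. My claim is that this subgraph is isomorphic to $G$ via the bijection sending $u\mapsto v$ and fixing every other vertex. The verification has two ingredients: edges among vertices of $V(G)\setminus\{v\}$ are untouched by the splitting operation (only edges incident to $v$ were rewired), and $N_{G'}(u)\setminus\{w\}=U=N_G(v)$ by assumption. Hence $G$ occurs as an induced subgraph of $G'$, so $G'$ is not $G$-free, and therefore not $G$-free-split.

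There is no real obstacle; the only subtlety worth naming is recognizing that the condition $U=N_G(v)$ makes $w$ essentially a redundant attachment (a vertex whose closed neighborhood sits inside $N_{G'}[u]$), so deleting $w$ leaves an isomorphic copy of $G$.
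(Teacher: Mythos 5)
Your proof is correct and is exactly the argument the paper treats as apparent (it gives no explicit proof of this proposition): with $U=N_G(v)$ the new vertex $w$ is redundant, and deleting it from $splitting(G,v,U,W)$ leaves an induced copy of $G$ via $u\mapsto v$, so the splitting is not $G$-free. The reduction of the case $W=N_G(v)$ by the $u\leftrightarrow w$ symmetry and the verification that only edges at $v$ are rewired are both handled properly.
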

\begin{proposition}\label{pro: split graphs that are not free with degree one}
	Let $G$ be a graph and $v$ a vertex in $V(G)$. If $deg(v)=1$, then $splitting(G,v)$ contains no $G$-free-split graph.
\end{proposition}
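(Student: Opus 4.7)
The plan is to invoke Proposition \ref{pro: split graphs that are not free} directly. When $\deg(v) = 1$, the open neighborhood $N_G(v)$ is a singleton, say $\{x\}$, so the only ways to write $N_G(v) = U \cup W$ are $(U, W) = (\{x\}, \emptyset)$ and $(U, W) = (\emptyset, \{x\})$. In either case one of $U$ or $W$ coincides with $N_G(v)$, which is precisely the hypothesis of Proposition \ref{pro: split graphs that are not free}. That proposition therefore applies to every element of $splitting(G,v)$, and since $splitting(G,v)$ is by definition the union of $splitting(G,v,U,W)$ over all admissible partitions, no graph in this set is $G$-free-split.

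For intuition, a direct argument is also transparent: take $U = \{x\}$ and $W = \emptyset$, so the splitting attaches a pendant vertex $w$ to the new vertex $u$ whose neighborhood becomes $\{x, w\}$. Deleting $w$ from $splitting(G,v,U,W)$ yields an induced subgraph in which $u$ has neighborhood exactly $\{x\}$, so $u$ reproduces the role of $v$ and we recover an induced copy of $G$. The symmetric partition behaves identically. Hence every graph in $splitting(G,v)$ contains $G$ as an induced subgraph and in particular fails to be $G$-free. There is no real obstacle here: the statement is a clean specialization of the preceding proposition, and the only work is to observe that a one-element neighborhood admits essentially a single partition.
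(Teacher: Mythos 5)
Your argument is correct and is exactly the specialization the paper has in mind (the paper omits the proof as apparent): since $N_G(v)$ is a singleton, every admissible pair $(U,W)$ with $U\cup W=N_G(v)$ has $U=N_G(v)$ or $W=N_G(v)$, so Proposition \ref{pro: split graphs that are not free} applies to every member of $splitting(G,v)$. The only nitpick is that $U$ and $W$ need not be disjoint, so $(\{x\},\{x\})$ is also admissible, but that case likewise satisfies the hypothesis of Proposition \ref{pro: split graphs that are not free}, so nothing changes.
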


\begin{proposition}\label{pro: no free-split for paths}
	If $G$ is a path, then $splitting(G)$ contains no $G$-free-split graph.
\end{proposition}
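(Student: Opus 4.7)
The plan is to exhaust the cases based on which vertex of the path we split and how we partition its neighborhood, using the two preceding propositions to immediately discard the degenerate cases.

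Write $G = P_n$ with vertices $v_1, v_2, \ldots, v_n$ and edges $v_iv_{i+1}$. First, the two endpoints $v_1$ and $v_n$ have degree $1$, so by Proposition~\ref{pro: split graphs that are not free with degree one}, no element of $splitting(G,v_1)$ or $splitting(G,v_n)$ is $G$-free-split. This reduces the problem to the internal vertices $v_i$ with $2 \le i \le n-1$, each of degree $2$ with $N_G(v_i) = \{v_{i-1}, v_{i+1}\}$.

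Next, for such an internal vertex, the possible ordered bipartitions $N_G(v_i) = U \cup W$ are exactly four: the two trivial ones where $U = N_G(v_i)$ or $W = N_G(v_i)$, which are ruled out by Proposition~\ref{pro: split graphs that are not free}, and the two nontrivial ones $\{U,W\} = \{\{v_{i-1}\},\{v_{i+1}\}\}$, which yield isomorphic splittings. Thus up to isomorphism there is only one graph to analyse: the graph obtained from $P_n$ by replacing $v_i$ with two adjacent vertices $u, w$, where $u$ keeps the neighbour $v_{i-1}$ and $w$ keeps the neighbour $v_{i+1}$.

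The key observation is that this splitting is precisely the path $P_{n+1}$ on the vertices $v_1, \ldots, v_{i-1}, u, w, v_{i+1}, \ldots, v_n$. Since $P_{n+1}$ contains $P_n$ as an induced subgraph (delete either endpoint), the splitting is not $G$-free and hence not $G$-free-split. Combined with the reduction above, this shows that $splitting(G)$ contains no $G$-free-split graph. I do not anticipate a real obstacle here; the only subtlety is verifying that the splitting really yields a path and not a different graph, which is immediate from the degree-$2$ case. The small boundary cases $n \le 2$ are handled entirely by Proposition~\ref{pro: split graphs that are not free with degree one}, since all vertices then have degree at most $1$.
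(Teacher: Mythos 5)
Your proof is correct and is essentially the argument the paper intends (the paper omits the proof as apparent): endpoints are dismissed by Proposition~\ref{pro: split graphs that are not free with degree one}, trivial partitions by Proposition~\ref{pro: split graphs that are not free}, and the one remaining splitting of an internal degree-$2$ vertex is $P_{n+1}$, which contains an induced $P_n$. The only cosmetic inaccuracy is the count of ``exactly four'' ordered pairs $(U,W)$ --- the definition does not require $U$ and $W$ to be disjoint or nonempty --- but every pair with $U\cup W=N_G(v_i)$ other than the disjoint singleton bipartition has $U=N_G(v_i)$ or $W=N_G(v_i)$, so your case analysis is still exhaustive.
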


\begin{proposition}\label{pro: only one free-split for cycles}
	If $G$ is a $C_{n}$ for an integer $n \geq 3$, then the $G$-free-split is $C_{n+1}$.
\end{proposition}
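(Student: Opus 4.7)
The plan is to apply Theorem \ref{Theorem: CC=H[]} to reduce the problem to enumerating $splitting(C_n)$ and then to filter by the $C_n$-free condition. Since all vertices of $C_n$ are similar (the dihedral group acts transitively on the vertices of $C_n$), the preceding proposition about orbits lets me fix a single vertex $v \in V(C_n)$ and examine only $splitting(C_n, v)$. Then a graph is a $C_n$-free-split if and only if it lies in $splitting(C_n, v)$ and is $C_n$-free.

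Next I would enumerate the splittings. Write $N_{C_n}(v) = \{v', v''\}$, so the possible ordered partitions $(U,W)$ of the $2$-element set $N(v)$ are four in number. By Proposition \ref{pro: split graphs that are not free}, the two choices with $U = N(v)$ or $W = N(v)$ do not produce $C_n$-free-split graphs, so I discard them. That leaves the two genuinely balanced choices $U = \{v'\}$, $W = \{v''\}$ and $U = \{v''\}$, $W = \{v'\}$, which are interchanged by swapping the roles of the two new vertices $u$ and $w$, and therefore yield isomorphic graphs.

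For the balanced splitting, the new vertices $u,w$ satisfy $N(u) = \{v', w\}$ and $N(w) = \{v'', u\}$, while the edge $v'v''$ of $C_n$ (when $n=3$) or the path $v' \cdots v''$ along the rest of $C_n$ (when $n \geq 4$) is retained. Tracing the resulting cycle $v' \to \cdots \to v'' \to w \to u \to v'$ shows that the graph obtained is exactly $C_{n+1}$. (For $n = 3$ this gives $C_4$, matching the formula.)

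The only remaining step, which is the least routine, is to verify that $C_{n+1}$ is indeed $C_n$-free, so that this splitting really is a $C_n$-free-split graph rather than merely a $C_n$-split one. The key observation is that every proper induced subgraph of a cycle $C_{n+1}$ is a disjoint union of paths: deleting even one vertex from $C_{n+1}$ produces $P_n$, and deleting more vertices produces graphs with at most $n-1$ vertices, none of which can contain $C_n$. Hence $C_{n+1}$ has no induced $C_n$, and it is the unique $C_n$-free-split graph. I anticipate no serious obstacle beyond making this last verification explicit; the main care needed is in handling the $n=3$ case correctly and in invoking Proposition \ref{pro: split graphs that are not free} to justify ignoring the two degenerate splittings.
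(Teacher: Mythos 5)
Your proposal is correct and is essentially the argument the paper leaves implicit (the proposition is stated without proof, relying on the splitting machinery of Theorem \ref{Theorem: CC=H[]} and Propositions preceding it). One small point: the paper's definition of $splitting(H,v,U,W)$ only requires $N_H(v)=U\cup W$, so $U$ and $W$ may overlap and there are more than four ordered pairs $(U,W)$; this does not affect your conclusion, since every pair other than the two balanced ones has $U=N(v)$ or $W=N(v)$ and is discarded by Proposition \ref{pro: split graphs that are not free} exactly as you argue.
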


\subsection{Critically \texorpdfstring{$\mathcal{H}$}{H}-Exist Graphs}{\label{Section: EC(H)}}

\begin{theorem}{\label{Theorem: critical H-exist: not S is independent}}
	Let $G$ be a graph and $\mathcal{H}$ be a set of graphs.
	If $G$ is a critically $\mathcal{H}$-exist, then for any $S \subseteq V(G)$ such that $G[S]$ is isomorphic to a graph in $\mathcal{H}$, the followings properties hold:
	\begin{enumerate}
		\item $V(G) - S$ is independent and
		\item there is no corner in $V(G) - S$ that is dominated by a vertex in $S$.
	\end{enumerate}
\end{theorem}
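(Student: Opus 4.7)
The plan is a direct contradiction argument for each of the two conclusions: assume the stated property fails, then exhibit a specific edge whose contraction leaves an induced copy of a graph of $\mathcal{H}$ intact in $G/e$, contradicting the assumption that $G$ is critically $\mathcal{H}$-exist.

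For part 1, suppose $V(G) \setminus S$ contains two adjacent vertices $u$ and $v$. Contract the edge $e = uv$. Since $u,v \notin S$, the set $S$ is still a subset of $V(G/e)$, and no edge with both endpoints in $S$ is affected by contracting $e$. Hence $G/e[S] = G[S]$ is isomorphic to a graph in $\mathcal{H}$, contradicting the assumption that $G/e$ is $\mathcal{H}$-free. The only thing to check here is that an edge with both endpoints outside $S$ leaves the induced subgraph on $S$ genuinely unchanged, which is immediate from the definition of contraction.

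For part 2, suppose there is a corner $u \in V(G) \setminus S$ dominated by some $v \in S$, i.e.\ $N[u] \subseteq N[v]$. First I would note that $uv$ really is an edge: $u \in N[u] \subseteq N[v]$ and $u \neq v$ give $u \in N(v)$. Contract $e = uv$ to obtain a new vertex $w$, and put $S' := (S \setminus \{v\}) \cup \{w\}$. The claim to verify is that $G/e[S'] \cong G[S]$ via the bijection fixing $S \setminus \{v\}$ and sending $v \mapsto w$. Adjacencies inside $S \setminus \{v\}$ are untouched by contraction. For $x \in S \setminus \{v\}$, one has $x \in N_{G/e}(w)$ iff $x \in (N_G(u) \cup N_G(v)) \setminus \{u,v\}$; since $u \notin S$ and $N_G(u) \setminus \{v\} \subseteq N_G(v)$ (because $N[u] \subseteq N[v]$), this simplifies to $x \in N_G(v)$, which is exactly the condition for $vx \in E(G[S])$. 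Hence $G/e[S']$ is isomorphic to $G[S]$, and in particular to a graph in $\mathcal{H}$, again contradicting that $G/e$ is $\mathcal{H}$-free.

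The only mildly delicate step is the neighborhood bookkeeping in part 2, where the corner condition $N[u] \subseteq N[v]$ is used precisely to ensure that merging $u$ into $v$ does not introduce any new adjacencies to vertices of $S$ beyond those $v$ already had. Once that observation is in place, both parts collapse to one-line contradictions.
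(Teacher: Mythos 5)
Your proposal is correct and follows essentially the same route as the paper: for each part, contract the offending edge and observe that an induced copy of the forbidden graph survives, contradicting critical $\mathcal{H}$-existence. Your part 2 is in fact slightly more careful than the paper's version (which loosely writes $G/uv[S]$ even though $v$ no longer exists after contraction), since you explicitly replace $v$ by the merged vertex and verify via the corner condition $N[u]\subseteq N[v]$ that no new adjacencies to $S$ are introduced.
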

\begin{proof}
	\begin{enumerate}
		\item For the sake of contradiction, assume there is a $S \subseteq V(G)$ such that $G[S]$ is isomorphic to a graph $H \in \mathcal{H}$  but $V(G) - S$ is not independent. Hence, there are two vertices $u,v \in V(G) - S$ where $u$ and $v$ are adjacent. Consequently, $G/uv[S]$ is isomorphic to $H$, which contradicts the fact that $G$ is a critically $\mathcal{H}$-exist.

		\item Since $V(G) - S$ is independent, the neighborhood of any vertex in $V(G) - S$ is a subset of $S$. For the sake of contradiction, assume that there is a corner $u \in V(G) - S$ that is dominated by $v \in S$. However, $G/uv[S]$ is isomorphic to a graph $H \in \mathcal{H}$, which contradicts the fact that $G$ is a critically $\mathcal{H}$-exist.
	\end{enumerate}
\end{proof}

\begin{corollary}\label{coro: no vertex adajcet to 1 or 2 or 3 or max degree in critical graph}
	Let $G$ be a critically $\mathcal{H}$-exist graph for a set of graphs $\mathcal{H}$. If $S$ is a vertex set that induces a graph in $\mathcal{H}$, then no vertex in $V(G)-S$ is adjacent to exactly one vertex, two adjacent vertices, three vertices that induce either $P_{3}$ or $C_{3}$, or a vertex with degree $|V(G)|-1$.
\end{corollary}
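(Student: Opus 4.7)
The plan is to reduce each of the four forbidden configurations directly to the second conclusion of Theorem \ref{Theorem: critical H-exist: not S is independent}, which forbids any corner in $V(G)-S$ that is dominated by a vertex of $S$. The crucial preliminary observation is the first conclusion of that same theorem: $V(G)-S$ is independent, so every neighbor of a candidate vertex $v \in V(G)-S$ lies entirely in $S$. This is what lets us compute $N[v]$ exactly from the local data in each case and then search within those neighbors for an explicit dominator.

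The argument then proceeds case-by-case, each time producing a vertex $u \in S$ with $N[v] \subseteq N[u]$. If $v$ has a single neighbor $u$, then $N[v]=\{u,v\}\subseteq N[u]$ is immediate. If $N(v) = \{u_1,u_2\}$ with $u_1u_2 \in E(G)$, then $N[u_1]\supseteq\{u_1,u_2,v\}=N[v]$. If the three neighbors of $v$ induce $C_3$, any one of them has the other two plus $v$ in its closed neighborhood, so any of them dominates $v$. If the three neighbors induce $P_3$, then the middle vertex of that $P_3$ is adjacent to the other two neighbors and to $v$, so it dominates $v$. Finally, if $v$ is adjacent to a vertex $u$ of degree $|V(G)|-1$, then $N[u]=V(G)\supseteq N[v]$ trivially.

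In every case, $v$ would be a corner in $V(G)-S$ dominated by a vertex of $S$, contradicting Theorem \ref{Theorem: critical H-exist: not S is independent}(2). There is essentially no obstacle in this proof; the corollary is a bookkeeping of simple local domination patterns organized under the general principle furnished by the preceding theorem. The only point deserving minor care is selecting the right vertex in the $P_3$ case, namely the central one rather than an endpoint, since only the centre is adjacent to all three of $v$ and the other two neighbors simultaneously.
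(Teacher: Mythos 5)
Your proof is correct and is exactly the intended derivation: the paper leaves this corollary without an explicit proof precisely because it follows from Theorem \ref{Theorem: critical H-exist: not S is independent} by exhibiting, in each of the four configurations, a neighbor of $v$ (necessarily in $S$ by independence of $V(G)-S$) whose closed neighborhood contains $N[v]$. Your case analysis, including the care taken to pick the central vertex of the $P_3$, is accurate and complete.
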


Let $G$ be a graph with adjacent vertices $u,v$, and $\{w\} := V(G/uv) - V(G)$.
We define the mapping $f:2^{V(G)} \to 2^{V(G/uv)}$ as follows:
\[
f(S) = 
\begin{cases}
	S & \text{if }u,v \notin S,\\
	(S \cup \{w\}) - \{u,v\} & \text{otherwise.} \\ 
\end{cases}
\]
Let $S$ be a vertex set such that $G[S]$ is isomorphic to a given graph $H$. We call an edge $uv$, $H$-critical for $S$ if $G/uv[f(S)]$ is non-isomorphic to $H$. Furthermore, we call the edge $uv$ $H$-critical in $G$ if for any vertex subset $S$ that induces $H$, $uv$ is $H$-critical for $S$.

\begin{theorem}{\label{Theorem: constructive edge characterization}}
	Let $G$ be a graph and $S \subseteq V(G)$ where $H$ is the graph induced by $S$ in $G$. For any edge $uv \in E(G)$, $uv$ is $H$-critical for $S$ if and only if 
	\begin{enumerate}
		\item $	u,v \in S$ or
		\item $u \in V(G) - S$, $v \in S$, and $u$ is not a corner dominated by $v$ in the subgraph $G[S\cup \{u\}]$.%$N_{S}(v)- \{u\} \not\subseteq N_{S}(u)$.
	\end{enumerate}
\end{theorem}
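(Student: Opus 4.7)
The plan is to split into three cases according to $|\{u,v\}\cap S|$ and verify in each which clauses of the theorem apply and whether $G/uv[f(S)]\cong H$. The two extreme cases will be handled purely by cardinality. If $u,v\notin S$ then $f(S)=S$ and the contraction touches no edge with both endpoints in $S$, so $G/uv[f(S)]=G[S]=H$ and $uv$ is not $H$-critical for $S$; this agrees with the theorem, since neither clause~(1) nor clause~(2) is satisfied. If $u,v\in S$ then $f(S)=(S\cup\{w\})-\{u,v\}$ has one fewer vertex than $S$, so $G/uv[f(S)]$ cannot be isomorphic to $H$ for trivial reasons, giving $H$-criticality, which matches clause~(1).

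The main case is $|\{u,v\}\cap S|=1$, and by symmetry I may take $v\in S$ and $u\in V(G)-S$. Here $f(S)=(S-\{v\})\cup\{w\}$ has the same cardinality as $S$. Because contraction affects only the two endpoints $u,v$, the induced subgraphs agree off the distinguished vertex, i.e.\ $G/uv[f(S)-\{w\}]=G[S-\{v\}]$. Consequently $G/uv[f(S)]$ is obtained from $H$ by replacing $v$ with a new vertex $w$ whose neighbours in $S-\{v\}$ are exactly $N_{S-\{v\}}(u)\cup N_{S-\{v\}}(v)$, whereas $v$'s neighbours in $H$ are just $N_{S-\{v\}}(v)$. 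A short unpacking of $N[u]\subseteq N[v]$ inside $G[S\cup\{u\}]$, using that $uv\in E(G)$, shows that $u$ is corner dominated by $v$ there if and only if $N_{S-\{v\}}(u)\subseteq N_{S-\{v\}}(v)$.

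The remaining step is an edge count. If $u$ is corner dominated by $v$, the two neighbourhoods above coincide, so the bijection sending $w$ to $v$ and fixing $S-\{v\}$ is an isomorphism $G/uv[f(S)]\to H$, hence $uv$ is not $H$-critical. Conversely, if $u$ is not corner dominated by $v$, then $N_{S-\{v\}}(u)$ contains some vertex outside $N_{S-\{v\}}(v)$, so $|N_{S-\{v\}}(u)\cup N_{S-\{v\}}(v)|>|N_{S-\{v\}}(v)|$; since the two graphs agree on $S-\{v\}$, $G/uv[f(S)]$ has strictly more edges than $H$ and therefore cannot be isomorphic to it, giving $H$-criticality.

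The point at which I expect the main difficulty is the ``only if'' direction of this last case: a priori one might worry about non-canonical isomorphisms $G/uv[f(S)]\cong H$ arising from automorphisms of $H-v$ that send $v$ elsewhere. Reducing to an edge count sidesteps this entirely, since any graph isomorphism preserves the number of edges; this is the structural observation that keeps the proof short and avoids reasoning about $\mathrm{Aut}(H)$.
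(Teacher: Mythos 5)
Your proof is correct and follows essentially the same route as the paper's: the case $u,v\in S$ is dispatched by a cardinality count, the non-dominated mixed case by observing that the contracted vertex gains a neighbour in $f(S)$ so the edge count strictly increases, and the two remaining cases by exhibiting the identity-type isomorphism. Your explicit unpacking of the corner-domination condition and your remark that the edge count sidesteps worries about non-canonical isomorphisms are just slightly more detailed versions of what the paper does implicitly.
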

\begin{proof}
	\begin{enumerate}
		\item If $u,v \in S$, then $|f(S)| < |S|$. Thus, $G/uv[f(S)]$ is non-isomorphic to $H$.
		\item Let $u \in V(G) - S$, $v \in S$, and $u$ is not a corner dominated by $v$ in the subgraph $G[S\cup \{u\}]$. Additionally, let $w \in N_{S}(u)$ but $w \notin N_{S}(v)$. In $G/uv$, let $x := V(G/uv) - V(G)$. Clearly, $x$ is adjacent to any vertex in $N_{S}(v) \cup \{w\}$. Hence, the size of $G/uv[f(S)]$ is larger than that of $G[S]$. Thus, $G/uv[f(S)]$ is non-isomorphic to $H$.
	\end{enumerate}
	
	Conversely, if none of the conditions in the theorem hold, then one of the following holds:
	\begin{enumerate}
		\item both $u$ and $v$ are not in $S$, or
		\item $u \in V(G) - S$, $v \in S$, and $u$ is a corner dominated by $v$ in the subgraph $G[S\cup \{u\}]$.
	\end{enumerate}
	In both cases, $G[S] \cong G/uv[f(S)] \cong H$. Consequently, $uv$ is not $H$-critical for $S$.
\end{proof}

\section{Special graphs}{\label{Section: application}}
\begin{proposition}{\label{Lemma: Cycle contraction}}
	If a graph $G$ is $C_{n}$-exist, where $n \geq 4$, then there is a $G$-contraction that is $C_{n-1}$-exist. 
\end{proposition}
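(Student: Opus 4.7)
The plan is to simply contract an edge of the induced $C_n$ itself and verify that the resulting cycle of length $n-1$ remains induced. Since $G$ is $C_n$-exist, fix an induced $C_n$ on a vertex set $S = \{v_1, v_2, \ldots, v_n\}$ with edges $v_iv_{i+1}$ (indices mod $n$). My candidate contraction is $e = v_1 v_2$; let $w$ denote the new vertex in $G/e$, and consider the set $f(S) = \{w, v_3, v_4, \ldots, v_n\}$, which has size $n-1$.

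To show $G/e$ is $C_{n-1}$-exist, I would verify that $G/e[f(S)] \cong C_{n-1}$. The adjacencies among $v_3, \ldots, v_n$ are inherited from $G$, and because $G[S] \cong C_n$ is induced, these form exactly the path $v_3 v_4 \cdots v_n$ with no chords. It remains to determine the neighbors of $w$ inside $f(S)$. By the definition of contraction, $N_{G/e}(w) \cap \{v_3,\ldots,v_n\} = (N_G(v_1) \cup N_G(v_2)) \cap \{v_3,\ldots,v_n\}$. Using that $G[S]$ is an induced cycle with $n \geq 4$, we have $N_G(v_1) \cap S = \{v_n, v_2\}$ and $N_G(v_2) \cap S = \{v_1, v_3\}$, so the intersection with $\{v_3,\ldots,v_n\}$ is exactly $\{v_3, v_n\}$. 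Hence $w$ is adjacent in $G/e[f(S)]$ to precisely $v_3$ and $v_n$, which closes the path $v_3 v_4 \cdots v_n$ into the cycle $w v_3 v_4 \cdots v_n w$ of length $n-1$, with no additional chords.

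The hypothesis $n \geq 4$ is used in two places: it ensures that $v_3 \neq v_n$ (so $w$ has two distinct cycle-neighbors in $f(S)$), and it guarantees the non-adjacencies $v_1 v_3, v_2 v_n \notin E(G)$ that keep the induced cycle free of chords coming through $w$. I do not foresee a serious obstacle; the only step that requires a small amount of care is the verification that no unwanted edge $w v_i$ for $4 \leq i \leq n-1$ appears in $G/e[f(S)]$, and this follows immediately from the fact that the original $C_n$ is induced in $G$, which forbids $v_1$ or $v_2$ from being adjacent to any such $v_i$.
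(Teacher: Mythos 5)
Your proof is correct, and it uses the natural argument the paper has in mind: the paper states this proposition without proof (treating it as apparent), and contracting one edge of the induced $C_{n}$ and checking that $f(S)$ induces $C_{n-1}$ is exactly the intended verification. Your computation of the neighbours of the contracted vertex within $f(S)$ and the chord-freeness check are both accurate, including the boundary case $n=4$.
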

\begin{proposition}{\label{Lemma: EC(C3)}}
	The only critical $C_{3}$-exist graph is $C_{3}$.
\end{proposition}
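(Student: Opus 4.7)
The plan is to exploit Theorem \ref{Theorem: critical H-exist: not S is independent} and Corollary \ref{coro: no vertex adajcet to 1 or 2 or 3 or max degree in critical graph} with $\mathcal{H} = \{C_3\}$ to show that a critical $C_3$-exist graph cannot contain any vertex outside a chosen induced triangle. First I would observe that $C_3$ itself is critically $C_3$-exist, since it contains an induced $C_3$ and contracting any of its edges produces $K_2$, which is $C_3$-free; so one direction is trivial.

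For the converse, let $G$ be a critically $C_3$-exist graph. By definition, $G$ contains an induced triangle, say on $S = \{a,b,c\}$. Since the paper excludes graphs with isolated vertices, it suffices to show $V(G) = S$. By Theorem \ref{Theorem: critical H-exist: not S is independent}, the set $V(G) - S$ is independent, which means every neighbor of any vertex $u \in V(G) - S$ lies inside $S$.

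Next I would rule out every possible nonzero value for $|N_S(u)|$ by applying Corollary \ref{coro: no vertex adajcet to 1 or 2 or 3 or max degree in critical graph} with this $S$. Since $G[S] \cong C_3$, any single vertex of $S$ is forbidden as the exact neighborhood of $u$ (the ``exactly one vertex'' clause), any pair of vertices from $S$ induces $K_2$ and is therefore forbidden (``two adjacent vertices''), and the full set $S$ itself induces $C_3$ and is forbidden (``three vertices inducing $P_3$ or $C_3$''). Hence $N_S(u) = \emptyset$, so $u$ is isolated in $G$, contradicting our blanket assumption. Therefore $V(G) - S = \emptyset$ and $G \cong C_3$.

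The only potential obstacle is ensuring that every nonempty subset of $S$ is indeed caught by one of the forbidden patterns in the corollary; but because $|S|=3$ and $G[S]$ is the complete graph on three vertices, the enumeration is exhaustive and immediate. No separate case analysis or additional machinery beyond the already established corollary is needed.
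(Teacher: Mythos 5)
Your proof is correct and uses exactly the machinery the paper intends for this statement (Theorem \ref{Theorem: critical H-exist: not S is independent} and Corollary \ref{coro: no vertex adajcet to 1 or 2 or 3 or max degree in critical graph}); the paper itself states Proposition \ref{Lemma: EC(C3)} without proof, and your argument fills that gap in the expected way. The case enumeration on $|N_S(u)|\in\{1,2,3\}$ is exhaustive because $S$ induces a complete graph on three vertices, so nothing is missing.
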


\section{Claw-Free Graphs}
There are several graph families that are subfamilies of claw-free graphs, for instance, line graphs and complements of triangle-free graphs. For more graph families and results about claw-free graphs, please consult \cite{faudree1997claw}. Additionally, for more structural results about claw-free graphs, please consult \cite{chudnovsky2008claw,chudnovsky2005structure}. In the following, we call the graph $H_{5}$ in Figure \ref{Figure: the graphs in CC(claw)} bull.

\begin{proposition}{\label{Proposition: CC(claw)}}
	The graphs in Figure \ref{Figure: the graphs in CC(claw)} are the only claw-split graphs.
\end{proposition}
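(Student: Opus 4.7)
The plan is to reduce the claim to an enumeration via Theorem \ref{Theorem: CC=H[]}, which asserts that $G$ is claw-split if and only if $G \in splitting(K_{1,3})$. So it suffices to list, up to isomorphism, every graph obtainable by a splitting of the claw and match the list against the figure.

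First I would invoke the orbit recipe stated immediately after Theorem \ref{Theorem: CC=H[]}. The automorphism group $Aut(K_{1,3}) \cong S_{3}$ has precisely two orbits on $V(K_{1,3})$, namely the centre $c$ and the three-element orbit of leaves, so it is enough to enumerate $splitting(K_{1,3}, c)$ and $splitting(K_{1,3}, a)$ for a single leaf $a$. For the leaf $a$ with $N(a) = \{c\}$, only two cases survive after identifying $u \leftrightarrow w$: $(U,W) = (\{c\}, \emptyset)$, giving a tree of degree sequence $(3,2,1,1,1)$, and $(U,W) = (\{c\}, \{c\})$, giving a triangle with two pendant edges at a single vertex. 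For the centre $c$ with $N(c) = \{a_{1}, a_{2}, a_{3}\}$, every $a_{i}$ lies in exactly one of $U \setminus W$, $W \setminus U$, or $U \cap W$; parameterising by the triple $(|U \cap W|, |U \setminus W|, |W \setminus U|)$ with sum $3$ and quotienting by the $S_{3}$-action on the leaves and by the swap $U \leftrightarrow W$, exactly six cases remain, each producing a 5-vertex graph to be drawn and matched against the figure.

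The step I expect to be the main source of error is allowing the overlap $U \cap W \neq \emptyset$: this option corresponds to a common neighbour of the contracted pair in the pre-contraction graph, and omitting it would lose several of the target graphs, including the bull. A final bookkeeping check is that the two leaf splittings are already isomorphic to two of the six centre splittings, so the list contains no duplicates and no omissions, and therefore matches Figure \ref{Figure: the graphs in CC(claw)} exactly.
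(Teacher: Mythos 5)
Your proposal is correct and follows exactly the route the paper intends: Proposition~\ref{Proposition: CC(claw)} is stated without proof because it is the routine enumeration of $splitting(K_{1,3})$ via Theorem~\ref{Theorem: CC=H[]} and the orbit procedure described after it, which is precisely what you carry out. Your case analysis is complete and the identification of the six resulting graphs (including the overlap cases with $U \cap W \neq \emptyset$, which indeed produce the bull and the graphs $H_{1}$, $H_{2}$, $H_{6}$) matches Figure~\ref{Figure: the graphs in CC(claw)}.
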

\begin{corollary}{\label{Corollary: FCC(claw)}}
	Bull is the only claw-free-split graph.
\end{corollary}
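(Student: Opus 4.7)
The plan is to deduce the corollary directly from Proposition \ref{Proposition: CC(claw)}, which lists the finitely many claw-split graphs in Figure \ref{Figure: the graphs in CC(claw)}. Since a claw-free-split graph is by definition a claw-split graph that is also claw-free, the statement amounts to inspecting this finite list and verifying that the bull $H_5$ is the unique claw-free member.

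First, I would verify that the bull is claw-free. Writing its triangle as $\{a,b,c\}$ with pendants $d$ at $a$ and $e$ at $b$, the only vertices of degree at least three are $a$ and $b$, and each of the neighborhoods $N(a)=\{b,c,d\}$ and $N(b)=\{a,c,e\}$ contains the adjacent pair on the triangle. Neither supports an independent triple, so $H_5$ has no induced $K_{1,3}$. For each of the remaining graphs in Figure \ref{Figure: the graphs in CC(claw)}, I would exhibit an induced claw. A clean way to organise this is via Theorem \ref{Theorem: CC=H[]}: every claw-split graph is a $splitting(K_{1,3}, v, U, W)$, and Propositions \ref{pro: split graphs that are not free with degree one} and \ref{pro: split graphs that are not free} immediately rule out all cases where $v$ is a leaf of the claw or where $U$ or $W$ equals $N(v)$; such a graph is $K_{1,3}$-split but not claw-free-split, and hence contains an induced claw. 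After factoring out the automorphism interchanging the two new vertices $u$ and $w$, the only two genuine cases of splitting at the center $c$ with $N(c)=\{\ell_1,\ell_2,\ell_3\}$ are $(U,W)=(\{\ell_1\},\{\ell_2,\ell_3\})$, which produces a ``fork'' whose degree-3 vertex has the independent triple $\{u,\ell_2,\ell_3\}$ as neighborhood and hence induces a claw, and $(U,W)=(\{\ell_1,\ell_2\},\{\ell_1,\ell_3\})$, which produces the bull.

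The only real obstacle is the bookkeeping of splittings of $K_{1,3}$. Fortunately, the automorphism group of $K_{1,3}$ collapses the vertex orbits to just two, and the two propositions above prune almost every splitting at the center, so the verification reduces to the single observation that the fork contains an induced claw while the bull does not.
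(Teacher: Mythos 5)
Your proposal is correct and matches the paper's (implicit) argument: the corollary is obtained by inspecting the finite list of claw-split graphs from Proposition \ref{Proposition: CC(claw)} and checking that the bull is the unique claw-free member, and your use of Theorem \ref{Theorem: CC=H[]} together with Propositions \ref{pro: split graphs that are not free} and \ref{pro: split graphs that are not free with degree one} to prune the splittings of $K_{1,3}$ is exactly the machinery the paper sets up for this purpose. The case analysis reducing to the fork (which contains an induced claw) and the bull (which does not) is complete and correct.
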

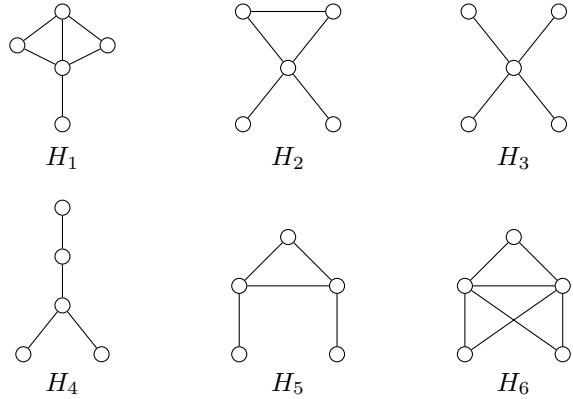
\begin{figure}[ht!]
	\centering
	\begin{tikzpicture}[hhh/.style={draw=black,circle,inner sep=2pt,minimum size=0.2cm}]
		\begin{scope}[shift={(-3,0)},scale=1.5]
			\node 	   (h) at (0,0) 	{$H_{1}$};
			\node[hhh] (e) at (0,0.3) 	{};
			\node[hhh] (a) at (0,0.8) 	{};
			\node[hhh] (b) at (0.4,1) 	{};
			\node[hhh] (c) at (0,1.3) 	{};
			\node[hhh] (d) at (-0.4,1) 	{};
			
			\draw (c) -- (a) -- (b)-- (c) -- (d) -- (a) --(e) ;
		\end{scope}
		
		\begin{scope}[shift={(0,0)},scale=1.5]
			\node 	(h) at (0,0) 	{$H_{2}$};
			\node[hhh] 	(e) at (-0.4,0.3) 	{};
			\node[hhh] 	(d) at (0.4,0.3) 	{};
			\node[hhh] 	(a) at (0,0.8) 	{};
			\node[hhh]  (b) at (-0.4,1.3) 	{};
			\node[hhh] 	(c) at (0.4,1.3) 	{};
			
			\draw (a) -- (c) -- (b)-- (a) -- (d)  (e) --(a);
		\end{scope}
		
		\begin{scope}[shift={(3,0)},scale=1.5]
			\node 	(h) at (0,0) 	{$H_{3}$};
			\node[hhh] (d) at (-0.4,0.3) 	{};
			\node[hhh] (e) at (0.4,0.3) 	{};
			\node[hhh] 	(a) at (0,0.8) 	{};
			\node[hhh] (b) at (-0.4,1.3) 	{};
			\node[hhh] (c) at (0.4,1.3) 	{};
			
			\draw (c) -- (a) -- (b)  (d) -- (a) --(e);
		\end{scope}
		
		\begin{scope}[shift={(-3,-3)},scale=1.3]
			\node 	(h) at (0,0) 	{$H_{4}$};
			\node[hhh] 	(a) at (0,0.8) 	{};
			\node[hhh]  (b) at (0,1.3) 	{};
			\node[hhh] 	(c) at (0,1.8) 	{};
			\node[hhh] 	(d) at (-0.4,0.3) 	{};
			\node[hhh] 	(e) at (0.4,0.3) 	{};
			
			\draw (c) -- (b)-- (a) -- (d)  (e) --(a);
		\end{scope}
		
		\begin{scope}[shift={(0,-3)},scale=1.3]
			\node 	(h) at (0,0) 	 {$H_{5}$};
			\node[hhh] 	(a) at (0,1.5) 		{};
			\node[hhh]  (b) at (-0.5,1) 	{};
			\node[hhh] 	(c) at (-0.5,0.3) 	{};
			\node[hhh] 	(d) at (0.5,1) 		{};
			\node[hhh] 	(e) at (0.5,0.3) 	{};
			
			\draw (a) -- (b) --(d) --(a)   (c) -- (b) (d)--(e);
		\end{scope}
		
		\begin{scope}[shift={(3,-3)},scale=1.3]
			\node 	(h) at (0,0) 	{$H_{6}$};
			\node[hhh] 	(a) at (0,1.5) 	{};
			\node[hhh]  (b) at (-0.5,1) 	{};
			\node[hhh] 	(c) at (-0.5,0.3) 	{};
			\node[hhh] 	(d) at (0.5,1) 	{};
			\node[hhh] 	(e) at (0.5,0.3) 	{};
			
			\draw (a) -- (b) --(d) --(a)  (d) -- (c) -- (b) (d) -- (e) -- (b);
		\end{scope}
	\end{tikzpicture}	
	\caption{Claw-split graphs}
	\label{Figure: the graphs in CC(claw)}
\end{figure}

\begin{proposition}{\label{Proposition: EC(claw)}}
	The graphs in Figure \ref{Figure: the graphs in EC(claw)} are the only critically claw-exist graphs.
\end{proposition}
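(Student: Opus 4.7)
The plan is to follow the same template used for Proposition~\ref{Proposition: CC(claw)}: fix an induced claw inside a hypothetical critically claw-exist graph $G$ and use the structural constraints from Theorem~\ref{Theorem: critical H-exist: not S is independent} and Corollary~\ref{coro: no vertex adajcet to 1 or 2 or 3 or max degree in critical graph} to enumerate the possible extensions of this claw.

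First I would fix an induced claw on $S=\{c,a_1,a_2,a_3\}$, with $c$ the centre. Theorem~\ref{Theorem: critical H-exist: not S is independent} gives that $V(G)\setminus S$ is independent and that no external vertex is corner dominated by any element of $S$, so for every $x\in V(G)\setminus S$ we have $N_G(x)=N_S(x)$. Corollary~\ref{coro: no vertex adajcet to 1 or 2 or 3 or max degree in critical graph} then excludes the singleton, the adjacent pair $\{c,a_i\}$, and the triple $\{c,a_i,a_j\}$ (which induces $P_3$); listing the remaining subsets of $S$ reduces the admissible neighbourhoods to
\[
N_S(x)\in\bigl\{\{a_i,a_j\}:1\le i<j\le 3\bigr\}\cup\bigl\{\{a_1,a_2,a_3\}\bigr\}\cup\bigl\{\{c,a_1,a_2,a_3\}\bigr\}.
\]
A short argument then rules out the last option: an $x$ adjacent to the whole of $S$ would, together with $\{a_1,a_2,a_3\}$, induce a second claw $S'=\{x,a_1,a_2,a_3\}$, and applying Theorem~\ref{Theorem: critical H-exist: not S is independent} to $S'$ forces $c\in V(G)\setminus S'$ to satisfy $N[c]\subseteq N[x]$, contradicting the no-corner condition (2) in that theorem. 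Hence every external vertex is adjacent either to a $2$-subset or to the whole of $\{a_1,a_2,a_3\}$, and in particular $c$ has no external neighbour.

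At this point $G$ sits inside a very restricted family: the vertex set is $\{c,a_1,a_2,a_3\}\cup T$ with $T$ independent, and each $x\in T$ corresponds to a $2$- or $3$-element subset $N_S(x)\subseteq\{a_1,a_2,a_3\}$. To finish the argument I would carry out a finite case analysis on the multiset of these subsets. For each candidate $G$ I would use Theorem~\ref{Theorem: constructive edge characterization} to determine, for every edge $uv$ and every induced claw in $G$, whether $uv$ is claw-critical for that claw; in parallel I would verify that the contracted graph $G/uv$ creates no new induced claw. Candidates in which some edge fails to be critical for some claw, or some contraction introduces a new claw, are discarded. What remains is exactly the list displayed in Figure~\ref{Figure: the graphs in EC(claw)}, and each graph shown there is then checked in the opposite direction to satisfy both conditions.

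The delicate step is this final case analysis. Beyond the original claw, secondary claws of the form $\{a_i,c,x,y\}$ (centred at the leaf $a_i$ with three pairwise non-adjacent neighbours $c,x,y$) appear whenever two external vertices share $a_i$ in their neighbourhoods, and Theorem~\ref{Theorem: critical H-exist: not S is independent} must be re-applied to each such secondary claw; this is what eliminates the configurations with too much overlap between the $N_S(x)$. Exploiting the $S_3$-symmetry permuting $\{a_1,a_2,a_3\}$ cuts the number of essentially distinct configurations down to a small handful, which is then matched against Figure~\ref{Figure: the graphs in EC(claw)}.
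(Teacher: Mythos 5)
Your overall strategy is the same as the paper's: fix an induced claw $S$, use Theorem~\ref{Theorem: critical H-exist: not S is independent} and Corollary~\ref{coro: no vertex adajcet to 1 or 2 or 3 or max degree in critical graph} to restrict every external vertex to a neighbourhood contained in the leaf set of size $2$ or $3$, and then enumerate. Two points need repair, though. First, your exclusion of an external $x$ with $N(x)=S$ is written the wrong way round: you claim $N[c]\subseteq N[x]$, but this holds only if $x$ is the \emph{unique} external neighbour of the centre $c$, which you cannot assume at that stage (with a second such vertex $y$ one has $y\in N[c]\setminus N[x]$ and your contradiction evaporates). The correct and more economical containment is $N[x]\subseteq N[c]$, which always holds because $N(x)\subseteq S\subseteq N[c]$; then $x$ is a corner in $V(G)-S$ dominated by $c\in S$, contradicting condition (2) applied to the \emph{original} claw $S$. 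This single observation also disposes of every neighbourhood containing $c$, i.e.\ it is what justifies ``no external vertex is adjacent to the centre.''

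Second, you announce ``a finite case analysis on the multiset of these subsets,'' but finiteness is exactly what has to be proved: a priori there could be arbitrarily many external vertices (e.g.\ $K_{n,3}$ for every $n$ survives all the constraints you have established up to that point). The secondary claws you mention do supply the missing bound, but you should state it: any two external vertices $x,y$ have neighbourhoods of size at least $2$ inside the three leaves, hence share a leaf $a_i$ by pigeonhole, so $\{a_i,c,x,y\}$ is an induced claw; condition (1) of Theorem~\ref{Theorem: critical H-exist: not S is independent} applied to it forbids any further external vertex $z$, since $z$ is adjacent to some leaf $a_j\neq a_i$ and both lie outside $\{a_i,c,x,y\}$. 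This caps the number of external vertices at two, after which the enumeration really is a handful of cases (the paper instead derives the bounds by exhibiting explicit non-critical contractions, e.g.\ ruling out two degree-$2$ externals with equal neighbourhoods by contracting the edge from the centre to the unused leaf). Your final check via Theorem~\ref{Theorem: constructive edge characterization} over \emph{all} induced claws does catch that last configuration, so with the bound in place your plan closes and yields exactly Figure~\ref{Figure: the graphs in EC(claw)}.
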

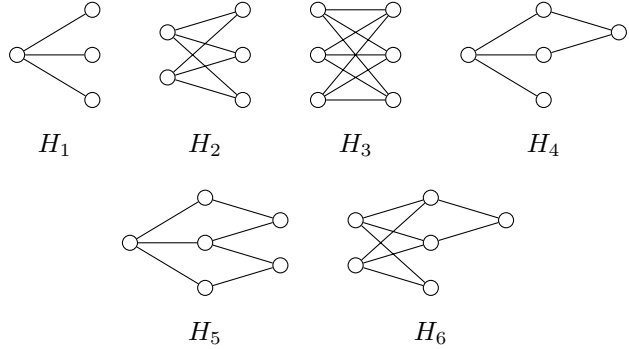
\begin{figure}[ht!]
	\centering
	\begin{tikzpicture}[hhh/.style={draw=black,circle,inner sep=2pt,minimum size=0.2cm}]
		\begin{scope}[shift={(-3.5,0)},scale=2]
			\node 	   (a) at (-0.25,0) 		{$H_{1}$};
			\node[hhh] (b) at (-0.5,0.6)	{};
			\node[hhh] (c1) at (0,0.3) 		{};
			\node[hhh] (c2) at (0,0.6) 		{};
			\node[hhh] (c3) at (0,0.9) 		{};
			
			\draw (c1) -- (b) (c2) -- (b)  (c3) -- (b);
		\end{scope}
		
		\begin{scope}[shift={(-1.5,0)},scale=2]
			\node 	   (a) at (-0.25,0) 		{$H_{2}$};
			\node[hhh] (b1) at (-0.5,0.75)	{};
			\node[hhh] (b2) at (-0.5,0.45)	{};
			\node[hhh] (c1) at (0,0.3) 		{};
			\node[hhh] (c2) at (0,0.6) 		{};
			\node[hhh] (c3) at (0,0.9) 		{};
			
			\draw (c1) -- (b1) (c2) -- (b1)  (c3) -- (b1)
			(c1) -- (b2) (c2) -- (b2)  (c3) -- (b2);
		\end{scope}
		
		\begin{scope}[shift={(0.5,0)},scale=2]
			\node 	   (a) at (-0.25,0) 		{$H_{3}$};
			\node[hhh] (b1) at (-0.5,0.3)	{};
			\node[hhh] (b2) at (-0.5,0.6)	{};
			\node[hhh] (b3) at (-0.5,0.9)	{};
			\node[hhh] (c1) at (0,0.3) 		{};
			\node[hhh] (c2) at (0,0.6) 		{};
			\node[hhh] (c3) at (0,0.9) 		{};
			
			\draw (c1) -- (b1) (c2) -- (b1)  (c3) -- (b1)
			(c1) -- (b2) (c2) -- (b2)  (c3) -- (b2)
			(c1) -- (b3) (c2) -- (b3)  (c3) -- (b3);
		\end{scope}
		
		\begin{scope}[shift={(2.5,0)},scale=2]
			\node 	   (a) at (0,0) 		{$H_{4}$};
			\node[hhh] (b) at (-0.5,0.6)	{};
			\node[hhh] (c1) at (0,0.3) 		{};
			\node[hhh] (c2) at (0,0.6) 		{};
			\node[hhh] (c3) at (0,0.9) 		{};
			\node[hhh] (d) at (0.5,0.75) 	{};
			
			\draw (c1) -- (b) (c2) -- (b)  (c3) -- (b) (c2)--(d)--(c3);
		\end{scope}
		
		\begin{scope}[shift={(-2,-2.5)},scale=2]
			\node 	   (a) at (0,0) 		{$H_{5}$};
			\node[hhh] (b) at (-0.5,0.6)	{};
			\node[hhh] (c1) at (0,0.3) 		{};
			\node[hhh] (c2) at (0,0.6) 		{};
			\node[hhh] (c3) at (0,0.9) 		{};
			\node[hhh] (d1) at (0.5,0.75) 	{};
			\node[hhh] (d2) at (0.5,0.45) 	{};
			
			\draw (c1) -- (b) (c2) -- (b)  (c3) -- (b) 
			(c2)--(d1)--(c3) (c1)--(d2)--(c2);
		\end{scope}
		
		\begin{scope}[shift={(1,-2.5)},scale=2]
			\node 	   (a) at (0,0) 		{$H_{6}$};
			\node[hhh] (b1) at (-0.5,0.75)	{};
			\node[hhh] (b2) at (-0.5,0.45) 	{};
			\node[hhh] (c1) at (0,0.3) 		{};
			\node[hhh] (c2) at (0,0.6) 		{};
			\node[hhh] (c3) at (0,0.9) 		{};
			\node[hhh] (d) at (0.5,0.75) 	{};

			\draw (c1) -- (b1) (c2) -- (b1)  (c3) -- (b1) 
			(c1) -- (b2) (c2) -- (b2)  (c3) -- (b2) 
			(c2)--(d)--(c3);
		\end{scope}
	\end{tikzpicture}	
	\caption{Critically claw-exist graphs}
	\label{Figure: the graphs in EC(claw)}
\end{figure}

\begin{proof}
	\renewcommand{\qedsymbol}{\claimqed}
	Through this proof, we assume that $G$ is a critically claw-exist graph with $S := \{r,s,t,u\}$, where $G[S]$ is isomorphic to a claw and $u$ is its center. By Theorem \ref{Theorem: critical H-exist: not S is independent}, $V(G) - S$ is independent. Thus, any vertex in $V(G) - S$ is adjacent to vertices only in $S$. By Corollary \ref{coro: no vertex adajcet to 1 or 2 or 3 or max degree in critical graph}, if $v \in V(G) - S$, then neither $|N(v)| = 1$ nor $v$ is adjacent to $u$.
	
	Let $v,w \in V(G) - S$ such that $N(v) = N(w)$ where $|N(v)|=2$.
	W.l.o.g., assume that $N(v) =\{r,s\}$, however, in $G/tu$, $f(\{r,u,v,w\})$ induces a claw, which contradicts the fact that $G$ is a critically claw-exist. Thus, if $v,w \in V(G) - S$ such that $|N(v)| = |N(w)|=2$, then $N(v) \not= N(w)$.
	
	Let $v,w,x \in V(G) - S$ such that $|N(v)| = |N(w)| = 2$.
	No two vertices of $v,w$, and $x$ (if $|N(x)| =2$) are adjacent to the same vertices in $S$. W.l.o.g., assume that $N(v) =\{r,s\}$, $N(w) =\{r,t\}$, and $ \{s,t\} \subseteq N(x)$. In $G/tx$, $f(\{r,u,v,w\})$ induces a claw, which contradicts the fact that $G$ is a critically claw-exist. Thus, if $v,w \in V(G) - S$ such that $|N(v)| = |N(w)|=2$, then $G$ is isomorphic to $H_{5}$.

	Let $v,w,x \in V(G) - S$ such that $|N(v)| = |N(w)| = 3$. 
	W.l.o.g., assume that $s$ is adjacent to $x$. In $G/sx$, $f(\{r,u,v,w\})$ induces a claw, which contradicts the fact that $G$ is a critically claw-exist. Thus, if $v,w \in V(G) - S$ such that $N(v) = N(w)$ and $|N(v)|=3$, then $G$ is isomorphic to $H_{3}$.

	Consequently, the possible critically claw-exist graphs are those presented in Figure \ref{Figure: the graphs in EC(claw)}. To complete the proof, we have to show that all these graphs are critically claw-exist, which is straightforward in each case.
	\renewcommand{\qedsymbol}{$\square$}
\end{proof}

By Theorem \ref{Theorem: characeterization}, Corollary \ref{Corollary: FCC(claw)}, and Proposition \ref{Proposition: EC(claw)}, we obtain the following result.
\begin{theorem}{\label{Theorem: claw-free characterization}}
	Let $G$ be a bull-free graph that is non-isomorphic to any graph in Figure \ref{Figure: the graphs in EC(claw)}. The graph $G$ is claw-free if and only if any $G$-contraction is claw-free.
\end{theorem}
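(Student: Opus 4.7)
The plan is to apply Theorem \ref{Theorem: characeterization} to the singleton family $\mathcal{H} = \{\text{claw}\}$ and to verify that its two hypotheses translate exactly into the two hypotheses stated here. So the whole argument becomes a short, two-step unpacking of definitions together with quotations of the two preceding results about the claw.

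First I would handle the $\fs$-freeness hypothesis. By Corollary \ref{Corollary: FCC(claw)}, the bull is the only claw-free-split graph, so $\fs(\{\text{claw}\}) = \{\text{bull}\}$. Therefore being bull-free and being $\fs(\{\text{claw}\})$-free are literally the same condition on $G$, which supplies the first hypothesis of Theorem \ref{Theorem: characeterization}.

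Next I would handle the critical-exist hypothesis. By Proposition \ref{Proposition: EC(claw)}, the critically claw-exist graphs are exactly the six graphs $H_{1},\dots,H_{6}$ in Figure \ref{Figure: the graphs in EC(claw)}. Hence the assumption that $G$ is non-isomorphic to any of these graphs is precisely the statement that $G$ is not critically claw-exist, giving the second hypothesis of Theorem \ref{Theorem: characeterization}.

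With both hypotheses now in place, Theorem \ref{Theorem: characeterization} yields the biconditional ``$G$ is claw-free if and only if every $G$-contraction is claw-free'' without further work. The theorem is genuinely a one-line assembly; the real mathematical difficulty has already been absorbed into the case analyses underlying Corollary \ref{Corollary: FCC(claw)} and Proposition \ref{Proposition: EC(claw)}. The only thing to watch is bookkeeping — making sure the translation through $\fs$ and ``critically exist'' is stated in precisely the form demanded by the hypotheses of Theorem \ref{Theorem: characeterization}.
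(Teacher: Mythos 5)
Your proposal matches the paper's own derivation exactly: the theorem is stated there as an immediate consequence of Theorem \ref{Theorem: characeterization}, Corollary \ref{Corollary: FCC(claw)}, and Proposition \ref{Proposition: EC(claw)}, which is precisely the two-step unpacking you describe. The argument is correct and complete.
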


\section{The \texorpdfstring{$2K_{2}$}{2K2}-Free Graphs}
Different graphs families are $2K_{2}$-free graphs; for instance split, pseudo-split, threshold, and co-chordal graphs. Various graph invariants were studied for $2K_{2}$-free graphs, please consult \cite{golan2016nonempty,2k2Free,el1985existence,brause2019chromatic,broersma2014toughness}. The class of $2K_{2}$-free graphs has been characterized in different ways, see \cite{meister2006two,on2k2graphs}. We call the graph $H_{5}$ in Figure \ref{Figure: the graphs in CC(claw)} Bull.

We call an edge $uv$ in a graph $G$ \emph{almost-dominating} if $V(G) - N[\{u,v\}]$ induces edgeless graph.
\begin{proposition}{\label{Proposition: almost-dominating and 2k2-free}}
	A graph $G$ is $2K_{2}$-free if and only if any edge in $E(G)$ is almost-dominating.
\end{proposition}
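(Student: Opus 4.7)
The plan is to prove both directions by noting that the existence of a $2K_2$ in $G$ and the failure of the almost-dominating property are essentially the same statement, just phrased differently. The bridge is the observation that two adjacent vertices $x,y$ lie in $V(G) - N[\{u,v\}]$ precisely when $x,y \notin \{u,v\}$, $x,y$ are nonadjacent to $u$, and $x,y$ are nonadjacent to $v$, which together with the edge $xy$ and the edge $uv$ is exactly the adjacency pattern of $2K_2$.

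For the forward direction, I would argue by contrapositive: assume some edge $uv \in E(G)$ is not almost-dominating. Then by definition $V(G) - N[\{u,v\}]$ contains an edge $xy$. Since $x,y \notin N[\{u,v\}]$, neither $x$ nor $y$ is equal or adjacent to $u$ or $v$. Hence $G[\{u,v,x,y\}] \cong 2K_2$, so $G$ is not $2K_2$-free.

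For the backward direction, I would again use the contrapositive: assume $G$ contains an induced $2K_2$ on $\{u,v,x,y\}$ with edges $uv$ and $xy$ and no other edges among the four vertices. Then $x$ and $y$ are distinct from $u,v$ and nonadjacent to both $u$ and $v$, so $x,y \in V(G) - N[\{u,v\}]$. Since $xy \in E(G)$, the induced subgraph on $V(G) - N[\{u,v\}]$ is not edgeless, witnessing that the edge $uv$ is not almost-dominating.

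The proof has no real obstacle; it is essentially unpacking definitions. The only subtlety worth stating explicitly is that $N[\{u,v\}] = N(u) \cup N(v) \cup \{u,v\}$, so ``$x \notin N[\{u,v\}]$'' packages together the three conditions $x \neq u,v$, $xu \notin E(G)$, and $xv \notin E(G)$ that are exactly what is needed to complete an induced $2K_2$ together with the edges $uv$ and $xy$.
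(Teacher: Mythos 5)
Your proof is correct and is exactly the definitional unpacking the authors had in mind; the paper omits a proof of this proposition as ``apparent,'' and your two contrapositive directions supply precisely the missing details, including the correct reading of $N[\{u,v\}]$ as $N(u)\cup N(v)\cup\{u,v\}$.
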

\begin{lemma}{\label{lemma: almost-dominating and 2k2-free}}
	Let $G$ be a graph with a unique subset $S \subseteq V(G)$ such that $G[S]$ induces $2K_{2}$. If every edge $e$ in $E(G)$ is $e$ is $2K_{2}$-critical for $S$, then $G$ is a critically $2K_{2}$-exist.  
\end{lemma}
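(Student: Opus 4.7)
\medskip

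\noindent\textbf{Proof plan.} Since $S$ already induces a $2K_2$, the graph $G$ is $2K_2$-exist, so it suffices to show that for every edge $e=uv\in E(G)$ the contraction $G/e$ is $2K_2$-free. The plan is a proof by contradiction: assume some edge $e=uv$ yields a $G/e$ containing a $2K_2$ on a vertex set $T\subseteq V(G/e)$, let $w$ denote the new vertex of $G/e$, and split on whether $w\in T$. In either case I aim to produce a $2K_2$-inducing subset of $V(G)$ and then invoke uniqueness of $S$ together with Theorem~\ref{Theorem: constructive edge characterization}.

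If $w\notin T$, then $T\subseteq V(G)\setminus\{u,v\}$ and $G[T]=G/e[T]$, so $T$ induces $2K_2$ in $G$. Uniqueness of $S$ forces $T=S$, hence $u,v\notin S$. But Theorem~\ref{Theorem: constructive edge characterization} lists the only ways for $e$ to be $H$-critical for $S$, and both require at least one endpoint of $e$ to lie in $S$; this contradicts the hypothesis that every edge is $2K_2$-critical for $S$.

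If $w\in T$, I would write $T=\{w,a,b,c\}$ with $a,b,c\in V(G)\setminus\{u,v\}$ and, using the unique $2K_2$-structure of $G/e[T]$, label things so that the two edges are $wa$ and $bc$. Translating to $G$, this gives $bc\in E(G)$, $a\not\sim b,c$, $\{u,v\}\cap N(b)=\{u,v\}\cap N(c)=\emptyset$, and $a$ is adjacent to at least one of $u,v$. After relabeling we may assume $au\in E(G)$. Then $\{u,a,b,c\}$ induces a $2K_2$ in $G$: the edges $ua$ and $bc$ are present, and no other edge is, because $u\not\sim b,c$ and $a\not\sim b,c$. Uniqueness forces $\{u,a,b,c\}=S$, so $u\in S$ and $v\notin S$ (the symmetric case $av\in E(G)$, $au\notin E(G)$ is handled identically with the roles of $u$ and $v$ swapped).

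Now I would apply Theorem~\ref{Theorem: constructive edge characterization} to $e=uv$ with respect to $S$: since $v\in V(G)-S$ and $u\in S$, criticality demands that $v$ not be a corner dominated by $u$ in $G[S\cup\{v\}]$. The main technical step, and the one I expect to require the most care, is showing that $v$ \emph{is} in fact corner-dominated by $u$ there: in $G[S\cup\{v\}]$ the edges among $S=\{u,a,b,c\}$ are exactly $ua$ and $bc$, and the edges at $v$ are $uv$ together with possibly $va$ (but not $vb$ or $vc$). Hence $N_{G[S\cup\{v\}]}[v]\subseteq\{v,u,a\}=N_{G[S\cup\{v\}]}[u]$, so $v$ is corner-dominated by $u$, contradicting the critical-edge assumption and completing the proof.
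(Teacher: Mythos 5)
Your proof is correct, but it takes a genuinely different route from the paper's. The paper first extracts structural consequences of the hypothesis (edge-criticality of every edge for $S$ forces $V(G)-S$ to be independent and every vertex of $V(G)-S$ to be adjacent to at least two nonadjacent vertices of $S$), and then argues that every edge of an arbitrary contraction is almost-dominating, concluding via Proposition~\ref{Proposition: almost-dominating and 2k2-free} that the contraction is $2K_2$-free. You instead argue by direct pullback: a hypothetical $2K_2$ on a set $T$ in $G/uv$ is transported back to a $2K_2$ in $G$ itself (trivially if the contracted vertex $w\notin T$, and by replacing $w$ with whichever of $u,v$ is adjacent to its $T$-neighbour if $w\in T$), whereupon the uniqueness hypothesis forces this set to equal $S$ and Theorem~\ref{Theorem: constructive edge characterization} shows the edge $uv$ fails to be $2K_2$-critical for $S$ --- either because both endpoints avoid $S$, or because the endpoint outside $S$ is corner-dominated in $G[S\cup\{v\}]$ by the endpoint inside $S$. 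Your version is more self-contained (it does not route through the almost-dominating characterization), makes explicit exactly where the uniqueness of $S$ is needed, and fills in details the paper's terse argument leaves to the reader; the paper's version is shorter and reuses machinery already set up for the section. Both uses of Theorem~\ref{Theorem: constructive edge characterization} in your argument are of its ``only if'' direction, which is indeed what that theorem provides.
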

\begin{proof}
	Let $H$ be a $G$-contraction. Every edge $e$ in $E(G)$ is $2K_{2}$-critical for $S$, then $V(G)-S$ is independent set. Furthermore, every vertex in $V(G)-S$ is adjacent to at least two nonadjacent vertices in $S$. In $H$, let $u \in V(G)-f(S)$ and $v \in f(S)$. If $u,v$ are adjacent, then $uv$ is almost-dominating. Let $u \in f(S)$, then $uv$ is almost-dominating. Hence, every edge in $H$ is almost-dominating. Thus, $G$ is a critically $2K_{2}$-exist. 
\end{proof}

\begin{proposition}{\label{Proposition: CC(2K2)}}
	The graphs $P_{2} \cup C_{3}$ and $P_{2} \cup P_{3}$ are the only $2K_{2}$-split graphs.
\end{proposition}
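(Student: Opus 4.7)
The plan is to apply Theorem~\ref{Theorem: CC=H[]}, which identifies the class of $2K_{2}$-split graphs with $splitting(2K_{2})$; the task therefore reduces to enumerating that set by hand. First I would observe that $Aut(2K_{2})$ acts transitively on $V(2K_{2})$, since swapping the two endpoints within either edge and swapping the two edges already generate a subgroup that reaches every vertex. Hence all four vertices form a single orbit, and by the similarity proposition preceding the construction recipe it suffices to compute $splitting(2K_{2},v)$ for a single vertex $v$.

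Fixing such a $v$, its neighbourhood consists of a single vertex $b$, so $N_{2K_{2}}(v)=\{b\}$. I would then enumerate the admissible pairs $(U,W)$ with $U\cup W=\{b\}$: these are $(\{b\},\emptyset)$, $(\emptyset,\{b\})$, and $(\{b\},\{b\})$. The key subtlety, and the only place where the enumeration could go wrong, is to remember that the definition only demands $U\cup W=N_{H}(v)$ and does not require $U$ and $W$ to be disjoint, so the doubly-covered case $(\{b\},\{b\})$ must be included (it corresponds to contracting an edge of a triangle).

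Finally I would read off each splitting explicitly. The first two cases, which are symmetric under the roles of $u$ and $w$, replace $v$ by the path $w$--$u$--$b$ while leaving the other edge of $2K_{2}$ untouched, producing $P_{2}\cup P_{3}$. The third case gives $N(u)=\{b,w\}$ and $N(w)=\{b,u\}$, so $\{u,w,b\}$ becomes a triangle and, combined with the untouched $K_{2}$, yields $P_{2}\cup C_{3}$. Hence $splitting(2K_{2})=\{P_{2}\cup P_{3},\ P_{2}\cup C_{3}\}$, and Theorem~\ref{Theorem: CC=H[]} closes the argument. No serious obstacle is expected; the whole proof is a small, careful case analysis whose only pitfall is the overlap case that creates the triangle.
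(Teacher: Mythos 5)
Your proposal is correct and follows exactly the route the paper intends: it invokes Theorem~\ref{Theorem: CC=H[]} together with the orbit-based recipe for computing $splitting(H,v)$, notes that $Aut(2K_{2})$ is vertex-transitive so a single degree-one vertex suffices, and correctly includes the non-disjoint case $U=W=\{b\}$ that produces the triangle. The paper leaves this proposition without an explicit proof, and your case analysis supplies precisely the ``apparent'' argument it relies on.
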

\begin{corollary}{\label{Corollary: FCC(2K2)}}
	There is no $2K_{2}$-free-split graph.
\end{corollary}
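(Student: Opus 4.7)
The plan is to combine the definition of $\mathcal{H}$-free-split directly with the classification already provided by Proposition \ref{Proposition: CC(2K2)}. Since a graph is $2K_{2}$-free-split only if it is simultaneously $2K_{2}$-split and $2K_{2}$-free, and since the only candidates for the first condition are $P_{2} \cup C_{3}$ and $P_{2} \cup P_{3}$, the whole task reduces to checking that neither of these two graphs is $2K_{2}$-free.

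First I would fix the vertex labels: write $P_{2} \cup P_{3}$ with $P_{2}$ on vertices $a,b$ and $P_{3}$ on vertices $c,d,e$ (with edges $cd$ and $de$), and $P_{2} \cup C_{3}$ with $P_{2}$ on vertices $a,b$ and $C_{3}$ on vertices $c,d,e$. In each case the $P_{2}$-component contributes the edge $ab$, and in each case the second component contains at least one edge (say $cd$) whose endpoints are non-adjacent to $a$ and $b$ because the two components are disjoint in the underlying graph. Therefore the subset $\{a,b,c,d\}$ induces exactly two disjoint edges, namely an induced $2K_{2}$. This shows that both $P_{2} \cup C_{3}$ and $P_{2} \cup P_{3}$ are $2K_{2}$-exist.

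Putting these two observations together, no graph can satisfy both halves of the definition of $2K_{2}$-free-split, so $\fs(\{2K_{2}\}) = \emptyset$, which is the claim of the corollary. I do not expect any real obstacle here: the entire argument is a one-line observation once Proposition \ref{Proposition: CC(2K2)} is available, the only thing worth being careful about is making the induced-subgraph check explicit rather than appealing to ``obvious disconnection''.
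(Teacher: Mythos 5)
Your proposal is correct and matches the paper's intended derivation: the corollary is stated as an immediate consequence of Proposition \ref{Proposition: CC(2K2)}, and the only content is verifying that both $P_{2} \cup C_{3}$ and $P_{2} \cup P_{3}$ contain an induced $2K_{2}$ (e.g.\ on $\{a,b,c,d\}$), which you do explicitly. No issues.
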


\begin{proposition}{\label{Proposition: EC(2k2)}}
	The graphs in Figure \ref{Figure: the graphs in EC(2K2)} are the only critically $2K_{2}$-exist graphs. 
\end{proposition}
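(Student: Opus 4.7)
The plan is to characterize critically $2K_2$-exist graphs by fixing an induced copy of $2K_2$ and analyzing how the remaining vertices can attach. Let $G$ be critically $2K_2$-exist and fix $S = \{a, b, c, d\} \subseteq V(G)$ with $G[S]$ realized by the edges $ab$ and $cd$. Theorem~\ref{Theorem: critical H-exist: not S is independent} gives that $V(G) - S$ is independent, so $N(v) \subseteq S$ for every $v \in V(G) - S$. Corollary~\ref{coro: no vertex adajcet to 1 or 2 or 3 or max degree in critical graph} then rules out $|N(v)| = 1$ and the adjacent pairs $\{a,b\}$ and $\{c,d\}$; the ``induces $P_3$ or $C_3$'' clause is vacuous here since $G[S] \cong 2K_2$. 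Hence the admissible neighborhoods are the four nonadjacent pairs $\{a,c\}, \{a,d\}, \{b,c\}, \{b,d\}$, the four triples (each inducing $P_2 + K_1$), and $S$ itself. The automorphism group of $2K_2$ collapses these to three equivalence classes of vertex types.

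Next, I would carry out a pairwise compatibility analysis: for each pair of vertex types that can appear together in $V(G) - S$, I would look for an edge $e \in E(G)$ whose contraction leaves an induced $2K_2$ intact in $G/e$. Two kinds of contraction do most of the work: contracting an edge inside $S$ (such as $ab$) collapses part of the original $2K_2$, while contracting a cross-edge such as $va$ introduces a merged vertex whose neighborhood in $S$ can re-create a $2K_2$ with the remaining three vertices. Combinations admitting such an $e$ are forbidden; the surviving ones form a finite list once symmetry is applied. Together with the constraints on single vertices above, this pins down the possible graphs $G$ up to isomorphism.

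I would then match each surviving candidate to its entry in Figure~\ref{Figure: the graphs in EC(2K2)}, and, for each entry, verify that the graph is indeed critically $2K_2$-exist. The verification is easiest when the induced $2K_2$ is unique and every edge is $2K_2$-critical for it, for then Lemma~\ref{lemma: almost-dominating and 2k2-free} applies directly; in the remaining cases I would appeal to Proposition~\ref{Proposition: almost-dominating and 2k2-free}, certifying $2K_2$-freeness of each contracted graph by checking that every one of its edges is almost-dominating.

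The hard part will be the case analysis in the second step. Several type-pair combinations must be considered, and exhibiting a contraction that leaves an induced $2K_2$ takes some care because one must ensure the surviving $2K_2$ is not itself disrupted by other vertices of $V(G) - S$. The subtlest subcases are those involving a vertex of ``full'' type (neighborhood $S$), which behaves almost like a universal vertex of $G$, and those involving two triples with overlapping supports, where the obvious choice of $e$ can destroy every candidate $2K_2$ at once and a more careful choice is required.
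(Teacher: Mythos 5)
Your plan coincides with the paper's proof in structure: fix $S$ inducing $2K_2$, invoke Theorem~\ref{Theorem: critical H-exist: not S is independent} and Corollary~\ref{coro: no vertex adajcet to 1 or 2 or 3 or max degree in critical graph} to restrict the neighborhoods of vertices outside $S$ to nonadjacent pairs, triples, or all of $S$, then do a compatibility analysis on pairs of outside vertices, and finally certify the survivors via Lemma~\ref{lemma: almost-dominating and 2k2-free}. The setup and the verification strategy are both correct and match the paper.

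The gap is that the compatibility analysis, which is the entire mathematical content of the proposition, is only announced and not carried out. The paper needs six separate claims here, and they are not symmetric routine checks: for instance, two degree-$2$ vertices with disjoint neighborhoods force $G \cong H_{1}$ (no further vertex can attach at all), whereas two degree-$2$ vertices with equal neighborhoods are perfectly compatible and may be repeated arbitrarily often, which is why $H_{5}$ and $H_{6}$ are infinite families with parameters $|W|,|X|,|Y|,|Z|\geq 0$. Your phrase ``the surviving ones form a finite list once symmetry is applied'' glosses over exactly this distinction: the outcome is not a finite list of graphs but a finite list of \emph{templates}, some rigid and some with unboundedly repeatable vertex classes, and deciding which classes are repeatable (e.g.\ two degree-$3$ vertices whose common neighborhood is a nonadjacent pair force $G\cong H_{3}$, while certain other degree-$3$ pairs coexist freely inside $H_{5}$) is precisely where the work lies. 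Until those cases are enumerated and each excluded combination is witnessed by an explicit contraction leaving an induced $2K_2$, the claim that the figure lists \emph{all} critically $2K_2$-exist graphs is not established.
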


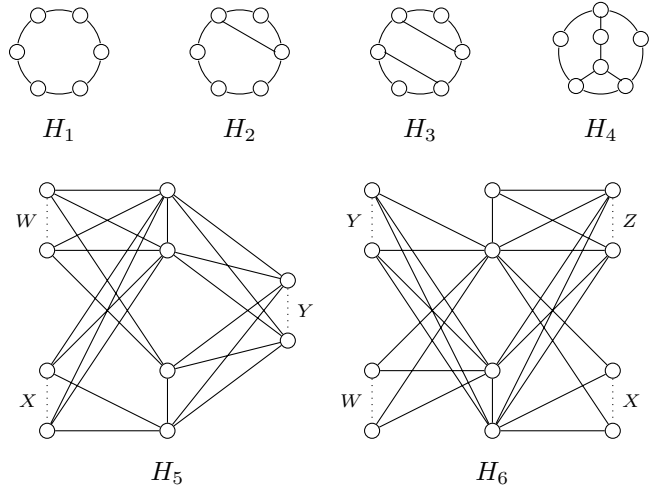
\begin{figure}
	\centering
	\begin{tikzpicture}[hhh/.style={draw=black,circle,inner sep=2pt,minimum size=0.2cm},scale=0.8]
		\begin{scope}[shift={(-4.5,0)}]
			\node 		(h) at (0,0)	 	{$H_{1}$};
			\begin{scope}[shift={(0,1.3)}]
				\def \n {6}
				\def \radius {0.7cm}
				\def \radiusCorrect {3}
				\def \margin {12} % margin in angles, depends on the radius
				\def \rotate {0}	% to rotate the cycle
				
				\foreach \s in {1,...,\n}
				{
					\node[hhh] at ({360/\n * (\s - 1)+\rotate}:\radius) {};
					\draw[ >=latex]  ({360/\n * (\s - 1)+\margin + \rotate}:\radius)
					arc ({360/\n * (\s - 1)+\margin+\rotate}:{360/\n * (\s)-\margin + \rotate}:\radius);
				}
			\end{scope}
		\end{scope}

		\begin{scope}[shift={(-1.5,0)}]
			\node 		(h) at (0,0)	 	{$H_{2}$};
			\begin{scope}[shift={(0,1.3)}]
				\def \n {6}
				\def \radius {0.7cm}
				\def \radiusCorrect {3}
				\def \margin {12} % margin in angles, depends on the radius
				\def \rotate {0}	% to rotate the cycle
				
				\foreach \s in {1,...,\n}
				{
					\node[hhh] at ({360/\n * (\s - 1)+\rotate}:\radius) {};
					\draw[ >=latex]  ({360/\n * (\s - 1)+\margin + \rotate}:\radius)
					arc ({360/\n * (\s - 1)+\margin+\rotate}:{360/\n * (\s)-\margin + \rotate}:\radius);
				}
				\draw ({360/\n * (1 - 1) + \rotate}:\radius-\radiusCorrect)  --  ({360/\n * (3 - 1) + \rotate}:\radius-\radiusCorrect);
			\end{scope}
		\end{scope}	
		
		\begin{scope}[shift={(1.5,0)}]
			\node 		(h) at (0,0)	 	{$H_{3}$};
			\begin{scope}[shift={(0,1.3)}]
				\def \n {6}
				\def \radius {0.7cm}
				\def \radiusCorrect {3}
				\def \margin {12} % margin in angles, depends on the radius
				\def \rotate {0}	% to rotate the cycle
				
				\foreach \s in {1,...,\n}
				{
					\node[hhh] at ({360/\n * (\s - 1)+\rotate}:\radius) {};
					\draw[ >=latex]  ({360/\n * (\s - 1)+\margin + \rotate}:\radius)
					arc ({360/\n * (\s - 1)+\margin+\rotate}:{360/\n * (\s)-\margin + \rotate}:\radius);
				}
				\draw ({360/\n * (1 - 1) + \rotate}:\radius-\radiusCorrect)  --  ({360/\n * (3 - 1) + \rotate}:\radius-\radiusCorrect)
				({360/\n * (6 - 1) + \rotate}:\radius-\radiusCorrect)  --  ({360/\n * (4 - 1) + \rotate}:\radius-\radiusCorrect);
			\end{scope}
		\end{scope}
		
		\begin{scope}[shift={(4.5,0)}]
			\node 		(h) at (0,0)	 	{$H_{4}$};
			\begin{scope}[shift={(0,1.3)}]
				\def \n {5}
				\def \radius {0.7cm}
				\def \radiusCorrect {3}
				\def \margin {12} % margin in angles, depends on the radius
				\def \rotate {90}	% to rotate the cycle
				
				\foreach \s in {1,...,\n}
				{
					\node[hhh] at ({360/\n * (\s - 1)+\rotate}:\radius) {};
					\draw[ >=latex]  ({360/\n * (\s - 1)+\margin + \rotate}:\radius)
					arc ({360/\n * (\s - 1)+\margin+\rotate}:{360/\n * (\s)-\margin + \rotate}:\radius);
				}
				\node[hhh] (x) at (0,0.25) 	{};
				\node[hhh] (t) at (0,-0.25) 	{};
				\draw ({360/\n * (1 - 1) + \rotate}:\radius-\radiusCorrect)  --  (x) -- (t)
				({360/\n * (4 - 1) + \rotate}:\radius-\radiusCorrect)  -- (t) --  ({360/\n * (3 - 1) + \rotate}:\radius-\radiusCorrect);
			\end{scope}
		\end{scope}

		\begin{scope}[shift={(-2.7,-6)}]
			\node 	   (label) at (0,0.3) 		{$H_{5}$};
			\node[hhh] (r) at (0,1) 	{};
			\node[hhh] (s) at (0,2) 	{};
			\node[hhh] (t) at (0,4) 	{};
			\node[hhh] (u) at (0,5) 	{};
			
			\node[hhh] (v41) at (2,3.5) 	{};
			\node[hhh] (v42) at (2,2.5) 	{};
			%\node (v4dots) at (2,3.15) 	{$\vdots Y$};
			\draw[dotted] (v41) to node[right]{\scriptsize $Y$} (v42);
			
			\node[hhh] (v311) at (-2,5) 	{};
			\node[hhh] (v312) at (-2,4) 	{};
			%	\node (v4dots) at (-2,4.6) 	{$W \vdots$};
			\draw[dotted] (v311) to node[left]{\scriptsize $W$} (v312);	
			
			\node[hhh] (v321) at (-2,2) 	{};
			\node[hhh] (v322) at (-2,1) 	{};
			%\node (v4dots) at (-2,1.6) 	{$X \vdots$};		
			\draw[dotted] (v321) to node[left]{\scriptsize $X$} (v322); 
			
			\draw (r) -- (s) (u) -- (t)
			(r) --(v41) -- (s) (t) --(v41) -- (u)
			(r) --(v42) -- (s) (t) --(v42) -- (u)
			
			(u) --(v311) -- (t) (s) --(v311)
			(u) --(v312) -- (t) (s) --(v312)
			
			(u) --(v321) -- (t) (r) --(v321)
			(u) --(v322) -- (t) (r) --(v322);
		\end{scope}

		\begin{scope}[shift={(2.7,-6)}]
			\node 	   (label) at (0,0.3) 		{$H_{6}$};
			\node[hhh] (r) at (0,1) 	{};
			\node[hhh] (s) at (0,2) 	{};
			\node[hhh] (t) at (0,4) 	{};
			\node[hhh] (u) at (0,5) 	{};
			
			\node[hhh] (v41) at (2,5) 	{};
			\node[hhh] (v42) at (2,4) 	{};
			%	\node (v4dots) at (2,4.6) 	{$\vdots$};
			\draw[dotted] (v41) to node[right]{\scriptsize $Z$} (v42);
			
			\node[hhh] (v31) at (-2,5) 	{};
			\node[hhh] (v32) at (-2,4) 	{};
			%\node (v3dots) at (-2,4.6) 	{$\vdots$};	
			\draw[dotted] (v31) to node[left]{\scriptsize $Y$} (v32);
			
			\node[hhh] (v211) at (-2,2) 	{};
			\node[hhh] (v212) at (-2,1) 	{};
			%	\node (v21dots) at (-2,1.6) 	{$\vdots$};	
			\draw[dotted] (v211) to node[left]{\scriptsize $W$} (v212);
			
			\node[hhh] (v221) at (2,2) 	{};
			\node[hhh] (v222) at (2,1) 	{};
			%	\node (v22dots) at (2,1.6) 	{$\vdots$};	
			\draw[dotted] (v221) to node[right]{\scriptsize $X$} (v222);	
			
			\draw (r) -- (s) (u) -- (t)
			(r) --(v41) -- (s) (t) --(v41) -- (u)
			(r) --(v42) -- (s) (t) --(v42) -- (u)
			
			(r) --(v31) -- (s) (t) --(v31)
			(r) --(v32) -- (s) (t) --(v32)
			
			(s) --(v211) -- (t)	(s) --(v212) -- (t)
			(r) --(v221) -- (t)	(r) --(v222) -- (t);
		\end{scope}
		
	\end{tikzpicture}
	\caption{Critically $2K_{2}$-exist graphs}
	\label{Figure: the graphs in EC(2K2)}
\end{figure}

\begin{proof}
	\renewcommand{\qedsymbol}{\claimqed}
	Through this proof, we assume that $G$ is a critically $2K_{2}$-exist graph with $S=\{r,s,t,u\}$ such that $G[S]$ is isomorphic to $2K_{2}$, where $rs$ and $tu$ are edges in $G$. By Theorem \ref{Theorem: critical H-exist: not S is independent}, we note that $V(G) - S$ is independent. Thus, any vertex in $V(G) - S$ is adjacent to vertices only in $S$. By Corollary \ref{coro: no vertex adajcet to 1 or 2 or 3 or max degree in critical graph}, if $v \in V(G) - S$, then neither $|N(v)| = 1$ nor $v$ is adjacent to exactly two adjacent vertices.
	
	\begin{claim}{\label{Claim: EC(2K2) different non intersecting v2}}
		If $v,w \in V(G) - S$ such that $|N(v)| = |N(w)|=2$ while $N(v) \cap N(w) = \phi$, then $G$ is isomorphic to $H_{1}$.
	\end{claim}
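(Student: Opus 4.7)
The plan is to pin down every vertex of $G$ by repeated application of Theorem \ref{Theorem: critical H-exist: not S is independent} to several auxiliary $2K_{2}$-inducing subsets. Once $V(G) = \{r,s,t,u,v,w\}$ is forced, the induced adjacencies trace out exactly the six-cycle $H_{1}$.

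First I would fix a labelling. By the automorphism of $S$ swapping $t$ and $u$, we may assume $N(v) = \{r,t\}$; the only nonadjacent $2$-subset of $S$ disjoint from $\{r,t\}$ is $\{s,u\}$, so $N(w) = \{s,u\}$. Using the independence of $V(G) - S$ from Theorem \ref{Theorem: critical H-exist: not S is independent}, a routine check shows that the two sets $S'' := \{r,v,u,w\}$ and $S''' := \{s,v,t,w\}$ each induce a $2K_{2}$ in $G$, with edges $rv,uw$ in the former and $sw,vt$ in the latter.

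The crux is a double application of Theorem \ref{Theorem: critical H-exist: not S is independent} to $S''$ and $S'''$. Suppose for contradiction that some $x \in V(G) \setminus \{r,s,t,u,v,w\}$ exists. Since $x \in V(G) - S$ and $V(G) - S$ is independent, we have $\emptyset \neq N(x) \subseteq S$ (nonemptiness follows from the standing assumption that isolated vertices are excluded). Now $x, s, t \in V(G) - S''$, so the independence of $V(G) - S''$ forbids both $x \sim s$ and $x \sim t$; analogously $x, r, u \in V(G) - S'''$ and independence of $V(G) - S'''$ forbids $x \sim r$ and $x \sim u$. Hence $N(x) \cap \{r,s,t,u\} = \emptyset$, contradicting $\emptyset \neq N(x) \subseteq S$.

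Therefore $V(G) = \{r,s,t,u,v,w\}$, and reading off the adjacencies the six vertices trace out the cycle $r - v - t - u - w - s - r$, which is $H_{1} = C_{6}$. I do not foresee a significant obstacle beyond recognising that $S''$ and $S'''$ are themselves $2K_{2}$-inducing sets, so that Theorem \ref{Theorem: critical H-exist: not S is independent} can be invoked repeatedly to pin the possible neighbours of $x$ down to the empty set.
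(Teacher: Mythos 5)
Your proof is correct and follows essentially the same route as the paper: both arguments exploit the fact that $v$ and $w$ together with suitable pairs from $S$ induce further copies of $2K_{2}$, and use these to forbid any additional vertex. The paper does this with a single auxiliary set ($\{s,u,v,w\}$ in its labelling) plus a symmetry reduction and a direct contraction of $rx$, whereas you use two auxiliary sets and invoke Theorem \ref{Theorem: critical H-exist: not S is independent} twice; the underlying mechanism is identical.
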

	\begin{proof}
		W.l.o.g., let $N(v) = \{r,u\}$ and $N(w) = \{s,t\}$. We will show that $V(G)=S\cup \{v,w\}$. For the sake of contradiction, assume that there is a vertex $x \in V(G) - S$. Thus, $x$ is adjacent to at least one vertex in $S$. W.l.o.g., let $x$ be adjacent to $r$. In $G/rx$, $f(\{s,u,v,w\})$ induces $2K_{2}$, which contradicts the fact that $G$ is a critically $2K_{2}$-exist. 
	\end{proof}
	
	\begin{claim}{\label{Claim: EC(2K2) different non intersecting v2 v3}}
		If $v,w,x \in V(G) - S$ such that $|N(v)| = |N(w)|=2$, $|N(x)| =3$ while $N(v) = N(w)$, then $N(v) \subset N(x)$.
	\end{claim}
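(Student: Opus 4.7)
My plan is to argue by contradiction: assume $N(v) \not\subseteq N(x)$, and produce an edge of $G$ whose contraction contains an induced $2K_{2}$, contradicting that $G$ is critically $2K_{2}$-exist.

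The first step is to pin down $N(v)$ and $N(w)$. By Corollary \ref{coro: no vertex adajcet to 1 or 2 or 3 or max degree in critical graph}, no vertex in $V(G) - S$ has degree $1$ or is adjacent to two adjacent vertices. Since the only edges in $G[S]$ are $rs$ and $tu$, a size-$2$ neighborhood in $V(G) - S$ must consist of one vertex from $\{r, s\}$ and one from $\{t, u\}$. Using the automorphisms of $G[S] \cong 2K_{2}$, I may assume $N(v) = N(w) = \{r, t\}$. If $\{r, t\} \not\subseteq N(x)$, then since the automorphism swapping the two edges $rs$ and $tu$ fixes the set $\{r, t\}$, I may further assume $r \notin N(x)$, so that $N(x) = \{s, t, u\}$.

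The heart of the argument is to contract the edge $rw$ (which exists because $r \in N(w)$), producing a new vertex $y$ with $N_{G/rw}(y) = (N_{G}(r) \cup N_{G}(w)) \setminus \{r, w\}$. I want to show that $\{v, y, u, x\}$ induces a $2K_{2}$ in $G/rw$, with matching edges $vy$ and $ux$. The edge $vy$ is immediate from $v \in N_{G}(r)$, and $ux$ is immediate from $u \in N_{G}(x)$. The non-edges $vu$ and $vx$ follow from $N(v) = \{r, t\}$ and from the fact that $v, x$ both lie in the independent set $V(G) - S$.

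The main obstacle I expect is verifying the two non-edges at the contracted vertex $y$, since these are the only places the case hypothesis is actually consumed. For $yu$, I need $u \notin N_{G}(r) \cup N_{G}(w)$: the first because $ru$ is not an edge of $G[S]$, the second because $N(w) = \{r, t\}$. For $yx$, I need $x \notin N_{G}(r) \cup N_{G}(w)$; the first is exactly the case assumption $r \notin N(x)$, and the second uses independence of $V(G) - S$ to rule out $x \in N_{G}(w)$. With all four non-edges verified, $\{v, y, u, x\}$ induces a $2K_{2}$ in $G/rw$, contradicting the critically $2K_{2}$-exist hypothesis and completing the proof.
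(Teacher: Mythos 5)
Your proof is correct and follows essentially the same strategy as the paper's: reduce to a concrete labelling, contract the edge between $w$ and one of its $S$-neighbours, and exhibit an induced $2K_{2}$ in the contraction. The only (immaterial) difference is that the paper contracts $w$ with its $S$-neighbour that lies in $N(x)$ and finds a $2K_{2}$ avoiding the new vertex, whereas you contract $w$ with its $S$-neighbour outside $N(x)$ and use the new vertex as one of the four.
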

	\begin{proof}
		W.l.o.g., let $N(v) =N(w)= \{r,u\}$. For the sake of contradiction and w.l.o.g., assume $N(x) = \{r,s,t\}$. In $G/rw$, $f(\{s,u,v,x\})$ induces $2K_{2}$, which contradicts the fact that $G$ is a critically $2K_{2}$-exist. 
	\end{proof}

	\begin{claim}{\label{Claim: EC(2K2) v2,v2,v3 maximal}}
		If $v,w,x \in V(G) - S$ such that $|N(v)| = |N(w)| = 2$ while $|N(v) \cap N(w)| = 1$ and $|N(x)| = 3$ where $N(v) \cap N(w) \cap N(x) = \phi$, then $G$ is isomorphic to $H_{4}$.
	\end{claim}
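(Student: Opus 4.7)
The plan is to assume for contradiction that $V(G)$ strictly contains $S \cup \{v, w, x\}$, and show that any further vertex $y \in V(G) - (S \cup \{v,w,x\})$ either contradicts a preceding claim or yields a $G$-contraction containing an induced $2K_{2}$, violating critical $2K_{2}$-existence. Once no such $y$ can exist, the resulting seven-vertex graph will be identified with $H_{4}$ by exhibiting the induced $C_{5}$ on $\{r, s, x, t, v\}$ together with $w$ and $u$ attached as in Figure~\ref{Figure: the graphs in EC(2K2)}.

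First I would fix, without loss of generality, $N(v) = \{r, t\}$, $N(w) = \{r, u\}$, and $N(x) = \{s, t, u\}$; this labelling is forced by $|N(v) \cap N(w)| = 1$, $N(v) \cap N(w) \cap N(x) = \emptyset$, and $|N(x)| = 3$, after naming the vertices of $S$. By Theorem~\ref{Theorem: critical H-exist: not S is independent} the set $V(G) - S$ is independent, so $N(y) \subseteq S$; and by Corollary~\ref{coro: no vertex adajcet to 1 or 2 or 3 or max degree in critical graph} we get $|N(y)| \geq 2$ with every size-$2$ neighborhood being a non-edge of $G[S]$. Thus $N(y)$ is one of the non-edges $\{r,t\}, \{r,u\}, \{s,t\}, \{s,u\}$, a three-element subset of $S$, or $S$ itself.

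The two-element subcases dispatch via the earlier claims. If $N(y) \in \{\{r,t\}, \{r,u\}\}$, then $y$ duplicates $N(v)$ or $N(w)$; Claim~\ref{Claim: EC(2K2) different non intersecting v2 v3} then forces that duplicated neighborhood into $N(x) = \{s, t, u\}$, which fails because $r \notin N(x)$. If $N(y) \in \{\{s,t\}, \{s,u\}\}$, then $y$ together with $w$ or $v$ supplies two vertices of $V(G) - S$ whose size-$2$ neighborhoods are disjoint, and Claim~\ref{Claim: EC(2K2) different non intersecting v2} forces $G \cong H_{1}$, contradicting the presence of $x$ (no vertex of $H_{1}$, which is $C_{6}$, has three neighbors in $S$). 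For the remaining subcases I plan to present, in each one, an explicit edge $e$ and a four-element set $T \subseteq V(G/e)$ with $G/e[T] \cong 2K_{2}$: for $N(y) = \{r, s, t\}$ contract $sy$ with $T = \{v, r, x, u\}$; the case $N(y) = \{r, s, u\}$ follows by the involution $t \leftrightarrow u$, $v \leftrightarrow w$; for $N(y) = \{r, t, u\}$ contract $uy$ with $T = \{w, r, x, t\}$; for $N(y) = \{s, t, u\}$ contract $sx$ with $T = \{r, u, v, y\}$; and for $N(y) = S$ contract $sy$ with $T = \{v, r, x, u\}$. Each verification reduces to routine bookkeeping of neighborhoods after contraction.

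The main obstacle is choosing the contractions in the size-three subcases: the merged vertex inherits many neighbors of both $y$ and its contracted partner, so most choices either absorb an endpoint of the intended $2K_{2}$ or create a stray adjacency that destroys it. My guiding heuristic is to contract along an edge incident to $y$ (or to $x$, in the subcase $N(y) = N(x)$) whose other endpoint lies on the sparsely connected side of the original $2K_{2}$, so that the merged vertex effectively replaces $s$ or $u$; the witness $2K_{2}$ is then built from $v$'s attachment on one side of the $2K_{2}$ together with $x$'s (or the merged vertex's) attachment on the other.
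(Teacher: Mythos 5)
Your proof is correct and follows essentially the same strategy as the paper's: assume an extra vertex $y$ outside $S\cup\{v,w,x\}$ and contradict critical $2K_{2}$-existence by contracting an edge incident to $y$ that leaves an induced $2K_{2}$ (such as $\{v,r,x,u\}$ or $\{w,r,x,t\}$) untouched. The only difference is bookkeeping: the paper splits on which single vertex of $S$ is adjacent to $y$ (three cases, since $s$, $t$, and $u$ each immediately yield a surviving $2K_{2}$ and $N(y)\subseteq\{r\}$ is impossible), whereas you enumerate all nine admissible neighborhoods $N(y)$ and dispatch the two-element ones via Claims \ref{Claim: EC(2K2) different non intersecting v2} and \ref{Claim: EC(2K2) different non intersecting v2 v3} --- both routes are valid.
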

	\begin{proof}
		W.l.o.g., let $N(v) = \{r,u\}$, $N(w) = \{r,t\}$, and $N(x) = \{s,t,u\}$. For the sake of contradiction, assume that there is a vertex $y \in V(G) - S$. Hence, $y$ is adjacent to at least one vertex in $S$. If $y$ is adjacent to $s$ (or $u$), then $f(\{r,t,v,x\})$ induces $2K_{2}$ in $G/sy$ (or $G/uy$), which contradicts the fact that $G$ is a critically $2K_{2}$-exist. Moreover, if $y$ is adjacent to $t$, then $f(\{r,u,w,x\})$ induces $2K_{2}$ in $G/ty$, which contradicts the fact that $G$ is a critically $2K_{2}$-exist.
	\end{proof}
	
	\begin{claim}{\label{Claim: EC(2K2) v2,v2,v3 maximal 2}}
		If $v,w,x \in V(G) - S$ such that $|N(v)|=|N(w)|=2$ while $|N(v) \cap N(w)| = 1$ and $|N(x)| = 3$, then either $N(v) \cup N(w) =  N(x)$ or $N(v) \cap N(w) \cap N(x) = \phi$ and $G$ is isomorphic to $H_{4}$.
	\end{claim}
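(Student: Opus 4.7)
The plan is to perform a case analysis on $N(x)$. Following the WLOG convention used in the preceding claims, via the automorphism of $G[S]$ swapping $\{r,s\}$ with $\{t,u\}$ and swapping $v$ with $w$, assume $N(v) = \{r, u\}$ and $N(w) = \{r, t\}$, so that $N(v) \cap N(w) = \{r\}$ and $N(v) \cup N(w) = \{r, t, u\}$. Since $|N(x)| = 3$ and $N(x) \subseteq S = \{r,s,t,u\}$, there are four subcases, obtained by deleting one element of $S$.

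I would dispatch the two easy subcases first. If $N(x) = \{r, t, u\}$, then directly $N(v) \cup N(w) = N(x)$, giving the first conclusion. If $N(x) = \{s, t, u\}$, then $r \notin N(x)$, so $N(v) \cap N(w) \cap N(x) = \emptyset$; the hypotheses of Claim~\ref{Claim: EC(2K2) v2,v2,v3 maximal} are then satisfied by $v,w,x$, and that claim yields $G \cong H_4$.

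The remaining subcases, $N(x) = \{r,s,t\}$ and $N(x) = \{r,s,u\}$, are precisely the ones in which neither conclusion of the claim would hold, so these must be ruled out. They are interchanged by the automorphism swapping $t \leftrightarrow u$ together with $v \leftrightarrow w$, so it suffices to derive a contradiction from $N(x) = \{r,s,t\}$. Since $w \sim t$, I would contract the edge $tw$ and verify that $f(\{s,x,u,v\}) = \{s,x,u,v\}$ induces $2K_2$ in $G/tw$: the edges $sx$ (from $s \in N(x)$) and $uv$ (from $u \in N(v)$) survive, while the four cross-pairs are non-edges, namely $s \not\sim u$ (non-edge of the original $2K_2$), $s \not\sim v$ and $x \not\sim v$ (since $V(G) - S$ is independent by Theorem~\ref{Theorem: critical H-exist: not S is independent} and $s \notin N(v)$), and $x \not\sim u$ (since $u \notin N(x)$). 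This contradicts $G$ being critically $2K_2$-exist.

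The main obstacle is identifying the right contraction in the forbidden subcases; once $tw$ is chosen, the adjacency check is routine, and the overall structure, enumerate $N(x)$, invoke Claim~\ref{Claim: EC(2K2) v2,v2,v3 maximal} for the $r \notin N(x)$ case, and use a single contraction combined with the $t \leftrightarrow u$ symmetry for the remaining two cases, is the natural continuation of the proofs of Claims~\ref{Claim: EC(2K2) different non intersecting v2}--\ref{Claim: EC(2K2) v2,v2,v3 maximal}.
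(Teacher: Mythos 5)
Your proposal is correct and follows essentially the same route as the paper: dispose of the two conclusions' cases first (the second via Claim~\ref{Claim: EC(2K2) v2,v2,v3 maximal}), then reduce the remaining possibilities by symmetry to $N(x)=\{r,s,t\}$ and exhibit a contraction in which $\{s,u,v,x\}$ induces $2K_2$. The only cosmetic difference is that you contract $tw$ where the paper contracts $rw$; both produce the same induced $2K_2$ on the same vertex set.
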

	\begin{proof}
		By Claim \ref{Claim: EC(2K2) v2,v2,v3 maximal}, if $N(v) \cap N(w)\cap N(x)=\phi$, then $G$ is isomorphic to $H_{4}$. If $N(v)\cup N(w)= N(x)$, then we are done. As a result, and w.l.o.g, let $N(v) = \{r,u\}$ and $N(w)=\{r,t\}$. Assume for the sake of contradiction that $N(x)=\{r,s,t\}$. However, $f(\{s,u,v,x\})$ induces $2K_{2}$ in $G/rw$, which contradicts the fact that $G$ is a critically $2K_{2}$-exist.
	\end{proof}

	\begin{claim}{\label{Claim: EC(2K2) v3,v3 maximal}}
		If $v,w \in V(G) - S$ such that $|N(v)|=|N(w)|=3$, while $N(v) \cap N(w)$ consists of two nonadjacent vertices in $S$, then $G$ is isomorphic to $H_{3}$.
	\end{claim}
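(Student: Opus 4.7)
The plan is to show that under the hypotheses $V(G)=S\cup\{v,w\}$, and then verify directly that the resulting six-vertex graph is isomorphic to $H_{3}$.

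First, I reduce to a canonical configuration. Since $G[S]\cong 2K_{2}$ has an automorphism group acting transitively on the four nonadjacent pairs $\{r,t\},\{r,u\},\{s,t\},\{s,u\}$, I may assume after relabeling that $N(v)\cap N(w)=\{r,u\}$. Combined with $|N(v)|=|N(w)|=3$, the constraint $N(v)\neq N(w)$, and the restriction from Corollary~\ref{coro: no vertex adajcet to 1 or 2 or 3 or max degree in critical graph} that no vertex outside $S$ is adjacent to an adjacent pair, this forces (up to swapping $v$ and $w$) $N(v)=\{r,s,u\}$ and $N(w)=\{r,t,u\}$. A direct check then shows a second induced $2K_{2}$ in $G$: the set $\{s,t,v,w\}$ contains exactly the disjoint edges $sv$ and $tw$.

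Next I rule out any extra vertex $y\in V(G)-S-\{v,w\}$. By Theorem~\ref{Theorem: critical H-exist: not S is independent} we have $N(y)\subseteq S$, and by Corollary~\ref{coro: no vertex adajcet to 1 or 2 or 3 or max degree in critical graph} that $|N(y)|\geq 2$ with $N(y)\notin\{\{r,s\},\{t,u\}\}$. The key observation is that the $2K_{2}$ on $\{s,t,v,w\}$ survives the contraction of any edge whose endpoints both lie outside $\{s,t,v,w\}$. If $r\in N(y)$, the edge $ry$ has this property, so $G/ry$ still contains $2K_{2}$, contradicting criticality; the edge $uy$ handles $u\in N(y)$ analogously. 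Hence $N(y)\cap\{r,u\}=\emptyset$, leaving only $N(y)=\{s,t\}$ to consider.

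For this remaining subcase the original $2K_{2}$ on $\{s,t,v,w\}$ is no longer usable, so I exhibit a fresh one: the set $\{y,s,u,w\}$ contains exactly the disjoint edges $ys$ and $uw$. Since the edge $rv$ is disjoint from $\{y,s,u,w\}$, contracting $rv$ preserves this $2K_{2}$ in $G/rv$, again contradicting criticality. Hence no such $y$ exists and $V(G)=S\cup\{v,w\}$.

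Finally, the eight edges of $G$ are $rs,tu,rv,sv,uv,rw,tw,uw$; the sequence $s,r,w,t,u,v$ traces a $6$-cycle in $G$, and the remaining two edges $rv$ and $uw$ form two disjoint short chords placed in rotationally symmetric positions, which matches the structure of $H_{3}$ in Figure~\ref{Figure: the graphs in EC(2K2)}. The explicit bijection $s\mapsto 2,\ r\mapsto 1,\ w\mapsto 6,\ t\mapsto 5,\ u\mapsto 4,\ v\mapsto 3$ sends each of the eight edges of $G$ to an edge of $H_{3}$, establishing the isomorphism. The only delicate step is producing the auxiliary $2K_{2}$ on $\{y,s,u,w\}$ in the $N(y)=\{s,t\}$ subcase; once both auxiliary $2K_{2}$'s are in hand, the contraction-based contradictions are mechanical.
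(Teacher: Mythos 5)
Your proof is correct and follows essentially the same route as the paper's: fix the two neighborhoods up to symmetry, rule out any additional vertex $y$ by exhibiting an induced $2K_{2}$ disjoint from a contractible edge incident to $y$ (or, in the residual $N(y)=\{s,t\}$ subcase, disjoint from another edge of $G$), and conclude $V(G)=S\cup\{v,w\}$. The only difference is cosmetic — a different choice of nonadjacent pair for $N(v)\cap N(w)$ and an explicit isomorphism onto $H_{3}$, which the paper leaves implicit.
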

	\begin{proof}
		W.l.o.g., let $N(v)=\{r,t,u\}$ and $N(w)=\{r,s,t\}$. For the sake of contradiction, assume that there is a vertex $x \in V(G) - S$. If $x$ is adjacent to $r$ (or $t$), then $f(\{s,u,v,w\})$ induces $2K_{2}$ in $G/rx$ (or $G/tx$), which contradicts the fact that $G$ is a critically $2K_{2}$-exist graph. Thus, $N(x)=\{s,u\}$, however, $f(\{s,t,v,x\})$ induces $2K_{2}$ in $G/rw$, which contradicts the fact that $G$ is a critically $2K_{2}$-exist.
	\end{proof}

	\begin{claim}{\label{Claim: EC(2K2) no v3,v3,nv2}}
		If $v,w,x \in V(G) - S$ such that $|N(v)|=|N(w)|=3$, while $N(v) \cap N(w)$ is two adjacent vertices in $S$, then $|N(x)|\not=2$.
	\end{claim}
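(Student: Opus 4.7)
The plan is to derive a contradiction by exhibiting a single contraction $G/e$ that still contains an induced $2K_{2}$, which is impossible if $G$ is critically $2K_{2}$-exist. First I would fix the labels: since $N(v)\cap N(w)$ is a pair of adjacent vertices in $S$ and the only adjacent pairs in $S$ are $\{r,s\}$ and $\{t,u\}$, W.l.o.g.\ $N(v)\cap N(w)=\{r,s\}$. Because $|N(v)|=|N(w)|=3$ and the intersection has size $2$, each of $N(v)$ and $N(w)$ contains exactly one of $t,u$; W.l.o.g.\ $N(v)=\{r,s,t\}$ and $N(w)=\{r,s,u\}$.

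Assume for contradiction that $|N(x)|=2$. By Corollary \ref{coro: no vertex adajcet to 1 or 2 or 3 or max degree in critical graph}, $N(x)$ consists of two nonadjacent vertices of $S$, so $N(x)\in\{\{r,t\},\{r,u\},\{s,t\},\{s,u\}\}$. The configuration is preserved by the involution $r\leftrightarrow s$ and by the combined involution that swaps $t\leftrightarrow u$ and $v\leftrightarrow w$ simultaneously; together these act transitively on the four candidate sets, so W.l.o.g.\ $N(x)=\{r,t\}$.

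The key observation is that $\{s,t,w,x\}$ now induces a $2K_{2}$ in $G$: the edges are $sw$ (since $s\in N(w)$) and $tx$ (since $t\in N(x)$), while $sx$, $tw$, and $st$ are non-edges by inspection of the fixed neighborhoods, and $wx$ is a non-edge because $V(G)-S$ is independent by Theorem \ref{Theorem: critical H-exist: not S is independent}. Since this 4-set avoids both $r$ and $v$, I would then contract the edge $rv$ (present because $r\in N(v)$): the induced subgraph on $\{s,t,w,x\}$ in $G/rv$ coincides with the one in $G$, hence $G/rv$ contains an induced $2K_{2}$, contradicting that $G$ is critically $2K_{2}$-exist. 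Therefore $|N(x)|\neq 2$.

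I do not foresee a serious obstacle: the creative step is to isolate a 4-set that both induces $2K_{2}$ and avoids the endpoints of some contractible edge, and here $\{s,t,w,x\}$ together with the edge $rv$ does exactly that. Once the labels are fixed this is a direct neighborhood check, and the symmetry reduction means one case suffices.
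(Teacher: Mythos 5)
Your proof is correct and follows essentially the same route as the paper's: after the same symmetry reduction, you contract the edge from $v$ to the unique vertex of $S$ adjacent to all of $v,w,x$ and exhibit an induced $2K_{2}$ on a $4$-set disjoint from the contracted edge (your $\{s,t,w,x\}$ with edge $rv$ corresponds, under relabelling, to the paper's $\{r,t,w,x\}$ with edge $uv$). The only cosmetic difference is your choice of $\{r,s\}$ rather than $\{t,u\}$ as the common pair of adjacent neighbours.
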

	\begin{proof}
		W.l.o.g., Let $N(v)=\{r,t,u\}$ and $N(w)=\{s,t,u\}$. W.l.o.g and for the sake of contradiction, assume that $N(x)=\{r,u\}$. However, $f(\{r,t,w,x\})$ induces $2K_{2}$ in $G/uv$, which contradicts the fact that $G$ is a critically $2K_{2}$-exist.
	\end{proof}	
	
	By Claims \ref{Claim: EC(2K2) different non intersecting v2} to \ref{Claim: EC(2K2) no v3,v3,nv2}, the possible critically $2K_{2}$-exist graphs are those presented in Figure \ref{Figure: the graphs in EC(2K2)} whose proofs of being critically $2K_{2}$-exist for $H_{1}$, $H_{2}$, $H_{3}$, and $H_{4}$ are straightforward.
	
	\begin{claim}{\label{Claim: EC(2K2) H5 is a critical 2k2-exist}}
		The graph $H_{5}$ in Figure \ref{Figure: the graphs in EC(2K2)} is a critically $2K_{2}$-exist.
	\end{claim}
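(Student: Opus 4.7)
The strategy is to invoke Lemma~\ref{lemma: almost-dominating and 2k2-free} with $S = \{r,s,t,u\}$, which splits the task into two sub-goals: (i) establishing that $S$ is the unique four-vertex subset of $V(H_5)$ that induces $2K_2$, and (ii) checking that every edge of $H_5$ is $2K_2$-critical for $S$ in the sense of Theorem~\ref{Theorem: constructive edge characterization}. I would first record the neighborhoods read off from the figure: $V(H_5) - S = Y \cup W \cup X$ is independent; $N(y) = \{r,s,t,u\}$ for every $y \in Y$; $N(w) = \{s,t,u\}$ for every $w \in W$; $N(x) = \{r,t,u\}$ for every $x \in X$. A key observation is that both $t$ and $u$ are \emph{universal} to the outside, i.e., adjacent to every vertex of $Y \cup W \cup X$, while every outside vertex meets $S$ in at least three vertices.

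For (i) I would case-split on $k$, the number of vertices a candidate $2K_2$ has in common with $S$. Cases $k \leq 1$ are ruled out because $V(H_5) - S$ is independent, so the candidate cannot contain two disjoint edges. For $k = 3$, adding any outside vertex $a$ to a three-subset $T$ of $S$ yields at least three edges ($T$ already carries an edge of $S$, and $a$ contributes at least two further edges to $T$). For $k = 2$, the adjacent pairs $\{r,s\}$ and $\{t,u\}$ cannot extend to a $2K_2$ because the outside pair is independent and thus cannot furnish a second disjoint edge; each of the four non-adjacent pairs $\{r,t\}, \{r,u\}, \{s,t\}, \{s,u\}$ contains $t$ or $u$, so both outside vertices attach via an edge to that common element, and the induced graph on the four vertices becomes a $P_3 + K_1$, $P_4$, or $K_{2,2}$, never $2K_2$. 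Only $k = 4$ survives, and this case forces the candidate to be $S$ itself.

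For (ii), the edges $rs$ and $tu$ lie inside $S$ and satisfy condition~(1) of Theorem~\ref{Theorem: constructive edge characterization}. For an edge $az$ with $a \in V(H_5) - S$ and $z \in S$, I would verify that $a$ is not a corner dominated by $z$ in $H_5[S \cup \{a\}]$: since $|N(a)| \geq 3$ while in $H_5[S \cup \{a\}]$ the vertex $z$ has at most one $S$-neighbor (its partner via $rs$ or $tu$) together with $a$ itself, some neighbor of $a$ in $S$ lies outside $N[z]$. Hence condition~(2) holds for every such edge, and Lemma~\ref{lemma: almost-dominating and 2k2-free} concludes that $H_5$ is critically $2K_2$-exist. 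The principal obstacle is the uniqueness case analysis in (i); the criticality check in (ii) is essentially a short neighborhood comparison once uniqueness is secured.
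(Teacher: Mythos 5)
Your proof is correct and follows essentially the same route as the paper's: both establish that $S$ is the unique vertex set inducing $2K_{2}$ and that every edge is $2K_{2}$-critical for $S$, and then invoke Lemma~\ref{lemma: almost-dominating and 2k2-free}. The only differences are that you spell out the case analysis the paper merely asserts with ``we note that,'' and that your neighborhood labels (read off the figure) differ from the paper's prose description by an isomorphism swapping the roles of the two edges $rs$ and $tu$.
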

	\begin{proof}
		Graph $H_{5}$ in Figure \ref{Figure: the graphs in EC(2K2)} is isomorphic to a graph $G$ that contains a vertex subset $S=\{r,s,t,u\}$, where $G[S]$ is isomorphic to a $2K_{2}$ and $rs,tu \in E(G)$. Moreover, $V(G)= S \cup W \cup X \cup Y$, such that $N(w \in W)=\{r,s,t\}$, $N(x \in X)=\{r,s,u\}$, $N(y \in Y)=\{r,s,t,u\}$, and $|W|,|X|,|Y| \geq 0$.
		
		We note that $S$ is the only vertex set inducing $2K_{2}$ in $G$. Moreover, every edge in $E( G )$ is $G[S]$-critical for $S$. Thus, and by Lemma \ref{lemma: almost-dominating and 2k2-free}, $H_{5}$ in Figure \ref{Figure: the graphs in EC(2K2)} is a critically $2K_{2}$-exist.
	\end{proof}
	
	\begin{claim}{\label{Claim: EC(2K2) H6 is a critical 2k2-exist}}
		Graph $H_{6}$ in Figure \ref{Figure: the graphs in EC(2K2)} is a critically $2K_{2}$-exist.
	\end{claim}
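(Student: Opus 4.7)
The plan is to replicate, mutatis mutandis, the strategy used for $H_5$ in Claim~\ref{Claim: EC(2K2) H5 is a critical 2k2-exist}: I will invoke Lemma~\ref{lemma: almost-dominating and 2k2-free} after verifying its two hypotheses, namely that $S := \{r,s,t,u\}$ is the unique subset of $V(H_6)$ inducing $2K_{2}$ and that every edge of $H_6$ is $2K_{2}$-critical for $S$.

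For uniqueness, the key structural fact is that by construction $V(H_6) - S = W \cup X \cup Y \cup Z$ is an independent set, so any candidate $2K_{2}$-inducing 4-subset $T$ with $|T \cap (V(H_6)-S)| \geq 2$ has no edges among its external vertices and must realize both matching edges through $S$-endpoints. A short case analysis on $|T \cap S| \in \{0,1,2,3\}$ reduces the problem to a single question: for some non-edge pair $\{a,b\} \subseteq S$, does some external vertex meet $\{a,b\}$ in exactly one element? Reading off the four external neighborhood types $\{s,t\}$, $\{r,t\}$, $\{r,s,t\}$, $\{r,s,t,u\}$, one verifies that each non-edge pair in $S$, namely $\{r,t\}, \{r,u\}, \{s,t\}, \{s,u\}$, is either entirely contained in, or entirely disjoint from, every external neighborhood; hence the required asymmetry is absent and no additional $2K_{2}$ can occur in $H_6$.

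For critical\-ity of the edges, I invoke Theorem~\ref{Theorem: constructive edge characterization}. The internal edges $rs$ and $tu$ are critical for $S$ automatically by condition (1) of the theorem. For each edge $av$ with $a \in S$ and $v \in V(H_6)-S$, condition (2) requires a neighbor of $v$ outside $N_S(a) \cup \{a\}$; inspection of the four external neighborhood types shows that every such $v$ has a neighbor lying in the ``other component'' of the $2K_{2}$ induced by $S$ (the component not containing $a$), which immediately supplies the required witness.

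With both hypotheses of Lemma~\ref{lemma: almost-dominating and 2k2-free} established, the lemma delivers the conclusion that $H_6$ is critically $2K_{2}$-exist. The main obstacle is the case enumeration in the uniqueness step: each individual case is elementary, but four external neighborhood types together with four non-edge pairs in $S$ make careful bookkeeping essential.
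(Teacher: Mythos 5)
Your overall strategy is exactly the paper's: verify that $S$ is the unique $2K_{2}$-inducing vertex set and that every edge of $H_6$ is $2K_{2}$-critical for $S$, then apply Lemma~\ref{lemma: almost-dominating and 2k2-free}. The criticality check via Theorem~\ref{Theorem: constructive edge characterization} is sound: every external vertex of $H_6$ has a neighbour in each of the two components of $G[S]$, which supplies the required witness for condition (2). The problem lies in the uniqueness step. You assert that each non-edge pair of $S$ is either entirely contained in, or entirely disjoint from, every external neighbourhood. This is false for $H_6$: with the neighbourhood types you list, a vertex $w$ with $N(w)=\{s,t\}$ meets the non-edge $\{r,t\}$ in exactly the single element $t$ (and the non-edge $\{s,u\}$ in exactly $s$), and analogous one-element intersections occur for the types $\{r,t\}$ and $\{r,s,t\}$. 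So the sufficient condition you propose simply does not hold for this graph, and your argument collapses at precisely the step you identify as the crux.

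The conclusion is nevertheless true, but it needs the finer criterion that the $|T\cap S|=2$ case actually demands: the obstruction is not one external vertex meeting a non-edge $\{a,b\}$ in one element, but \emph{two} external vertices $v_{1},v_{2}$ with $N(v_{1})\cap\{a,b\}=\{a\}$ and $N(v_{2})\cap\{a,b\}=\{b\}$. One then checks that for each of the four non-edges of $S$ only one of the two singletons is ever realized by an external neighbourhood (for $\{r,t\}$ only $\{t\}$ occurs, for $\{r,u\}$ only $\{r\}$, for $\{s,t\}$ only $\{t\}$, for $\{s,u\}$ only $\{s\}$), so no second $2K_{2}$ arises this way. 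You must also dispose of $|T\cap S|=3$ separately, which your ``single question'' does not cover: there the obstruction is an external vertex adjacent to some $c\in S$ but to neither endpoint of the edge of $G[S]$ avoiding $c$; since every external neighbourhood contains $t$ and also meets $\{r,s\}$, this cannot occur. With these corrections the proof goes through and coincides with the paper's, which merely asserts uniqueness and criticality without carrying out the verification.
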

	\begin{proof}
		Graph $H_{6}$ in Figure \ref{Figure: the graphs in EC(2K2)} is isomorphic to a graph $G$ that contains a vertex subset $S=\{r,s,t,u\}$ where $G[S]$ is isomorphic to a $2K_{2}$ and $rs,tu \in E(G)$. Moreover, $V(G)= S \cup W \cup X \cup Y \cup Z$, such that $N(w \in W)=\{s,t\}$, $N(x \in X)=\{s,u\}$, $N(y)=\{s,t,u\}$, $N(z)=\{r,s,t,u\}$, and $|W|,|X|,|Y|,|Z| \geq 0$.
		
		We note that $S$ is the only vertex set inducing $2K_{2}$ in $G$. Moreover, every edge in $E( G )$ is $G[S]$-critical for $S$. Thus, and by Lemma \ref{lemma: almost-dominating and 2k2-free}, $H_{6}$ in Figure \ref{Figure: the graphs in EC(2K2)} is a critically $2K_{2}$-exist.
	\end{proof}
	By Claims \ref{Claim: EC(2K2) H5 is a critical 2k2-exist} and \ref{Claim: EC(2K2) H6 is a critical 2k2-exist}, the proof is complete.
	\renewcommand{\qedsymbol}{$\square$}
\end{proof}

By Theorem \ref{Theorem: characeterization}, Corollary \ref{Corollary: FCC(2K2)}, and Proposition \ref{Proposition: EC(2k2)}, we obtain the following.
\begin{theorem}{\label{Theorem: 2K2-free characterization}}
	Let $G$ be a graph that is non-isomorphic to any graph in Figure \ref{Figure: the graphs in EC(2K2)}. The graph $G$ is $2K_{2}$-free if and only if any $G$-contraction is $2K_{2}$-free.
\end{theorem}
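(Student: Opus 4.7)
The plan is to apply Theorem \ref{Theorem: characeterization} directly with $\mathcal{H} = \{2K_{2}\}$. That theorem says: if $G$ is $\fs(\mathcal{H})$-free and $G$ is not critically $\mathcal{H}$-exist, then $G$ is $\mathcal{H}$-free if and only if any $G$-contraction is $\mathcal{H}$-free. So it suffices to verify both hypotheses for the graph $G$ in the statement.

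For the first hypothesis, I would invoke Corollary \ref{Corollary: FCC(2K2)}, which asserts that there is no $2K_{2}$-free-split graph; that is, $\fs(\{2K_{2}\}) = \emptyset$. Being $\fs(\{2K_{2}\})$-free is therefore vacuous, so every graph (and in particular our $G$) satisfies this condition automatically. For the second hypothesis, I would use Proposition \ref{Proposition: EC(2k2)}: the critically $2K_{2}$-exist graphs are exactly those listed in Figure \ref{Figure: the graphs in EC(2K2)}. Since the statement assumes that $G$ is non-isomorphic to any of these graphs, $G$ is not critically $2K_{2}$-exist.

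With both hypotheses of Theorem \ref{Theorem: characeterization} verified, the equivalence between $G$ being $2K_{2}$-free and every $G$-contraction being $2K_{2}$-free follows immediately. There is no real obstacle here; the work has already been done in Corollary \ref{Corollary: FCC(2K2)} (establishing emptiness of $\fs(\{2K_{2}\})$) and Proposition \ref{Proposition: EC(2k2)} (classifying the critically $2K_{2}$-exist graphs). The present theorem is essentially just the packaging of those two classification results through the general framework of Theorem \ref{Theorem: characeterization}, so the proof reduces to a one-line citation of these three results.
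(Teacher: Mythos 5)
Your proposal matches the paper exactly: the theorem is derived there as an immediate consequence of Theorem \ref{Theorem: characeterization}, Corollary \ref{Corollary: FCC(2K2)} (so that $\fs(\{2K_{2}\})$-freeness holds vacuously), and Proposition \ref{Proposition: EC(2k2)} (so that the hypothesis on Figure \ref{Figure: the graphs in EC(2K2)} rules out the critically $2K_{2}$-exist case). Your verification of the two hypotheses is correct and complete.
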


\section{The \texorpdfstring{$P_{4}$}{P4}-Free Graphs}
By Proposition \ref{pro: no free-split for paths}, the following result follows.
\begin{corollary}{\label{Corollary: FCC(P4)}}
	There is no $P_{4}$-free-split graph.
\end{corollary}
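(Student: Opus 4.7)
The plan is essentially a one-line application of the machinery already developed in Section~\ref{Section: CC(H)}. Recall that by definition a graph is $P_{4}$-free-split exactly when it is both $P_{4}$-free and $P_{4}$-split, and by Theorem~\ref{Theorem: CC=H[]} the class of $P_{4}$-split graphs coincides with $splitting(P_{4})$. Hence it suffices to show that no element of $splitting(P_{4})$ is $P_{4}$-free.

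First I would observe that $P_{4}$ is itself a path. Then I would invoke Proposition~\ref{pro: no free-split for paths} with $G = P_{4}$: the proposition states precisely that if $G$ is a path, then $splitting(G)$ contains no $G$-free-split graph. Applied to $P_{4}$, this yields that $splitting(P_{4})$ contains no $P_{4}$-free-split graph. Combining this with Theorem~\ref{Theorem: CC=H[]} finishes the argument: every $P_{4}$-split graph lies in $splitting(P_{4})$, and none of those graphs is $P_{4}$-free, so no graph can simultaneously be $P_{4}$-split and $P_{4}$-free.

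There is no real obstacle here because the substantive content was absorbed into Proposition~\ref{pro: no free-split for paths}; the corollary is a genuine one-liner. The only thing to be careful about is to state the derivation in the correct logical direction, namely that being in $splitting(P_{4})$ is equivalent to being $P_{4}$-split (via Theorem~\ref{Theorem: CC=H[]}) and then to read off the contrapositive-flavored conclusion from Proposition~\ref{pro: no free-split for paths}. Since the proof is immediate, I would simply write ``By Proposition~\ref{pro: no free-split for paths} applied to $G = P_{4}$ and Theorem~\ref{Theorem: CC=H[]}, the result follows.''
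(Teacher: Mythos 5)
Your proposal is correct and matches the paper exactly: the corollary is stated there as an immediate consequence of Proposition~\ref{pro: no free-split for paths} applied to the path $P_{4}$, with Theorem~\ref{Theorem: CC=H[]} supplying the identification of $P_{4}$-split graphs with $splitting(P_{4})$. Nothing further is needed.
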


\begin{proposition}{\label{Proposition: EC(P4)}}
	The graphs in Figure \ref{Figure: the graphs in EC(P4)} are the only critically $P_{4}$-exist graphs.
\end{proposition}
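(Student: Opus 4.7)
The plan is to adapt the neighborhood-and-contraction case analysis used in the proofs of Propositions \ref{Proposition: EC(claw)} and \ref{Proposition: EC(2k2)}. I fix a critically $P_4$-exist graph $G$ together with a subset $S = \{a,b,c,d\} \subseteq V(G)$ such that $G[S]$ is a $P_4$ with edges $ab$, $bc$, $cd$. By Theorem \ref{Theorem: critical H-exist: not S is independent}, $V(G) - S$ is independent and no vertex of $V(G) - S$ is a corner dominated by a vertex of $S$ in the induced subgraph $G[S \cup \{v\}]$.

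Next, using Corollary \ref{coro: no vertex adajcet to 1 or 2 or 3 or max degree in critical graph}, I restrict the possible neighborhoods $N(v) \subseteq S$ for each $v \in V(G) - S$: $N(v)$ cannot be a singleton, cannot be two adjacent vertices, and cannot be three vertices inducing $P_3$ or $C_3$. The surviving candidates are $\{a,c\}$, $\{a,d\}$, $\{b,d\}$, $\{a,b,d\}$, $\{a,c,d\}$, and $S$ itself. I would also exploit the $a \leftrightarrow d$, $b \leftrightarrow c$ automorphism of $P_4$ to collapse equivalent cases.

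The combinatorial core is a sequence of claims, each either ruling out a configuration of two or three vertices of $V(G) - S$ with specified neighborhoods, or forcing $G$ to be one of the graphs listed in Figure \ref{Figure: the graphs in EC(P4)}. For each putative configuration I exhibit an edge $e$ such that $G/e$ still induces $P_4$ on the image of a carefully chosen 4-subset, contradicting criticality. Among the small surviving graphs I expect $P_4$ itself and $C_5$ (obtained by adding a single vertex with $N(v)=\{a,d\}$); the larger extensions arise by adding independent sets of vertices whose 3-element neighborhoods jointly block every contraction from reproducing a $P_4$. Finally, for each listed graph I verify criticality either by case-checking all contractions directly, or, where $S$ is the unique $P_4$-inducing subset, by showing that every edge is $P_4$-critical for $S$ in the sense of Theorem \ref{Theorem: constructive edge characterization} and that no contraction creates a new $P_4$.

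The main obstacle is the weaker symmetry of $P_4$ compared to the claw or $2K_2$: its automorphism group has only two elements, so more neighborhood types are pairwise inequivalent and the case split is wider. A further subtlety is that several 4-subsets of $V(G)$ may induce $P_4$ at once, so criticality must be verified simultaneously for every such subset, and the right edge to contract in order to expose a surviving $P_4$ must be chosen with care in each case.
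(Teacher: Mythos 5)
Your proposal follows essentially the same route as the paper's proof: fix $S$ inducing $P_{4}$, obtain independence of $V(G)-S$ from Theorem \ref{Theorem: critical H-exist: not S is independent}, restrict the admissible neighborhoods to exactly the six sets you list via Corollary \ref{coro: no vertex adajcet to 1 or 2 or 3 or max degree in critical graph}, eliminate configurations by exhibiting a contraction under which a chosen $4$-set still induces $P_{4}$, and finally verify criticality of the survivors. The one slip is in your guess at the outcome: the arbitrarily large critical graphs ($H_{4}$ and $H_{5}$ of Figure \ref{Figure: the graphs in EC(P4)}) arise from repeating vertices with neighborhood $\{a,c\}$ or with neighborhood $S$, whereas each of the two admissible $3$-element neighborhoods can occur at most once, yielding only the finite graphs $H_{2}$ and $H_{3}$.
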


\begin{figure}[ht!]
	\centering
	\begin{tikzpicture}[hhh/.style={draw=black,circle,inner sep=2pt,minimum size=0.2cm}]
		\begin{scope}[shift={(0,0)}]
			\node 		(h) at (0,0)	 	{$H_{1}$};
			\begin{scope}[shift={(0,0.5)}]
				\node[hhh] (r) at (-1,0) 	{};
				\node[hhh] (s) at (-0.5,0) 	{};
				\node[hhh] (t) at (0,0) 	{};
				\node[hhh] (u) at (0.5,0) 	{};
				\node[hhh] (v) at (-0.25,1) 	{};
				
				\draw (r)--(s)--(t)--(u)--(v)--(r);
			\end{scope}
		\end{scope}
		
		\begin{scope}[shift={(3,0)}]
			\node 		(h) at (0,0)	 	{$H_{2}$};
			\begin{scope}[shift={(0,0.5)}]
				\node[hhh] (r) at (-1,0) 	{};
				\node[hhh] (s) at (-0.5,0) 	{};
				\node[hhh] (t) at (0,0) 	{};
				\node[hhh] (u) at (0.5,0) 	{};
				\node[hhh] (v) at (-0.5,1) 	{};
				
				\draw (r)--(s)--(t)--(u)--(v)--(r) (v)--(s);
			\end{scope}
		\end{scope}
		
		\begin{scope}[shift={(6,0)}]
			\node 		(h) at (0,0)	 	{$H_{3}$};
			\begin{scope}[shift={(0,0.5)}]
				\node[hhh] (r) at (-1,0) 	{};
				\node[hhh] (s) at (-0.5,0) 	{};
				\node[hhh] (t) at (0,0) 	{};
				\node[hhh] (u) at (0.5,0) 	{};
				\node[hhh] (v) at (-0.5,1) 	{};
				\node[hhh] (w) at (0,1) {};
				
				\draw (r)--(s)--(t)--(u)--(v)--(r) (v)--(s) (r)--(w)--(t) (w)--(u);
			\end{scope}
			
		\end{scope}
		
		\begin{scope}[shift={(1,-3)}]
			\node 		(h) at (0,0)	 	{$H_{4}$};
			\begin{scope}[shift={(0.5,0.5)}]
				\node[hhh] (r) at (-2,0) 	{};
				\node[hhh] (s) at (-1,0) 	{};
				\node[hhh] (t) at (0,0) 	{};
				\node[hhh] (u) at (1,0) 	{};
				\node[hhh] (v) at (-1,2) 	{};
				\node[hhh] (w) at (-1,0.8) 	{};
				\node (vdots) at (-1,1.4) 	{$\vdots$};	
				
				\draw (t)--(v)--(r)--(s)--(t)--(u) (r)--(w)--(t);
			\end{scope}
			
		\end{scope}
		
		\begin{scope}[shift={(5,-3)}]
			\node 		(h) at (0,0)	 	{$H_{5}$};
			\begin{scope}[shift={(0.5,0.5)}]
				\node[hhh] (r) at (-2,0) 	{};
				\node[hhh] (s) at (-1,0) 	{};
				\node[hhh] (t) at (0,0) 	{};
				\node[hhh] (u) at (1,0) 	{};
				\node[hhh] (v) at (-0.5,2) 	{};
				\node[hhh] (w) at (-0.5,0.8) 	{};
				\node (vdots) at (-0.5,1.4) 	{$\vdots$};	
				
				\draw (s)--(v)--(r)--(s)--(t)--(u)--(v)--(t) (r)--(w)--(s) (t)--(w)--(u);
			\end{scope}
			
		\end{scope}
	\end{tikzpicture}	
	\caption{The critically $P_{4}$-exist graphs}
	\label{Figure: the graphs in EC(P4)}
\end{figure}
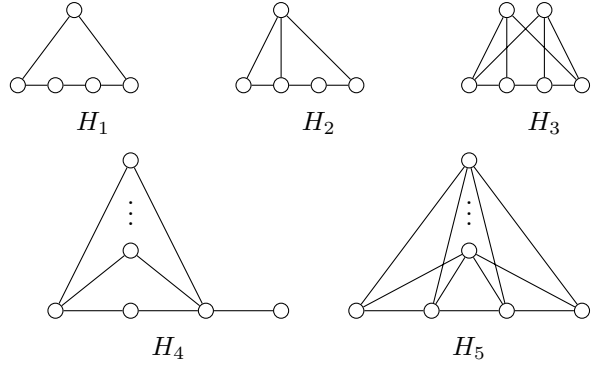

\begin{proof}
	\renewcommand{\qedsymbol}{\claimqed}
	Through this proof, we assume that $G$ is a critically $P_{4}$-exist graph with $S=\{r,s,t,u\}$ such that $G[S]$ is isomorphic to $P_{4}$ where $rs,st,tu \in E(G)$. By Theorem \ref{Theorem: critical H-exist: not S is independent}, $V(G) - S$ is independent. Thus, any vertex in $V(G) - S$ is adjacent to vertices only in $S$. By Corollary \ref{coro: no vertex adajcet to 1 or 2 or 3 or max degree in critical graph}, if $v \in V(G) - S$, then $v$ is nonadjacent to exactly: one vertex, two adjacent vertices, or three vertices inducing a path.

	Let $v,w \in V(G) - S$ such that $v$ is adjacent to exactly the leafs of $S$.
	If $w$ is adjacent to $r$ (or $u$), then in $G/rw$, $f(\{s,t,u,v\})$ induces $P_{4}$, which contradicts the fact that $G$ is a critically $P_{4}$-exist. If $w$ is adjacent to $s$ (or $t$), then in $G/sw$, $f(\{r,t,u,v\})$ induces $P_{4}$, which contradicts the fact that $G$ is a critically $P_{4}$-exist. Thus, if there is a vertex $v \in V(G) - S$ such that $v$ is adjacent to exactly the leaves of the path induced by $S$, then $G$ is isomorphic to $H_{1}$.

	Let $v \in V(G) - S$ such that $|N(v)|=2$.
	If $v$ is adjacent to exactly the leaves of the path induced by $S$, then $G$ is isomorphic to $H_{1}$. Assume that $v$ is nonadjacent to exactly the leaves of the path induced by $S$. Hence, $v$ is exactly adjacent to $\{r,t\}$ (or $\{s,u\}$). Assume that there is a vertex $w \in V(G) - S$ that is not exactly adjacent to $\{r,t\}$. Hence, $w$ is nonadjacent to exactly $\{r,u\}$. If $w$ is adjacent to $s$, then in $G/sw$, $f(\{r,t,u,v\})$ induces $P_{4}$, which contradicts the fact that $G$ is a critically $P_{4}$-exist. Hence, $w$ is adjacent to exactly $\{r,t,u\}$, however, in $G/st$, $f(\{r,u,v,w\})$ induces $P_{4}$, which contradicts the fact that $G$ is a critically $P_{4}$-exist. Thus, if $v \in V(G) - S$ such that $|N(v)|=2$, then $G$ is isomorphic to either $H_{1}$ or a graph in $H_{4}$.

	Let $v,w \in V(G) - S$ such that $|N(v)| = |N(w)|= 3$.
	Assume that $N(v) = N(w)$. W.l.o.g, let $N(v) = \{r,s,u\}$. In $G/sv$, $f(\{r,t,u,w\})$ induces $P_{4}$, which contradicts the fact that $G$ is a critically $P_{4}$-exist. Thus, if $v,w \in V(G) - S$ such that $|N(v)| = |N(w)|= 3$, then $N(v) \not= N(w)$.

	Let $v,w \in V(G) - S$ such that $|N(v)| = 3$.
	W.l.o.g, assume that $N(v) = \{r,s,u\}$. Assume that $N(w) =\{r,s,t,u\}$, however, in $G/sw$, $f(\{r,t,u,v\})$ induces $P_{4}$, which contradicts the fact that $G$ is a critically $P_{4}$-exist. Thus, if $v,w \in V(G) - S$ such that $|N(v)| = 3$, then $|N(w)| \not= 4$.

	Consequently, the possible critically $P_{4}$-exist graphs are those presented in Figure \ref{Figure: the graphs in EC(P4)} whose proofs, of being critically $P_{4}$-exist, are straightforward, which completes the proof.
	\renewcommand{\qedsymbol}{$\square$}
\end{proof}

By Theorem \ref{Theorem: characeterization}, Corollary \ref{Corollary: FCC(P4)}, and Proposition \ref{Proposition: EC(P4)}, we obtain the following.
\begin{theorem}{\label{Theorem: P4-free characterization}}
	Let $G$ be a graph that is non-isomorphic to any graph in Figure \ref{Figure: the graphs in EC(P4)}. The graph $G$ is $P_{4}$-free if and only if any $G$-contraction is $P_{4}$-free.
\end{theorem}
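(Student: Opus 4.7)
The plan is to derive this theorem as an immediate consequence of the general characterization machinery already set up, namely Theorem \ref{Theorem: characeterization}, together with the two facts about $P_{4}$ just established: Corollary \ref{Corollary: FCC(P4)} (no $P_{4}$-free-split graph exists) and Proposition \ref{Proposition: EC(P4)} (the critically $P_{4}$-exist graphs are exactly those drawn in Figure \ref{Figure: the graphs in EC(P4)}). With $\mathcal{H} := \{P_{4}\}$, the recipe of Theorem \ref{Theorem: characeterization} reads: if $G$ is $\fs(\mathcal{H})$-free and not critically $\mathcal{H}$-exist, then $G$ is $\mathcal{H}$-free iff every $G$-contraction is $\mathcal{H}$-free. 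So the only thing to verify is that both hypotheses are automatic under the assumption of the theorem.

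First, I would observe that Corollary \ref{Corollary: FCC(P4)} gives $\fs(\{P_{4}\}) = \emptyset$, and hence every graph $G$ is vacuously $\fs(\{P_{4}\})$-free; no additional assumption is needed on $G$ for this half of the hypothesis. Second, I would invoke Proposition \ref{Proposition: EC(P4)}, which says that the critically $P_{4}$-exist graphs are exactly the graphs $H_{1},\ldots,H_{5}$ of Figure \ref{Figure: the graphs in EC(P4)}. Thus the hypothesis that $G$ is non-isomorphic to any graph in that figure is precisely the statement that $G$ is not critically $P_{4}$-exist.

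Putting the two observations together, any graph $G$ that is non-isomorphic to each graph in Figure \ref{Figure: the graphs in EC(P4)} automatically satisfies both the $\fs(\{P_{4}\})$-freeness and the non-critically-$P_{4}$-exist hypotheses of Theorem \ref{Theorem: characeterization}. Applying that theorem with $\mathcal{H} = \{P_{4}\}$ then yields the desired equivalence.

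Since every step is a direct invocation of results already proved, I do not expect any genuine obstacle; the only care required is bookkeeping, namely making explicit that the $\fs$-hypothesis is vacuous here (so no extra condition on $G$ is being hidden) and that the ``not critically $\mathcal{H}$-exist'' hypothesis is captured exactly by excluding the graphs in Figure \ref{Figure: the graphs in EC(P4)}.
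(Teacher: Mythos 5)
Your proposal is correct and follows exactly the paper's own derivation: the paper likewise obtains this theorem as an immediate consequence of Theorem \ref{Theorem: characeterization} combined with Corollary \ref{Corollary: FCC(P4)} (making the $\fs(\{P_{4}\})$-freeness hypothesis vacuous) and Proposition \ref{Proposition: EC(P4)} (identifying the excluded graphs with the critically $P_{4}$-exist ones). Your explicit bookkeeping of why each hypothesis is satisfied is a welcome elaboration of what the paper leaves implicit.
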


\section{The \texorpdfstring{$C_{4}$}{C4}-Free Graphs}
\begin{proposition}{\label{Proposition: CC(C4)}}
	The graphs in Figure \ref{Figure: the graphs in CC(C4)} are the only $C_{4}$-split graphs.
\end{proposition}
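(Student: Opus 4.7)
The plan is to invoke Theorem \ref{Theorem: CC=H[]}, which tells us that $G$ is $C_{4}$-split if and only if $G\in splitting(C_{4})$, so the task reduces to enumerating $splitting(C_{4})$ up to isomorphism. Since $\mathrm{Aut}(C_{4})$ acts transitively on $V(C_{4})$, the proposition asserting $splitting(G,u)=splitting(G,v)$ for similar vertices $u,v$ lets us fix a single vertex $v$ of $C_{4}$, with neighbourhood $\{a,c\}$ and remaining vertex $b$, and compute only the splittings at $v$.

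There are four ordered partitions $N(v)=U\cup W$, but they collapse to two non-isomorphic splittings. The symmetry that swaps the two new vertices $u,w$ identifies $U=\emptyset$ with $U=\{a,c\}$, and the automorphism of $C_{4}$ exchanging $a$ and $c$ identifies $U=\{a\}$ with $U=\{c\}$. So only two essentially different cases remain to analyse.

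In the first case ($U=\emptyset$, $W=\{a,c\}$), the new vertex $u$ has degree $1$ with neighbour $w$, and $w$ is adjacent to $\{a,c,u\}$; combined with the preserved edges $ab$ and $bc$, this yields the $4$-cycle $w$--$a$--$b$--$c$--$w$ together with a pendant vertex $u$ attached at $w$. In the second case ($U=\{a\}$, $W=\{c\}$), the edge set $\{uw,ua,wc,ab,bc\}$ traces the $5$-cycle $u$--$a$--$b$--$c$--$w$--$u$, so the splitting is $C_{5}$. Matching these two graphs to those in Figure \ref{Figure: the graphs in CC(C4)} finishes the enumeration, and Theorem \ref{Theorem: CC=H[]} converts it into the stated characterisation.

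The only potentially delicate step is verifying that the four partitions really do give exactly two non-isomorphic outputs and nothing more, but this is immediate from the fact that the first output contains a vertex of degree $1$ while the second is $2$-regular. No structural argument beyond Theorem \ref{Theorem: CC=H[]} and the high symmetry of $C_{4}$ is required, so the main obstacle is pure bookkeeping.
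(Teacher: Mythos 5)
There is a genuine gap: you have misread the definition of $splitting(H,v,U,W)$. The paper only requires $N_H(v) = U \cup W$; it does not require $U$ and $W$ to be disjoint. This is essential, because splitting must be the inverse of contraction, and the two endpoints of a contracted edge may have common neighbours. With $N(v)=\{a,c\}$ there are therefore $3^{2}=9$ ordered pairs $(U,W)$ covering $\{a,c\}$ (each of $a,c$ lies in $U$ only, $W$ only, or both), not the $4$ ordered partitions you count. Up to swapping $u\leftrightarrow w$ and the automorphism exchanging $a$ and $c$, this leaves \emph{four} cases, not two: besides your $(\emptyset,\{a,c\})$ and $(\{a\},\{c\})$, you must also treat $(\{a\},\{a,c\})$ and $(\{a,c\},\{a,c\})$. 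The first of these yields $C_{5}$ with one chord and the second yields $C_{5}$ with two chords, and both are genuinely $C_{4}$-split: e.g.\ in the graph with edges $ua,uw,wa,wc,ab,bc$, contracting $uw$ produces a vertex adjacent to exactly $\{a,c\}$, recovering $C_{4}$. Figure \ref{Figure: the graphs in CC(C4)} indeed lists four graphs ($C_5$, $C_5$ plus one chord, $C_5$ plus two chords, and $C_4$ with a pendant vertex), so your enumeration, which produces only two, cannot match it and does not prove the proposition.

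Your degree-$1$/$2$-regular argument for distinguishing outputs also collapses once the overlapping cases are included, since the two missing graphs have neither a degree-$1$ vertex nor are $2$-regular. The overall strategy — reduce to $splitting(C_4)$ via Theorem \ref{Theorem: CC=H[]} and use vertex-transitivity of $C_4$ to split at a single vertex — is the right one and is what the paper's construction intends; the fix is purely to enumerate all covers $U\cup W=N(v)$ rather than only the disjoint ones, after which you obtain exactly the four graphs of the figure. Note also that only $C_5$ among the four is $C_4$-free, which is what feeds Corollary \ref{Corollary: FCC(C4)}; your two-graph list would incorrectly leave that corollary unaffected only by accident.
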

\begin{corollary}{\label{Corollary: FCC(C4)}}
	$C_{5}$ is the only $C_{4}$-free-split graph.
\end{corollary}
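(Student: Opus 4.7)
The plan is to apply Theorem \ref{Theorem: CC=H[]} with $\mathcal{H} = \{C_4\}$, which reduces the problem to enumerating the members of $splitting(C_4)$ and then picking out those that are themselves $C_4$-free. Equivalently, this corollary is exactly the $n=4$ instance of Proposition \ref{pro: only one free-split for cycles}, so the proof is essentially a direct computation of one splitting.

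First I would exploit symmetry. Since $\mathrm{Aut}(C_4)$ acts transitively on $V(C_4)$, the proposition stating $splitting(G,u) = splitting(G,v)$ for similar vertices $u,v$ tells me that $splitting(C_4) = splitting(C_4, v)$ for any single vertex $v$. Writing $V(C_4) = \{v, a, c, b\}$ with $N(v) = \{a, b\}$ and $c$ the vertex opposite $v$, I enumerate the partitions $N(v) = U \cup W$. By Proposition \ref{pro: split graphs that are not free}, any partition with $U = N(v)$ or $W = N(v)$ yields a graph that is not $C_4$-free-split and can be discarded. What remains is $U = \{a\}$, $W = \{b\}$ (the mirror image giving an isomorphic graph).

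Performing this splitting removes $v$ and introduces two adjacent vertices $u, w$ with $N(u) = \{a, w\}$ and $N(w) = \{b, u\}$, while retaining the edges $ac$ and $bc$. Tracing the resulting adjacencies exhibits the 5-cycle $u$--$a$--$c$--$b$--$w$--$u$, so the unique candidate in $splitting(C_4)$ for a $C_4$-free-split graph is $C_5$. I would close the argument with a one-line verification that $C_5$ is indeed $C_4$-free (its only induced cycles are itself), which together with the obvious fact that contracting any edge of $C_5$ yields $C_4$ places $C_5$ in $\fs(C_4)$.

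There is really no obstacle here; the work is entirely bookkeeping on the splittings of a four-vertex graph, and Proposition \ref{pro: split graphs that are not free} eliminates all but one partition automatically. The only subtlety worth flagging is invoking vertex-transitivity to avoid redundantly re-splitting at each of the four vertices of $C_4$.
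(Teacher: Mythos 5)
Your proposal is correct and follows essentially the same route as the paper: the corollary is obtained by computing $splitting(C_{4})$ via Theorem~\ref{Theorem: CC=H[]} (the paper records the four resulting graphs in Proposition~\ref{Proposition: CC(C4)} and then discards the three that contain a $C_{4}$, which is also the content of Proposition~\ref{pro: only one free-split for cycles} for $n=4$). Your only deviation is cosmetic --- you prune the partitions with $U=N(v)$ or $W=N(v)$ up front using Proposition~\ref{pro: split graphs that are not free} instead of listing all four splittings first --- and your bookkeeping (vertex-transitivity of $C_{4}$, the surviving partition $U=\{a\}$, $W=\{b\}$ yielding the $5$-cycle, and the final check that $C_{5}$ is $C_{4}$-free and $C_{4}$-split) is accurate.
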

\begin{figure}[ht!]
	\centering
	\begin{tikzpicture}[hhh/.style={draw=black,circle,inner sep=2pt,minimum size=0.2cm},scale=1.5]
		\def \n {5}
		\def \radius {0.5cm}
		\def \margin {8} % margin in angles, depends on the radius
		
		\begin{scope}[shift={(0,0)}]
			\node 		(h) at (0,-0.8)	 	{$H_{1}$};
			\foreach \s in {1,...,\n}
			{
				\node[hhh] at ({360/\n * (\s - 1)}:\radius) {};
				\draw[ >=latex]  ({360/\n * (\s - 1)+\margin}:\radius)
				arc ({360/\n * (\s - 1)+\margin}:{360/\n * (\s)-\margin}:\radius);
			}
		\end{scope}
		
		\begin{scope}[shift={(1.5,0)}]
			\node 		(h) at (0,-0.8)	 	{$H_{2}$};
			\foreach \s in {1,...,\n}
			{
				\node[hhh] at ({360/\n * (\s - 1)}:\radius) {};
				\draw[ >=latex]  ({360/\n * (\s - 1)+\margin}:\radius)
				arc ({360/\n * (\s - 1)+\margin}:{360/\n * (\s)-\margin}:\radius);
			}
			\draw (0:\radius-2) --(360*2/\n:\radius-2) (360/\n:\radius-2) --(360*3/\n:\radius-2);
		\end{scope}
		
		\begin{scope}[shift={(3,0)}]
			\node 		(h) at (0,-0.8)	 	{$H_{3}$};
			\foreach \s in {1,...,\n}
			{
				\node[hhh] at ({360/\n * (\s - 1)}:\radius) {};
				\draw[ >=latex]  ({360/\n * (\s - 1)+\margin}:\radius)
				arc ({360/\n * (\s - 1)+\margin}:{360/\n * (\s)-\margin}:\radius);
			}
			\draw (0:\radius-2) --(360*2/\n:\radius-2);
		\end{scope}
		
		\begin{scope}[shift={(4.5,0)}]
			\node 		(h) at (0,-0.8)	 	{$H_{4}$};
			\foreach \s in {1,...,\n}
			{
				\node[hhh] at ({360/\n * (\s - 1)}:\radius) {};
				\draw[ >=latex]  ({360/\n * (\s - 1)+\margin}:\radius)
				arc ({360/\n * (\s - 1)+\margin}:{360/\n * (\s)-\margin}:\radius);
			}
			\draw (0:\radius-2) --(360*2/\n:\radius-2);
			\draw[ white, very thick,>=latex]  ({0+\margin}:\radius)
			arc ({0+\margin}:{360/\n * (1)-\margin}:\radius);
		\end{scope}
	\end{tikzpicture}	
	\caption{$C_{4}$-split graphs}
	\label{Figure: the graphs in CC(C4)}
\end{figure}
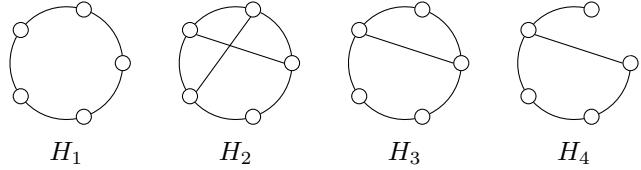

\begin{proposition}{\label{Proposition: EC(C4)}}
	The graphs in Figure \ref{Figure: the graphs in EC(C4)} are the only critically $C_{4}$-exist graphs.
\end{proposition}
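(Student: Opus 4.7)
The plan is to mirror the arguments used for the claw, $2K_{2}$, and $P_{4}$ cases. I would fix a critically $C_{4}$-exist graph $G$ and a vertex set $S = \{r,s,t,u\}$ inducing a $C_{4}$ whose edges are $rs, st, tu, ur$. By Theorem \ref{Theorem: critical H-exist: not S is independent}, the set $V(G)-S$ is independent, so each vertex $v \in V(G)-S$ satisfies $N(v) \subseteq S$. Since every three-vertex subset of a $C_{4}$ induces a $P_{3}$, Corollary \ref{coro: no vertex adajcet to 1 or 2 or 3 or max degree in critical graph} rules out every $N(v)$ of size 1, of size 2 (adjacent pair), or of size 3, leaving only the three possibilities $N(v) = \{r,t\}$, $N(v) = \{s,u\}$, or $N(v) = S$. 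Let $V_{1}, V_{2}, U$ denote the classes of outside vertices with these three neighborhoods, respectively.

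The next step is to show that at most one of $V_{1}, V_{2}, U$ is non-empty. If $v \in V_{1}$ and $w \in V_{2}$, then in $G/vr$ the contracted vertex $x$ inherits $\{s, t, u\}$, and so $\{x, s, w, u\}$ induces a $C_{4}$ (edges $xs, sw, wu, ux$ present, $su$ and $xw$ missing), contradicting criticality. If $v \in V_{1}$ and $u^{*} \in U$, then $\{r, v, t, u^{*}\}$ is another induced $C_{4}$ of $G$; applying Corollary \ref{coro: no vertex adajcet to 1 or 2 or 3 or max degree in critical graph} to this $C_{4}$ and the outside vertex $s$ shows $N_{\{r,v,t,u^{*}\}}(s) = \{r, u^{*}, t\}$, which induces the $P_{3}$ with path $r\text{-}u^{*}\text{-}t$ in $G$, a contradiction. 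The pair $V_{2}, U$ is handled symmetrically.

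Given that only one class can be non-empty, I would finish by a case split. If all three are empty, $G \cong C_{4}$. If only $V_{1}$ (or by symmetry only $V_{2}$) is non-empty, $G$ is the complete bipartite graph $K_{2,\, |V_{1}|+2}$; any edge contraction turns it into a graph of the form $K_{2} \vee \overline{K_{n-1}}$, which contains no induced $C_{4}$ because every $4$-subset that excludes one of the two dominating vertices induces a star, while every $4$-subset that includes both induces $K_{4}-e$. If only $U$ is non-empty, I would show $|U| \leq 2$: should $u_{1}, u_{2}, u_{3}$ all lie in $U$, then in $G/u_{1}r$ the vertices $s, u_{2}, u, u_{3}$ induce a $C_{4}$ (the non-edges $su$ and $u_{2}u_{3}$ both survive the contraction), a contradiction. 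For $|U| = 0, 1, 2$ we obtain $C_{4}$, the wheel $W_{4} = C_{4} \vee K_{1}$, and the octahedron $K_{2,2,2}$; each is readily checked to be critically $C_{4}$-exist since every edge contraction yields $K_{5}$ minus at most one edge, which is $C_{4}$-free.

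I expect the main obstacle, as in the $2K_{2}$ proof, to be the sufficient direction: verifying for every graph in the figure that all its contractions are $C_{4}$-free. For the $K_{2,n}$ family this is routine by the $K_{2} \vee \overline{K_{n-1}}$ reduction above; the trickier verifications are for $W_{4}$ and $K_{2,2,2}$, where one has to track the two or three non-edges carefully and confirm that no resulting $4$-vertex induced subgraph is a bare $4$-cycle. I would bundle these into short claims, as was done for the $H_{5}$ and $H_{6}$ graphs in the $2K_{2}$ proof, possibly via an application of Lemma \ref{lemma: almost-dominating and 2k2-free}'s style of argument adapted to $C_{4}$.
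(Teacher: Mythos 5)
Your proposal is correct and follows essentially the same route as the paper: the same use of Theorem \ref{Theorem: critical H-exist: not S is independent} and Corollary \ref{coro: no vertex adajcet to 1 or 2 or 3 or max degree in critical graph} to reduce every outside neighbourhood to $\{r,t\}$, $\{s,u\}$, or all of $S$, the same exclusion of mixed classes and of three universal vertices, and the same final list $K_{2,n}$, $W_{4}$, $K_{2,2,2}$ (your verification of the converse is in fact more explicit than the paper's, which declares it straightforward). One cosmetic slip: contractions of $W_{4}$ give $K_{4}$ minus at most one edge, not $K_{5}$ minus one edge, but the conclusion that they are $C_{4}$-free is unaffected.
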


\begin{figure}[ht!]
	\centering
	\begin{tikzpicture}[hhh/.style={draw=black,circle,inner sep=2pt,minimum size=0.2cm}]
		\begin{scope}[shift={(-3,0)}]
			\node 		(h) at (0,0)	 	{$H_{1}$};
			\begin{scope}[shift={(0,1.3)}]
				\def \n {4}
				\def \radius {0.7cm}
				\def \radiusCorrect {3}
				\def \margin {10} % margin in angles, depends on the radius
				\def \rotate {0}	% to rotate the cycle
				
				\node[hhh]  (e) at (-1.5,0)		{};
				\node[hhh]  (f) at (-2.5,0)		{};
				\node[text centered]  (g) at (-1.9,0)		{$\dots$};
				\draw ({360/\n * (2 - 1)+\rotate}:\radius)--(e)--({360/\n * (4 - 1)+\rotate}:\radius)
				({360/\n * (2 - 1)+\rotate}:\radius)--(f)--({360/\n * (4 - 1)+\rotate}:\radius);
				\foreach \s in {1,...,\n}
				{
					\node[hhh,fill=white] at ({360/\n * (\s - 1)+\rotate}:\radius) {};
					\draw[ >=latex]  ({360/\n * (\s - 1)+\margin + \rotate}:\radius)
					arc ({360/\n * (\s - 1)+\margin+\rotate}:{360/\n * (\s)-\margin + \rotate}:\radius);
				}
			\end{scope}
		\end{scope}
		
		\begin{scope}[shift={(0,0)}]
			\node 		(h) at (0,0)	 	{$H_{2}$};
			\begin{scope}[shift={(0,1.3)}]
				\def \n {4}
				\def \radius {0.7cm}
				\def \radiusCorrect {3}
				\def \margin {10} % margin in angles, depends on the radius
				\def \rotate {45}	% to rotate the cycle
				
				\foreach \s in {1,...,\n}
				{
					\node[hhh] at ({360/\n * (\s - 1)+\rotate}:\radius) {};
					\draw[ >=latex]  ({360/\n * (\s - 1)+\margin + \rotate}:\radius)
					arc ({360/\n * (\s - 1)+\margin+\rotate}:{360/\n * (\s)-\margin + \rotate}:\radius);
				}
				\node[hhh]  (e) at (0,0)		{};
				\draw ({360/\n * (1 - 1)+\rotate}:\radius-\radiusCorrect)--(e)--({360/\n * (2 - 1)+\rotate}:\radius-\radiusCorrect)
				({360/\n * (3 - 1)+\rotate}:\radius-\radiusCorrect)--(e)--({360/\n * (4 - 1)+\rotate}:\radius-\radiusCorrect);
			\end{scope}
		\end{scope}
		
		\begin{scope}[shift={(3,0)}]
			\node 		(h) at (0,0)	 	{$H_{3}$};
			\begin{scope}[shift={(0,1.3)}]
				\def \n {6}
				\def \radius {0.7cm}
				\def \radiusCorrect {3}
				\def \margin {10} % margin in angles, depends on the radius
				\def \rotate {0}	% to rotate the cycle
				
				\foreach \s in {1,...,\n}
				{
					\node[hhh] at ({360/\n * (\s - 1)+\rotate}:\radius) {};
					\draw[ >=latex]  ({360/\n * (\s - 1)+\margin + \rotate}:\radius)
					arc ({360/\n * (\s - 1)+\margin+\rotate}:{360/\n * (\s)-\margin + \rotate}:\radius);
				}
				\draw ({360/\n * (3 - 1)+\rotate}:\radius-\radiusCorrect)--({360/\n * (0)+\rotate}:\radius-\radiusCorrect)
				({360/\n * (5 - 1)+\rotate}:\radius-\radiusCorrect)--({360/\n * (0)+\rotate}:\radius-\radiusCorrect)
				({360/\n * (2 - 1)+\rotate}:\radius-\radiusCorrect)--({360/\n * (4-1)+\rotate}:\radius-\radiusCorrect)
				({360/\n * (6 - 1)+\rotate}:\radius-\radiusCorrect)--({360/\n * (4-1)+\rotate}:\radius-\radiusCorrect)
				({360/\n * (2 - 1)+\rotate}:\radius-\radiusCorrect)--({360/\n * (6-1)+\rotate}:\radius-\radiusCorrect)
				({360/\n * (3 - 1)+\rotate}:\radius-\radiusCorrect)--({360/\n * (5-1)+\rotate}:\radius-\radiusCorrect);
			\end{scope}
		\end{scope}
	\end{tikzpicture}	
	\caption{Critically $C_{4}$-exist graphs}
	\label{Figure: the graphs in EC(C4)}
\end{figure}
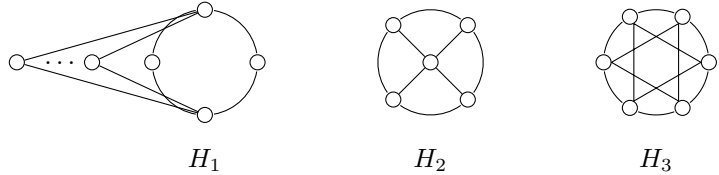

\begin{proof}
	\renewcommand{\qedsymbol}{\claimqed}
	Through this proof, we assume that $G$ is a critically $C_{4}$-exist graph with $S=\{r,s,t,u\}$ such that $G[S]$ is isomorphic to $C_{4}$ where $r$ and $t$ are adjacent to both $s$ and $u$. By Theorem \ref{Theorem: critical H-exist: not S is independent}, we note that $V(G) - S$ is independent. Thus, any vertex in $V(G) - S$ is adjacent to vertices only in $S$. By Corollary \ref{coro: no vertex adajcet to 1 or 2 or 3 or max degree in critical graph}, if $v \in V(G) - S$, then $v$ is nonadjacent to exactly: one vertex, two adjacent vertices, or three vertices.

	Let $v \in V(G) - S$ such that $|N(v)| =2$.	W.l.o.g, assume that $N(v) =\{r,t\}$. Let $w \in V(G)-S$, however, if $w$ is adjacent to $s$ (or $u$), then in $G/sw$, $f(\{r,t,u,v\})$ induces $C_{4}$, which contradicts the fact that $G$ is a critically $C_{4}$-exist. Thus, if $v \in V(G) - S$ such that $|N(v)| =2$, then $G$ is isomorphic to a graph in $H_{1}$.
	
	Let $v,w,x \in V(G) - S$ such that $|N(v)| = |N(w)| = 4$.
	Hence, $|N(x)| = 4$, however, in $G/rv$, $f(\{s,u,w,x\})$ induces $C_{4}$, which contradicts the fact that $G$ is a critically $C_{4}$-exist. Thus, if there is a vertex outside $S$ that is adjacent to every vertex in $S$ then $G$ is isomorphic to either $H_{2}$ or $H_{3}$.
	% $v,w \in V(G) - S$ such that $|N(v)| =|N(w)|=4$, then $G$ is isomorphic to $H_{3}$.
	
	Consequently, the possible critically $C_{4}$-exist graphs are those presented in Figure \ref{Figure: the graphs in EC(C4)} whose proofs, of being critically $C_{4}$-exist, are straightforward, which completes the proof.
	\renewcommand{\qedsymbol}{$\square$}
\end{proof}

By Theorem \ref{Theorem: characeterization}, Corollary \ref{Corollary: FCC(C4)}, and Proposition \ref{Proposition: EC(C4)}, we obtain the following.
\begin{theorem}{\label{Theorem: C4-free characterization}}
	Let $G$ be a $C_{5}$-free graph that is non-isomorphic to any graph in Figure \ref{Figure: the graphs in EC(C4)}. The graph $G$ is $C_{4}$-free if and only if any $G$-contraction is $C_{4}$-free.
\end{theorem}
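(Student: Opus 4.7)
The plan is to specialize Theorem \ref{Theorem: characeterization} to the singleton family $\mathcal{H}=\{C_4\}$ and then substitute the concrete descriptions of the relevant auxiliary classes that were established just above. Concretely, I would begin by recalling that Theorem \ref{Theorem: characeterization} asserts: for any set $\mathcal{H}$ and any $\fs(\mathcal{H})$-free graph $G$ that is not critically $\mathcal{H}$-exist, being $\mathcal{H}$-free is equivalent to every $G$-contraction being $\mathcal{H}$-free. Hence it suffices to rewrite its two hypotheses in the language of the present section.

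First, I would invoke Corollary \ref{Corollary: FCC(C4)}, which identifies $C_5$ as the unique $C_4$-free-split graph, so $\fs(\{C_4\})=\{C_5\}$. Therefore the assumption \emph{$G$ is $\fs(\{C_4\})$-free} is exactly the assumption \emph{$G$ is $C_5$-free}, which matches the hypothesis in the theorem statement. Second, I would invoke Proposition \ref{Proposition: EC(C4)}, which enumerates the critically $C_4$-exist graphs as precisely the graphs in Figure \ref{Figure: the graphs in EC(C4)}. Thus the assumption \emph{$G$ is not critically $\{C_4\}$-exist} translates verbatim into the assumption \emph{$G$ is non-isomorphic to any graph in Figure \ref{Figure: the graphs in EC(C4)}}.

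With these two substitutions the hypotheses of Theorem \ref{Theorem: characeterization} become exactly the hypotheses of the present theorem, and its conclusion reads exactly as desired. There is no real obstacle in this argument: both nontrivial ingredients (the classification of $C_4$-free-split graphs and of critically $C_4$-exist graphs) are already in place, so the proof amounts to a one-line citation of the three prior results and a rewording. If any step deserves attention, it is simply verifying that the two reformulations are the only logically possible failure modes of the \emph{if and only if}, which is immediate from the definitions of \emph{strongly $\mathcal{H}$-free} and \emph{critically $\mathcal{H}$-exist} together with Proposition \ref{Theore: forbidden}.
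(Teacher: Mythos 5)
Your proposal is correct and matches the paper exactly: the paper derives this theorem as an immediate consequence of Theorem \ref{Theorem: characeterization}, Corollary \ref{Corollary: FCC(C4)}, and Proposition \ref{Proposition: EC(C4)}, which is precisely the specialization and substitution you describe.
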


\section{The \texorpdfstring{$C_{5}$}{C5}-Free Graphs}
\begin{proposition}{\label{Proposition: CC(C5)}}
	The graphs in Figure \ref{Figure: the graphs in CC(C5)} are the only $C_{5}$-split graphs.
\end{proposition}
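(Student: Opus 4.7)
The plan is to invoke Theorem \ref{Theorem: CC=H[]}, which says that $G$ is $C_{5}$-split if and only if $G \in splitting(C_{5})$, reducing the claim to enumerating $splitting(C_{5})$ up to isomorphism. Since $C_{5}$ is vertex-transitive, all of its vertices are pairwise similar, so by the similarity proposition every $splitting(C_{5},v)$ agrees; hence it suffices to fix a single vertex $v$ and enumerate $splitting(C_{5},v,U,W)$ over all pairs $(U,W)$ with $U\cup W = N(v)$.

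Fix $v \in V(C_{5})$ with $N(v) = \{a,b\}$, and note that $a$ and $b$ are non-adjacent in $C_{5}$. Each of $a,b$ lies in exactly one of $U\setminus W$, $W\setminus U$, $U\cap W$, yielding nine pairs $(U,W)$. The reflection of $C_{5}$ that fixes $v$ interchanges $a$ and $b$, and the swap $U\leftrightarrow W$ corresponds to relabeling the two new vertices $u,w$; modulo these two symmetries the nine choices collapse to four orbits:
(i) both neighbors in $U$ only;
(ii) one neighbor in $U$ only and the other in $W$ only;
(iii) one in $U$ only and the other in $U\cap W$;
(iv) both in $U\cap W$.
Next I would write out the resulting graph in each case from the definition of splitting, combined with the three edges of $C_{5}$ not incident to $v$. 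Case (i) gives $C_{5}$ with a pendant edge, case (ii) gives $C_{6}$, case (iii) gives $C_{6}$ plus one chord joining two vertices at cycle-distance two, and case (iv) gives $C_{6}$ plus two such chords sharing no endpoint.

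These four graphs have $6,6,7,8$ edges respectively, and case (i) contains a vertex of degree one while case (ii) is $2$-regular, so they are pairwise non-isomorphic. This exhausts the orbits and therefore matches the list in Figure \ref{Figure: the graphs in CC(C5)}. The only subtle point is remembering that the definition of splitting permits $U\cap W\neq\emptyset$, so a neighbor of $v$ may be assigned to both new vertices; overlooking this third option would miss cases (iii) and (iv). Once the three-way assignment is in place, the symmetry reduction to four orbits and the translation of each orbit into an explicit graph are routine.
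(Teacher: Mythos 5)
Your proof is correct and follows exactly the route the paper intends: Theorem \ref{Theorem: CC=H[]} reduces the claim to enumerating $splitting(C_{5})$, vertex-transitivity of $C_{5}$ lets you fix one vertex, and the three-way assignment of each neighbour to $U\setminus W$, $W\setminus U$, or $U\cap W$ (modulo the two symmetries) yields precisely the four graphs of Figure \ref{Figure: the graphs in CC(C5)}. The paper leaves this verification implicit, and your handling of the non-disjoint case $U\cap W\neq\emptyset$ is the one point that genuinely needed care.
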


\begin{figure}[!ht]
	\centering
	\begin{tikzpicture}[hhh/.style={draw=black,circle,inner sep=1.5pt,minimum size=0.15cm},scale=1.3]
		\def \n {6}
		\def \radius {0.4cm}
		\def \margin {8} % margin in angles, depends on the radius
		
		\begin{scope}[shift={(0,0)}]
			\node 		(h) at (0,-0.7)	 	{$H_{1}$};
			\foreach \s in {1,...,\n}
			{
				\node[hhh] at ({360/\n * (\s - 1)}:\radius) {};
				\draw[ >=latex]  ({360/\n * (\s - 1)+\margin}:\radius)
				arc ({360/\n * (\s - 1)+\margin}:{360/\n * (\s)-\margin}:\radius);
			}
		\end{scope}
		
		\begin{scope}[shift={(1.5,0)}]
			\node 		(h) at (0,-0.7)	 	{$H_{2}$};
			\foreach \s in {1,...,\n}
			{
				\node[hhh] at ({360/\n * (\s - 1)}:\radius) {};
				\draw[ >=latex]  ({360/\n * (\s - 1)+\margin}:\radius)
				arc ({360/\n * (\s - 1)+\margin}:{360/\n * (\s)-\margin}:\radius);
			}
			\draw (0:\radius-2) --(360*2/\n:\radius-2) (360/\n:\radius-2) --(360*3/\n:\radius-2);
		\end{scope}
		
		\begin{scope}[shift={(3,0)}]
			\node 		(h) at (0,-0.7)	 	{$H_{3}$};
			\foreach \s in {1,...,\n}
			{
				\node[hhh] at ({360/\n * (\s - 1)}:\radius) {};
				\draw[ >=latex]  ({360/\n * (\s - 1)+\margin}:\radius)
				arc ({360/\n * (\s - 1)+\margin}:{360/\n * (\s)-\margin}:\radius);
			}
			\draw (0:\radius-2) --(360*2/\n:\radius-2);
		\end{scope}
		
		\begin{scope}[shift={(4.5,0)}]
			\node 		(h) at (0,-0.7)	 	{$H_{4}$};
			\foreach \s in {1,...,\n}
			{
				\node[hhh] at ({360/\n * (\s - 1)}:\radius) {};
				\draw[ >=latex]  ({360/\n * (\s - 1)+\margin}:\radius)
				arc ({360/\n * (\s - 1)+\margin}:{360/\n * (\s)-\margin}:\radius);
			}
			\draw (0:\radius-2) --(360*2/\n:\radius-2);
			\draw[ white, very thick,>=latex]  ({0+\margin}:\radius)
			arc ({0+\margin}:{360/\n * (1)-\margin}:\radius);
		\end{scope}
	\end{tikzpicture}	
	\caption{$C_{5}$-split graphs}
	\label{Figure: the graphs in CC(C5)}
\end{figure}
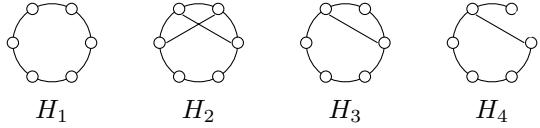

\begin{corollary}{\label{Corollary: FCC(C5)}}
	$C_{6}$ is the only $C_{5}$-free-split graph.
\end{corollary}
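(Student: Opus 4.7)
The plan is to read off the result directly from Proposition \ref{Proposition: CC(C5)}, which already enumerates every $C_{5}$-split graph as one of $H_{1},\dots,H_{4}$ in Figure \ref{Figure: the graphs in CC(C5)}. Since a $C_{5}$-free-split graph is by definition both $C_{5}$-split and $C_{5}$-free, it suffices to decide which of these four candidates are $C_{5}$-free. This turns a characterization problem into a finite, purely case-based check on four explicit six-vertex graphs.

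First I would observe that $H_{1}\cong C_{6}$, and that $C_{6}$ is trivially $C_{5}$-free because a cycle contains no induced cycle of smaller length. This establishes that $C_{6}$ is a $C_{5}$-free-split graph (alternatively, this is an immediate instance of Proposition \ref{pro: only one free-split for cycles} applied with $n=5$).

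Next I would rule out $H_{2}$, $H_{3}$, and $H_{4}$ by exhibiting an induced $C_{5}$ in each. Reading Figure \ref{Figure: the graphs in CC(C5)}, label the outer vertices of the hexagon $0,1,2,3,4,5$ consecutively. Then $H_{3}$ is the hexagon plus the chord $02$, and the set $\{0,2,3,4,5\}$ induces the cycle $0\text{-}2\text{-}3\text{-}4\text{-}5\text{-}0$. The same vertex set works in $H_{4}$, which is obtained from $H_{3}$ by deleting the hexagon edge $01$ (the excluded edge involves the vertex $1$, which is outside $\{0,2,3,4,5\}$). For $H_{2}$, which adds the further chord $13$, I would use $\{0,1,3,4,5\}$: its induced edges are $01,13,34,45,50$, forming an induced $5$-cycle. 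Hence each of $H_{2},H_{3},H_{4}$ contains an induced $C_{5}$ and is therefore not $C_{5}$-free.

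Combining these two steps, the only graph in Figure \ref{Figure: the graphs in CC(C5)} that is $C_{5}$-free is $H_{1}\cong C_{6}$, so $C_{6}$ is the unique $C_{5}$-free-split graph. The only real obstacle is parsing the pictures correctly to identify the chord configurations of each $H_{i}$; once the edge sets are pinned down, exhibiting the induced $C_{5}$ in each non-$C_{6}$ candidate is routine.
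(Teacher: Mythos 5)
Your proposal is correct and matches the paper's intended derivation: the corollary is stated as an immediate consequence of Proposition \ref{Proposition: CC(C5)} (and of Proposition \ref{pro: only one free-split for cycles} with $n=5$), and the finite check that $H_{1}\cong C_{6}$ is $C_{5}$-free while $H_{2},H_{3},H_{4}$ each contain an induced $C_{5}$ is exactly the omitted verification. Your exhibited induced $5$-cycles are all valid.
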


\begin{proposition}{\label{Proposition: EC(C5)}}
	The graphs in Figure \ref{Figure: the graphs in EC(C5)} are the only critically $C_{5}$-exist graphs. 
\end{proposition}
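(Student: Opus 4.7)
The proof should mirror the template of Propositions \ref{Proposition: EC(claw)}, \ref{Proposition: EC(2k2)}, \ref{Proposition: EC(P4)}, and \ref{Proposition: EC(C4)}. Fix an induced $C_5$ on $S = \{r,s,t,u,w\}$ with cyclic edges $rs, st, tu, uw, wr$. Theorem \ref{Theorem: critical H-exist: not S is independent} forces $V(G) \setminus S$ to be independent, so every outside vertex has its neighbourhood entirely inside $S$. Corollary \ref{coro: no vertex adajcet to 1 or 2 or 3 or max degree in critical graph} then rules out $|N(v)| = 1$ and forbids $N(v)$ from consisting of two adjacent vertices or three vertices inducing $P_3$ or $C_3$. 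Inside $C_5$ this translates into concrete shape constraints: a $2$-neighbourhood must be a distance-two pair along the cycle, and a $3$-neighbourhood must induce $K_2 \cup K_1$, which in $C_5$ is forced to be an edge together with its antipodal vertex.

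The core argument is then a sequence of clash claims in the spirit of Claims \ref{Claim: EC(2K2) different non intersecting v2}--\ref{Claim: EC(2K2) no v3,v3,nv2}, each taking a hypothetical coexistence of two outside-vertex neighbourhood types and producing an explicit edge $e$ whose contraction still leaves an induced $C_5$, thereby contradicting criticality. A representative computation: if $v_1, v_2 \in V(G)\setminus S$ satisfy $N(v_1) = \{r,t\}$ and $N(v_2) = \{s,u\}$, then contracting the edge $rv_1$ produces a merged vertex $z$ with $N(z) = \{s,t,w\}$, and the five-set $\{z, s, v_2, u, w\}$ induces $C_5$ via the edges $zs, sv_2, v_2u, uw, wz$. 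Analogous contractions dispatch the other forbidden coexistences: two $2$-neighbourhoods sharing exactly one vertex of $S$, a $2$-neighbourhood with a $3$-neighbourhood whose ``antipode'' is misaligned with it, two $3$-neighbourhoods in incompatible positions, and any $4$-neighbourhood sitting alongside most other configurations. The dihedral symmetry $D_5 = \mathrm{Aut}(C_5)$ cuts the number of representative subcases drastically, since many a priori distinct label patterns are identified under rotation and reflection of the cycle.

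After these clashes have been discharged, the surviving neighbourhood configurations of $V(G) \setminus S$ coincide with the graphs pictured in Figure \ref{Figure: the graphs in EC(C5)}. It then remains to confirm that each listed graph $G_i$ is indeed critically $C_5$-exist. This is a finite verification: for one edge from each orbit of $\mathrm{Aut}(G_i)$, contract it and check that no five-vertex subset of the resulting graph induces $C_5$. In each case $S$ is the unique induced $C_5$ of $G_i$, so the contraction either breaks $S$ directly (when the edge lies in $G[S]$) or replaces an $S$-vertex by a merged vertex whose degree in $f(S)$ exceeds two, and the handful of remaining five-sets is dispatched by a direct adjacency count.

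The main obstacle is the bookkeeping in the clash phase. Because $C_5$ carries the full dihedral symmetry, several superficially different configurations of outside-vertex neighbourhoods are equivalent under $\mathrm{Aut}(C_5)$ and must be identified before enumeration; the longer distance structure of $C_5$ (as opposed to $C_3$ or $C_4$) also makes each adjacency check for the new five-set more delicate than in the preceding sections, since the cycle can re-form using one merged vertex together with $S$-vertices and one or two outside vertices of the right neighbourhood type. A disciplined, rotation-and-reflection-based case split is what keeps the argument finite and tractable.
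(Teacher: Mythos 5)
Your overall template (independence of $V(G)\setminus S$, the shape constraints from Corollary \ref{coro: no vertex adajcet to 1 or 2 or 3 or max degree in critical graph}, a clash phase, then a verification phase) is the same as the paper's, and your representative computation for $N(v_1)=\{r,t\}$, $N(v_2)=\{s,u\}$ is correct. But your list of forbidden coexistences contains a false item that would wreck the enumeration: ``two $2$-neighbourhoods sharing exactly one vertex of $S$'' is \emph{not} forbidden. Graph $H_4$ of Figure \ref{Figure: the graphs in EC(C5)} has outside vertices with neighbourhoods $\{r,t\}$ and $\{r,u\}$, which are distance-two pairs sharing exactly the vertex $r$, and $H_4$ is one of the critically $C_5$-exist graphs you are supposed to end up with. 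The correct clash (the paper's Claim \ref{Claim: EC(C5) neighborhood of lenght 2, then no one is adajcent to the vertex in between}) is that once some outside vertex has $N(w)=\{r,t\}$, no other outside vertex may be adjacent to the midpoint $s$ of that pair; this kills exactly the disjoint pair you computed (since $s\in\{s,u\}$) while permitting the overlapping pairs $\{r,t\},\{r,u\}$ and $\{r,t\},\{t,v\}$. You have the allowed and forbidden cases essentially reversed, so carrying out your plan would erase $H_4$ (and parts of $H_5$, $H_6$) from the list.

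The verification phase has a second gap. You assert that ``in each case $S$ is the unique induced $C_5$ of $G_i$,'' but this is false for most of the listed graphs: $H_1$ has three induced $C_5$'s ($S$, $\{r,s,u,w,x\}$, $\{r,t,v,w,x\}$), and $H_4$ has $(|W|+1)(|X|+1)$ of them. Moreover $H_2$--$H_6$ are infinite families (the classes $W,X,Y,\dots$ have unbounded size), so ``contract one edge per $\mathrm{Aut}$-orbit and check all five-sets'' is not a finite verification; one needs, as the paper does, a structural identification of \emph{all} induced $C_5$'s in a generic member of each family followed by a check that every edge is $C_5$-critical for every one of them. Finally, that reduction itself requires justification: an induced $C_5$ in $G/e$ may arise from a six-vertex preimage containing both endpoints of $e$, not only from an induced $C_5$ of $G$ surviving the contraction. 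The paper closes this hole by first proving (Claim \ref{Claim: No C6}) that the listed graphs are $C_6$-free and invoking Corollary \ref{Corollary: FCC(C5)}; your sketch omits this step entirely.
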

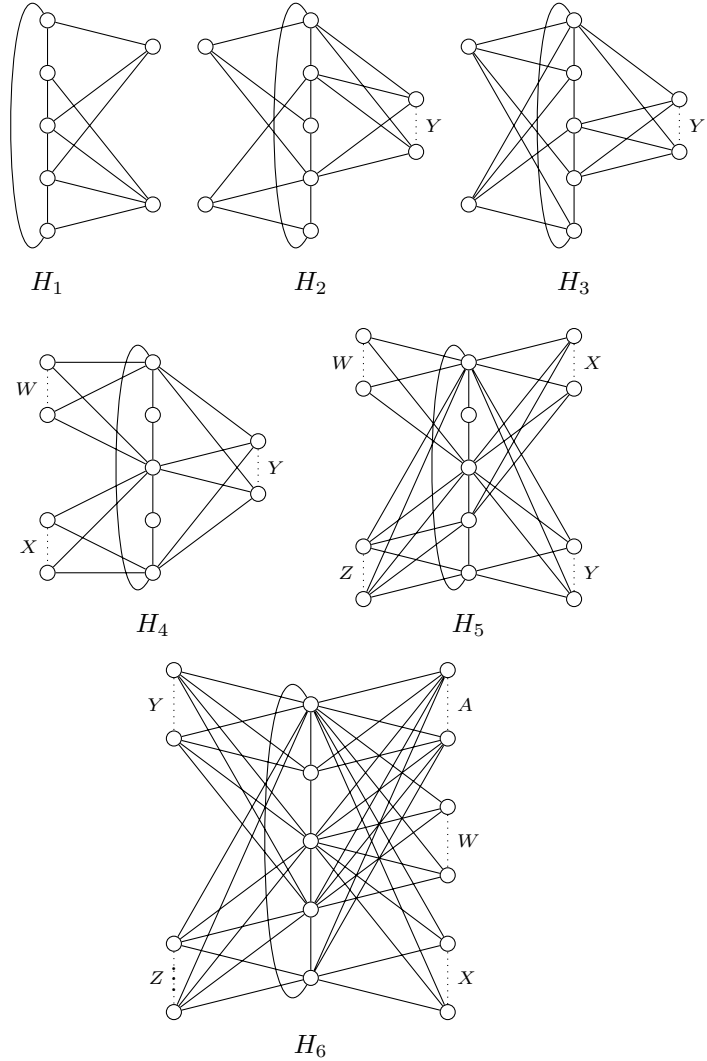
\begin{figure}[!ht]
	\centering
	\begin{tikzpicture}[hhh/.style={draw=black,circle,inner sep=2pt,minimum size=0.2cm},scale=0.7]
		\begin{scope}[shift={(0,0)}]
			\node 	   (label) at (0,0) 		{$H_{1}$};
			\node[hhh] (v) at (0,1) 	{};
			\node[hhh] (u) at (0,2) 	{};
			\node[hhh] (t) at (0,3) 	{};
			\node[hhh] (s) at (0,4) 	{};
			\node[hhh] (r) at (0,5) 	{};
			
			\node[hhh] (v3) at (2,4.5) 	{};
			\node[hhh] (v4) at (2,1.5) 	{};

			\draw (r) -- (s) -- (t) -- (u) -- (v)
			
			(r) --(v3) -- (t) (u) --(v3)
			(s) --(v4) -- (t) (u) --(v4) --(v)
			
			(r) to[in=-120,out=120]  (v);
		\end{scope}

		\begin{scope}[shift={(5,0)}]
			\node 	   (label) at (0,0) 		{$H_{2}$};
			\node[hhh] (r) at (0,1) 	{};
			\node[hhh] (s) at (0,2) 	{};
			\node[hhh] (t) at (0,3) 	{};
			\node[hhh] (u) at (0,4) 	{};
			\node[hhh] (v) at (0,5) 	{};
			
			\node[hhh] (v41) at (-2,4.5) 	{};
			\node[hhh] (v42) at (-2,1.5) 	{};
			
			\node[hhh] (v31) at (2,3.5) 	{};
			\node[hhh] (v32) at (2,2.5) 	{};
			%\node (v3dots) at (2,3.1) 	{$\vdots$};	
			\draw[dotted] (v31) to node[right]{\scriptsize $Y$} (v32);		
			
			\draw (r) -- (s) -- (t) -- (u) -- (v)
			
			(v) --(v41) -- (t) (s) --(v41)
			(u) --(v42) -- (s) (r) --(v42)
			
			(s) --(v31) -- (v) (u) --(v31)
			(s) --(v32) -- (v) (u) --(v32)
			
			(v) to[in=-120,out=120]  (r);
		\end{scope}

		\begin{scope}[shift={(10,0)}]
			\node 	   (label) at (0,0) 		{$H_{3}$};
			\node[hhh] (r) at (0,1) 	{};
			\node[hhh] (s) at (0,2) 	{};
			\node[hhh] (t) at (0,3) 	{};
			\node[hhh] (u) at (0,4) 	{};
			\node[hhh] (v) at (0,5) 	{};
			
			\node[hhh] (v41) at (-2,4.5) 	{};
			\node[hhh] (v42) at (-2,1.5) 	{};
			
			\node[hhh] (v31) at (2,3.5) 	{};
			\node[hhh] (v32) at (2,2.5) 	{};
			%\node (v3dots) at (2,3.1) 	{$\vdots$};	
			\draw[dotted] (v31) to node[right]{\scriptsize $Y$} (v32);			
			
			\draw (r) -- (s) -- (t) -- (u) -- (v)
			
			(s) --(v41) -- (r) (u) --(v41) -- (v) 
			(r) --(v42) -- (u) (v) --(v42) -- (t)
			
			(v) --(v31) -- (t) (s) --(v31)
			(v) --(v32) -- (t) (s) --(v32)
			
			(v) to[in=-120,out=120]  (r);
		\end{scope}
		
		\begin{scope}[shift={(2,-6.5)}]
			\node 	   (label) at (0,0) 		{$H_{4}$};
			\node[hhh] (r) at (0,1) 	{};
			\node[hhh] (s) at (0,2) 	{};
			\node[hhh] (t) at (0,3) 	{};
			\node[hhh] (u) at (0,4) 	{};
			\node[hhh] (v) at (0,5) 	{};
			
			\node[hhh] (v211) at (-2,5) 	{};
			\node[hhh] (v212) at (-2,4) 	{};
			%	\node (v2dots) at (-2,4.6) 	{$\vdots$};
			\draw[dotted] (v211) to node[left]{\scriptsize $W$} (v212);
			
			\node[hhh] (v221) at (-2,2) 	{};
			\node[hhh] (v222) at (-2,1) 	{};
			%	\node (v2dots) at (-2,1.6) 	{$\vdots$};
			\draw[dotted] (v221) to node[left]{\scriptsize $X$} (v222);
			
			\node[hhh] (v31) at (2,3.5) 	{};
			\node[hhh] (v32) at (2,2.5) 	{};
			%	\node (v3dots) at (2,3.1) 	{$\vdots$};
			\draw[dotted] (v31) to node[right]{\scriptsize $Y$} (v32);			
			
			\draw (r) -- (s) -- (t) -- (u) -- (v)
			
			(v) --(v211) -- (t) (v) --(v212) -- (t) 
			(t) --(v221) -- (r) (t) --(v222) -- (r)
			
			(t) --(v31) -- (r) (v) --(v31)
			(t) --(v32) -- (r) (v) --(v32)
			
			(v) to[in=-120,out=120]  (r);
		\end{scope}

		\begin{scope}[shift={(8,-6.5)}]
			\node 	   (label) at (0,0) 		{$H_{5}$};
			\node[hhh] (r) at (0,1) 	{};
			\node[hhh] (s) at (0,2) 	{};
			\node[hhh] (t) at (0,3) 	{};
			\node[hhh] (u) at (0,4) 	{};
			\node[hhh] (v) at (0,5) 	{};
			
			\node[hhh] (v21) at (-2,5.5) 	{};
			\node[hhh] (v22) at (-2,4.5) 	{};
			%	\node (v2dots) at (-2,5.1) 	{$\vdots$};
			\draw[dotted] (v21) to node[left]{\scriptsize $W$} (v22);
			
			\node[hhh] (v421) at (-2,1.5) 	{};
			\node[hhh] (v422) at (-2,0.5) 	{};
			%	\node (v4dots) at (-2,1.1) 	{$\vdots$};
			\draw[dotted] (v421) to node[left]{\scriptsize $Z$} (v422);
			
			\node[hhh] (v311) at (2,5.5) 	{};
			\node[hhh] (v312) at (2,4.5) 	{};
			%	\node (v3dots) at (2,5.1) 	{$\vdots$};	
			\draw[dotted] (v311) to node[right]{\scriptsize $X$} (v312);
			
			\node[hhh] (v321) at (2,0.5) 	{};
			\node[hhh] (v322) at (2,1.5) 	{};
			%	\node (v3dots) at (2,1.1) 	{$\vdots$};	
			\draw[dotted] (v321) to node[right]{\scriptsize $Y$} (v322);		
			
			\draw (r) -- (s) -- (t) -- (u) -- (v)
			
			(v) --(v21) -- (t) 
			(v) --(v22) -- (t) 
			
			(t) --(v421) -- (s) (r) --(v421) -- (v) 
			(t) --(v422) -- (s) (r) --(v422) -- (v)
			
			(v) --(v311) -- (t) (s) --(v311)
			(v) --(v312) -- (t) (s) --(v312)
			
			(t) --(v321) -- (r)	(v) --(v321)
			(t) --(v322) -- (r)	(v) --(v322)
			
			(v) to[in=-120,out=120]  (r);
		\end{scope}

		\begin{scope}[shift={(5,-14.5)},scale=1.3]
			\node 	   (label) at (0,0) 		{$H_{6}$};
			\node[hhh] (r) at (0,1) 	{};
			\node[hhh] (s) at (0,2) 	{};
			\node[hhh] (t) at (0,3) 	{};
			\node[hhh] (u) at (0,4) 	{};
			\node[hhh] (v) at (0,5) 	{};
			
			\node[hhh] (v51) at (2,5.5) 	{};
			\node[hhh] (v52) at (2,4.5) 	{};
			%	\node (v5dots) at (2,5.1) 	{$\vdots$};	
			\draw[dotted] (v51) to node[right]{\scriptsize $A$} (v52);

			\node[hhh] (v411) at (-2,5.5) 	{};
			\node[hhh] (v412) at (-2,4.5) 	{};
			%	\node (v4dots) at (-2,5.1) 	{$\vdots$};
			\draw[dotted] (v411) to node[left]{\scriptsize $Y$} (v412);
			
			\node[hhh] (v421) at (-2,1.5) 	{};
			\node[hhh] (v422) at (-2,0.5) 	{};
			\node (v4dots) at (-2,1.1) 	{$\vdots$};
			\draw[dotted] (v421) to node[left]{\scriptsize $Z$} (v422);
			
			\node[hhh] (v311) at (2,3.5) 	{};
			\node[hhh] (v312) at (2,2.5) 	{};
			%	\node (v3dots) at (2,3.1) 	{$\vdots$};	
			\draw[dotted] (v311) to node[right]{\scriptsize $W$} (v312);
			
			\node[hhh] (v321) at (2,0.5) 	{};
			\node[hhh] (v322) at (2,1.5) 	{};
			%	\node (v3dots) at (2,1.1) 	{$\vdots$};	
			\draw[dotted] (v321) to node[right]{\scriptsize $X$} (v322);		
			
			\draw (r) -- (s) -- (t) -- (u) -- (v)
			(r) --(v51) -- (s) (t) --(v51) -- (u) (v) -- (v51)
			(r) --(v52) -- (s) (t) --(v52) -- (u) (v) -- (v52)
			
			(v) --(v411) -- (u) (t) --(v411) -- (s) 
			(v) --(v412) -- (u) (t) --(v412) -- (s)
			
			(t) --(v421) -- (s) (r) --(v421) -- (v) 
			(t) --(v422) -- (s) (r) --(v422) -- (v)
			
			(v) --(v311) -- (t) (s) --(v311)
			(v) --(v312) -- (t) (s) --(v312)
			
			(t) --(v321) -- (r)	(v) --(v321)
			(t) --(v322) -- (r)	(v) --(v322)
			
			(v) to[in=-120,out=120]  (r);
		\end{scope}
		
	\end{tikzpicture}
	\caption{Critically $C_{5}$-exist graphs}
	\label{Figure: the graphs in EC(C5)}
\end{figure}

%Before we prove Proposition \ref{Proposition: EC(C5)}, we recall the following notion. In a path (tree), the vertices with degree one are called leaves.

\begin{proof}
	\renewcommand{\qedsymbol}{\claimqed}
	Through this proof, we assume that $G$ is a critically $C_{5}$-exist graph with $S=\{r,s,t,u,v\}$  such that $G[S]$ is isomorphic to a $C_{5}$ where $rs,st,tu,uv$, and $vr$ are the edges in $E(G[S])$. By Theorem \ref{Theorem: critical H-exist: not S is independent}, we note that $V(G) - S$ is independent. Thus, any vertex in $V(G) - S$ is adjacent to vertices only in $S$. By Corollary \ref{coro: no vertex adajcet to 1 or 2 or 3 or max degree in critical graph}, if $w \in V(G) - S$, then the neighborhood of $w$ is not exactly: one vertex, two adjacent vertices, or three vertices that induce a path.

	\begin{claim}{\label{Claim: EC(C5) neighborhood of lenght 2, then no one is adajcent to the vertex in between}}
		If $w,x \in V(G) - S$ such that $|N(w)| =2$, then for any vertex $y$ in $N(x)$, $N(w) \cup \{y\}$ do not induce $P_{3}$. 
	\end{claim}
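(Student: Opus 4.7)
I will argue by contradiction, following the template of the earlier claims in this proof. Suppose $w, x \in V(G) - S$ and some $y \in N(x)$ make $N(w) \cup \{y\}$ induce $P_{3}$. Because $V(G) - S$ is independent, both $N(w)$ and $y$ lie in $S$, and by Corollary~\ref{coro: no vertex adajcet to 1 or 2 or 3 or max degree in critical graph} the two vertices of $N(w)$ are non-adjacent in the $C_{5}$ on $S$. Using the rotational symmetry of $C_{5}$, I may assume $N(w) = \{r, t\}$. The unique vertex of $S$ adjacent to both $r$ and $t$ is $s$, so the induced $P_{3}$ forces $y = s$; in particular $sx \in E(G)$ and $x \neq w$.

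The key step is to exhibit a second induced $C_{5}$ in $G$ that is disjoint from $\{s, x\}$. Take $S' := \{w, r, v, u, t\}$. The edges $wr, rv, vu, ut, tw$ trace a $5$-cycle on $S'$, and the remaining pairs $wv, wu, rt, ru, vt$ are all non-edges: the first two because $N(w) = \{r, t\}$, and the last three because they are non-edges of the original $C_{5}$ on $S$. Hence $G[S']$ is an induced $C_{5}$, and by construction $S' \cap \{s, x\} = \emptyset$.

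Now contracting the edge $sx$ does not touch any vertex of $S'$, so $G/sx$ restricted to $S'$ equals $G[S']$, which is $C_{5}$. This contradicts the assumption that $G$ is critically $C_{5}$-exist and proves the claim. The only substantive step is spotting that rerouting the original cycle $r\,s\,t\,u\,v\,r$ through $w$ — whose two neighbors sit at distance $2$ on $C_{5}$ — yields an alternative induced $C_{5}$; once this alternative cycle is identified, contracting any edge lying outside it (in particular $sx$) delivers the contradiction immediately.
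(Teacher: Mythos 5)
Your proof is correct and follows essentially the same route as the paper: both arguments reduce w.l.o.g.\ to $N(w)=\{r,t\}$, force $y=s$, and contract $sx$ to expose the surviving induced $C_{5}$ on $\{w,r,v,u,t\}$. Your write-up merely makes explicit the verification that this five-set induces a cycle, which the paper leaves implicit via $f(\{r,t,u,v,w\})$.
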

	\begin{proof}
		For the sake of contradiction, and w.l.o.g., assume that there are vertices $w,x \in V(G) - S$ such that $N(w) = \{r,t\}$ and there is a vertex $y \in |N_{S}(x)|$, where $N(w) \cup \{y\}$ induces $P_{3}$. Consequently, $y=s$. In $G/sx$, the set $f(\{r,t,u,v,w\})$ induces $C_{5}$, which contradicts the fact that $G$ is a critically $C_{5}$-exist. 
	\end{proof}
	
	\begin{claim}{\label{Claim: EC(C5) no two v3 spanning the S}}
		If $w,x,y \in V(G) - S$ such that $|N(w)|=|N(x)|=3$ where $N(w) \cup N(x) = S$, then $y$ is nonadjacent to any vertex in $S$ that is adjacent to $N(w) \cap N(x)$.
	\end{claim}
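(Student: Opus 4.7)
The plan is to build a \emph{second} induced $C_{5}$ in $G$, call it $S'$, that avoids both $y$ and a chosen $S$-neighbor of it, and then to apply Theorem \ref{Theorem: critical H-exist: not S is independent} to $S'$ to force a contradiction. No edge contraction is needed in the proof; instead, the presence of the second $C_{5}$ together with an adjacent pair in $V(G) - S'$ immediately violates part (1) of that theorem.

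First, I would determine the possible shapes of $N(w)$ and $N(x)$ in $S$. Since $|N(w)| = |N(x)| = 3$ and Corollary \ref{coro: no vertex adajcet to 1 or 2 or 3 or max degree in critical graph} forbids $P_{3}$ and $C_{3}$ as induced neighborhoods, while $S \cong C_{5}$ has independence number two, both $G[N(w)]$ and $G[N(x)]$ must be isomorphic to $P_{2} \cup P_{1}$: a cycle edge of $S$ together with the unique vertex of $S$ at distance two from both its endpoints. From $|N(w) \cup N(x)| = |S| = 5$ we conclude $N(w) \cap N(x) = \{z\}$ for a single vertex $z$. A brief case check rules out the possibility that $z$ is the isolated vertex of either three-set: if, say, $z$ were isolated in $N(w)$, then $N(x)$ would be forced to be $\{z\}$ together with the two $S$-neighbors of $z$, which in $C_{5}$ induces a $P_{3}$, contradicting the corollary. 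Hence $z$ is an endpoint of the cycle edge inside both $N(w)$ and $N(x)$, so the two $S$-neighbors of $z$ split as $z_w \in N(w)$ and $z_x \in N(x)$. Let $u_w$ and $u_x$ denote the isolated vertices of $N(w)$ and $N(x)$; these are the two vertices of $S$ at $C_{5}$-distance two from $z$, and a direct check on $C_{5}$ shows $u_w u_x \in E(G)$.

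Next I would set $S' := \{w, x\} \cup (S - \{z_w, z_x\}) = \{w, z, x, u_w, u_x\}$ and verify that $G[S']$ is an induced $C_{5}$ with cyclic order $w - z - x - u_x - u_w - w$. The five claimed edges $wz, w u_w, xz, x u_x, u_w u_x$ are immediate from the definitions of $N(w), N(x)$ and from the $C_{5}$ structure on $S$. All remaining pairs within $S'$ are nonadjacent: $w \not\sim x$ by independence of $V(G) - S$; $w \not\sim u_x$ and $x \not\sim u_w$ because $u_x \notin N(w)$ and $u_w \notin N(x)$ by the choice of these isolated vertices; and $z \not\sim u_w, u_x$ since $u_w, u_x$ are precisely the two vertices at $C_{5}$-distance two from $z$.

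Finally, suppose for contradiction that $y \sim q$ for some $q \in N(z) \cap S = \{z_w, z_x\}$. Then $y \notin S'$ (as $y \neq w, x$ and $y \in V(G) - S$, while $S' \subseteq \{w, x\} \cup S$) and $q \notin S'$ (as $\{z_w, z_x\}$ was exactly removed from $S$ when forming $S'$). Applying Theorem \ref{Theorem: critical H-exist: not S is independent}(1) to the induced $C_{5}$ on $S'$ forces $V(G) - S'$ to be independent, contradicting $y \sim q$. I expect the case enumeration in the first step — ruling out $z$ as the isolated vertex of either $N(w)$ or $N(x)$ and pinning down the edge $u_w u_x$ in $S$ — to be the main technical obstacle; once that structure is fixed, the construction of $S'$ and the final appeal to Theorem \ref{Theorem: critical H-exist: not S is independent} are mechanical.
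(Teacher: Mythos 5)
Your proof is correct and rests on the same key construction as the paper's: the second induced $C_{5}$ on $\{w,x\}$ together with the three vertices of $S$ outside the $S$-neighborhood of $N(w)\cap N(x)$ (in the paper's w.l.o.g.\ labelling, the set $\{r,t,v,w,x\}$). The only difference is cosmetic: where the paper contracts the edge $sy$ (or $uy$) and observes that this $C_{5}$ survives the contraction, you invoke Theorem \ref{Theorem: critical H-exist: not S is independent}(1) on the new $C_{5}$ to forbid the edge from $y$ to $s$ or $u$ directly, which is the same argument packaged through the already-proven theorem.
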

	\begin{proof}
		W.l.o.g., assume that there are vertices $w,x,y \in V(G) - S$ such that $N(w) = \{r,t,u\}$ and $N(w) \cup N(x) = S$. As a result, $N(x) = \{s,t,v\}$ or $\{s,u,v\}$. Because of symmetry, we assume that $N(x) = \{s,t,v\}$. For the sake of contradiction, we assume that $y$ is adjacent to $s$ (or $u$). However, in $G/sy$ (or $G/uy$), the set $f(\{r,t,v,w,x\})$ induces $C_{5}$ which contradicts the fact that $G$ is a critically $C_{5}$-exist.
	\end{proof}
	
	\begin{claim}{\label{Claim: EC(C5) no two v4 spanning the S}}
		If $w,x,y \in V(G) - S$ such that $|N(w)| = |N(x)| = 4$, where $N(w) \cap N(x)$ induces $P_{3}$, then $y$ is nonadjacent to the leaves in the path induced by $N(w) \cap N(x)$.
	\end{claim}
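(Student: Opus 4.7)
My approach is to exploit a \emph{second} induced $C_{5}$ lurking inside $G[S \cup \{w,x\}]$ and then invoke Theorem~\ref{Theorem: critical H-exist: not S is independent}(1) applied to that alternative $C_{5}$.

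First I would normalize the neighborhoods. Since $V(G)-S$ is independent, we have $N(w), N(x) \subseteq S$, and having size $4$ each they omit a single vertex of $S$. Removing two non-adjacent vertices of $C_{5}$ leaves $P_{2} \cup K_{1}$, whereas removing two adjacent vertices leaves a $P_{3}$; thus the hypothesis forces the omitted vertices to be adjacent in $G[S]$. Up to the dihedral symmetry of $G[S]$, I may take $N(w) = \{s,t,u,v\}$ and $N(x) = \{r,t,u,v\}$, so that $N(w) \cap N(x) = \{t,u,v\}$ is the path $t-u-v$ with leaves $t$ and $v$. The reflection of $G[S]$ swapping $r \leftrightarrow s$ and $t \leftrightarrow v$ simultaneously swaps the roles of $w$ and $x$, so the configuration is symmetric in the two leaves.

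Second, I would check that $\{r,s,w,u,x\}$ induces a $C_{5}$ along the cycle $r - s - w - u - x - r$. The five cycle edges $rs,\ sw,\ wu,\ ux,\ xr$ hold directly from the specified neighborhoods, and the five non-edges $ru,\ rw,\ su,\ sx,\ wx$ follow respectively from the structure of $G[S]$ ($ru, su$), from the explicit forms $r \notin N(w)$ and $s \notin N(x)$, and from the independence of $V(G)-S$ ($wx$). This is the only delicate observation in the proof and I expect it to be the main obstacle; once the auxiliary cycle is spotted, everything else is bookkeeping.

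Finally, suppose for contradiction that $y$ is adjacent to a leaf of $N(w) \cap N(x)$; by the symmetry noted above I may assume $y \sim t$. Theorem~\ref{Theorem: critical H-exist: not S is independent}(1), applied to the induced $C_{5}$ on $\{r,s,w,u,x\}$, forces $V(G) \setminus \{r,s,w,u,x\}$ to be independent. But this complementary set contains both $t$ and $y$, and by assumption $ty \in E(G)$, a contradiction that completes the claim.
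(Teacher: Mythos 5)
Your proof is correct and follows essentially the same route as the paper: both arguments hinge on locating the second induced $C_{5}$ on $(S\cup\{w,x\})$ minus the two leaves of $N(w)\cap N(x)$ (your $\{r,s,w,u,x\}$ is the paper's $\{r,t,v,w,x\}$ under the other normalization). The only cosmetic difference is that you derive the contradiction by citing Theorem~\ref{Theorem: critical H-exist: not S is independent}(1) for that auxiliary cycle, whereas the paper explicitly contracts the edge from $y$ to a leaf and observes the auxiliary $C_{5}$ survives --- the same argument in disguise.
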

	\begin{proof}
		W.l.o.g., assume that there are vertices $w,x,y \in V(G) - S$ such that $N(w) = \{r,s,t,u\}$, while $N(w) \cap N(x)$ induces $P_{3}$. As a result, $N(x)=\{s,t,u,v\}$ or $\{r,t,u,v\}$. Because of symmetry, we assume that $N(x)=\{s,t,u,v$ $\}$. For the sake of contradiction, we assume that $y$ is adjacent to $s$ (or $u$). However, in $G/sy$ (or $G/uy$), the set $f(\{r,t,v,w,x\})$ induces $C_{5}$, which contradicts the fact that $G$ is a critically $C_{5}$-exist.
	\end{proof}
	
	\begin{claim}{\label{Claim: EC(C5) no two v3 and v4}}
		If $w,x \in V(G) - S$ such that $|N(w)| = 3$ and $|N(x)| = 4$ where neither of the leaves of the $P_{4}$ induced by $N(x)$ are in $N(w)$, then $G$ is isomorphic to $H_{1}$.
	\end{claim}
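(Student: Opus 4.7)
The plan is to first pin down the neighborhoods of $w$ and $x$, then observe that $G[S \cup \{w, x\}]$ already contains two ``hidden'' induced $5$-cycles besides $S$ itself, and finally use those cycles to rule out any further vertex by one-line contraction arguments.

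First I would fix coordinates: without loss of generality $N(x) = \{r, s, t, u\}$, so the $P_4$ induced on $N(x)$ is $r - s - t - u$ with leaves $r$ and $u$. The hypothesis $N(w) \cap \{r, u\} = \emptyset$, combined with $|N(w)| = 3$, then forces $N(w) = \{s, t, v\}$.

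The structural heart of the argument is the claim that the two sets
\[
C_1 := \{r, t, v, w, x\} \quad\text{and}\quad C_2 := \{s, u, v, w, x\}
\]
each induce a $C_5$ in $G$, traversed as $r - v - w - t - x - r$ and $s - w - v - u - x - s$ respectively. This is immediate from the neighborhoods above; I would verify it by a one-shot check of the $\binom{5}{2}$ potential edges in each set. Spotting these two auxiliary $5$-cycles is what I expect to be the main obstacle, since there is no slick way to produce them beyond inspecting 5-subsets, but once they are in hand the rest of the proof is essentially automatic.

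With $C_1$ and $C_2$ available, suppose for contradiction that some $y \in V(G) - (S \cup \{w, x\})$ exists. By Theorem \ref{Theorem: critical H-exist: not S is independent}, $V(G) - S$ is independent, so $N(y) \subseteq S$. The key observation is that $S \setminus C_1 = \{s, u\}$ and $S \setminus C_2 = \{r, t\}$ together cover $\{r, s, t, u\}$. Thus any neighbor $z$ of $y$ in $\{s, u\}$ yields $y, z \notin C_1$, so $f(C_1) = C_1$ still induces a $C_5$ in $G/yz$, contradicting criticality; symmetrically, a neighbor in $\{r, t\}$ preserves $C_2$ after contraction. Therefore $N(y) \subseteq \{v\}$, which forces $|N(y)| \leq 1$ and contradicts Corollary \ref{coro: no vertex adajcet to 1 or 2 or 3 or max degree in critical graph}. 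Hence $V(G) = S \cup \{w, x\}$, and by direct inspection of the adjacencies this graph is isomorphic to $H_1$ in Figure \ref{Figure: the graphs in EC(C5)}.
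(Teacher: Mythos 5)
Your proof is correct and follows essentially the same strategy as the paper's: after fixing the neighborhoods by symmetry, you exhibit two auxiliary induced $5$-cycles inside $S\cup\{w,x\}$ whose traces on $S$ miss complementary pairs of vertices, so that any extra vertex $y$ could be contracted into a neighbor while preserving one of them, leaving only the degree-$\le 1$ case excluded by Corollary \ref{coro: no vertex adajcet to 1 or 2 or 3 or max degree in critical graph}. The paper uses the labeling $N(w)=\{r,t,u\}$, $N(x)=\{s,t,u,v\}$ with cycles $\{r,t,v,w,x\}$ and $\{r,s,u,w,x\}$, which is just a relabeling of your $C_1$ and $C_2$.
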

	\begin{proof}
		W.l.o.g., assume vertices $w,x \in V(G) - S$ such that $N(w) = \{r,t,u\}$ and $N(x) = \{s,t,u,v\}$. For the sake of contradiction, we assume that there is a vertex $y \in V(G) - S$. If $y$ is adjacent to $s$ (or $u$), then in $G/sy$ (or $G/uy$), the set $f(\{r,t,v,w,x\})$ induces $C_{5}$, which contradicts the fact that $G$ is a critically $C_{5}$-exist. In contrast, if $y$ is adjacent to $t$ (or $v$), then in $G/ty$ (or $G/vy$), the set $f(\{r,s,u,w,x\})$ induces $C_{5}$, which contradicts the fact that $G$ is a critically $C_{5}$-exist.
	\end{proof}
	
	By Claims \ref{Claim: EC(C5) neighborhood of lenght 2, then no one is adajcent to the vertex in between} to \ref{Claim: EC(C5) no two v3 and v4}, we deduce that the possible critically $C_{5}$-exist graphs are the ones presented in Figure \ref{Figure: the graphs in EC(C5)}. To complete the proof, we demonstrate that every graph in Figure \ref{Figure: the graphs in EC(C5)} is a critically $C_{5}$-exist.
	\begin{claim}{\label{Claim: No C6}}
		There is no $C_{6}$-exist graph in Figure \ref{Figure: the graphs in EC(C5)}.
	\end{claim}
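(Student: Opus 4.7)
The plan is to prove the claim uniformly by contradiction for all six graphs in Figure \ref{Figure: the graphs in EC(C5)}. Suppose some graph $G$ in the figure contains an induced $C_{6}$ on the vertex set $C$. Since $G$ is critically $C_{5}$-exist with $G[S] \cong C_{5}$ for $S=\{r,s,t,u,v\}$, Theorem \ref{Theorem: critical H-exist: not S is independent} gives that $V(G)-S$ is independent in $G$. Two vertices of $C$ that are adjacent on the cycle must be adjacent in $G$, so no two of them can both lie in $V(G)-S$; consequently the vertices of $C\setminus S$ are pairwise non-consecutive on $C_{6}$, and since $\alpha(C_{6})=3$ we have $|C\cap S|\geq 3$.

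I would then split into the three cases $|C\cap S|\in\{3,4,5\}$. If $|C\cap S|=5$, then $S\subseteq C$ and $G[S]$ equals the induced subgraph of $C_{6}$ on any five of its vertices, namely $P_{5}$, which contradicts $G[S]\cong C_{5}$. If $|C\cap S|=4$, then the four $S$-vertices of $C$ induce $P_{4}$ in $G$ (any four vertices of $C_{5}$ induce $P_{4}$), while the four-vertex induced subgraphs of $C_{6}$ are $P_{4}$ exactly when the four vertices are consecutive on the cycle; hence the remaining two vertices of $C$ are consecutive on the $C_{6}$ and both lie in $V(G)-S$, violating the independence of $V(G)-S$. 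If $|C\cap S|=3$, the three vertices of $C\setminus S$ are pairwise non-adjacent on $C_{6}$, so they occupy three alternating positions, forcing the three $S$-vertices of $C$ to occupy the other three alternating positions and therefore to be pairwise non-adjacent in $G$; this contradicts $\alpha(C_{5})=2$.

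The one spot that needs a careful micro-argument is the case $|C\cap S|=4$: among the $\binom{6}{4}=15$ four-element subsets of the vertex set of $C_{6}$, one must identify which induce $P_{4}$. Indexing by the complementary pair of removed vertices (consecutive, at distance two, or opposite) yields $P_{4}$, $P_{3}\cup K_{1}$, and $2K_{2}$ respectively, so $P_{4}$ occurs precisely when the removed pair is consecutive. Beyond this single observation, the proof relies only on the two properties $G[S]\cong C_{5}$ and $V(G)-S$ independent, so it applies uniformly to $H_{1},\ldots,H_{6}$ without inspecting the specific structure of any of these six graphs.
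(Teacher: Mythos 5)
Your proof is correct, and it is genuinely uniform over the six graphs because it uses only the two facts that $G[S]\cong C_{5}$ and that $V(G)-S$ is independent (Theorem \ref{Theorem: critical H-exist: not S is independent}); the classification of the four-vertex induced subgraphs of $C_{6}$ by the distance between the two removed vertices is right, and each of the cases $|C\cap S|\in\{3,4,5\}$ is disposed of correctly. The paper argues in the opposite direction: it asserts that the $C_{6}$-set $T$ can contain at most two vertices of $S$ because $\alpha(C_{5})=2$, forcing four vertices of $T$ into the independent set $V(G)-S$, which is impossible since $\alpha(C_{6})=3$. As written, that first step tacitly assumes $T\cap S$ is independent, which is not automatic --- four consecutive vertices of the $C_{6}$ could a priori all lie in $S$, inducing a $P_{4}$ there, which is perfectly consistent with their being four vertices of the $C_{5}$. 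Your argument instead bounds $|C\setminus S|\le 3$ (which really does follow from the independence of $V(G)-S$ together with $\alpha(C_{6})=3$) and then eliminates the possibilities $|C\cap S|=5,4,3$ one by one; the cases $4$ and $5$ are exactly the configurations the paper's shortcut skips over. So your route is not merely a variant: it supplies the missing case analysis and yields a fully rigorous version of the same counting idea.
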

	\begin{proof}
		Assume for the sake of contradiction, there is a graph $G$ in in Figure \ref{Figure: the graphs in EC(C5)} that is $C_{6}$-exist. Moreover, $T \subseteq V(G)$ where $G[T]$ induces $C_{6}$. Let $S \subseteq V(G)$ such that $G[S]$ induces $C_{5}$. No more than two vertices in $S$ can form an independent set. Hence $T$ can contain at most two vertices from $S$. Consequently, $T$ has four vertices from $V(G)-S$, however, two of such four vertices are adjacent, which contradicts the fact that $V(G)-S$ is independent set.
	\end{proof}
	All graphs in Figure \ref{Figure: the graphs in EC(C5)} are $C_6$-free. Consequently, any contraction of a graph of Figure \ref{Figure: the graphs in EC(C5)} has an induced $C_{5}$, then this $C_{5}$ must be an induced subgraph of the original graph too. Thus, it would be sufficient to prove that in every graph $G$ in Figure \ref{Figure: the graphs in EC(C5)}, the edges of $G$ are critical for every induced $C_{5}$ in $G$.
	
	\begin{claim}{\label{Claim: EC(C5) H1 is a critical C5-exist}}
		The graph $H_{1}$ in Figure \ref{Figure: the graphs in EC(C5)} is a critically $C_{5}$-exist.
	\end{claim}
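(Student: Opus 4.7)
The plan is to exploit the reduction made just before the claim: since $H_1$ is $C_6$-free by Claim~\ref{Claim: No C6}, any induced $C_5$ appearing in a contraction $H_1/e$ lifts to an induced $C_5$ of $H_1$, so it suffices to enumerate the induced $C_5$s of $H_1$ and verify that every edge of $H_1$ is $C_5$-critical for each of them via Theorem~\ref{Theorem: constructive edge characterization}.

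I would begin by listing the induced $C_5$s. Writing $v_3$ and $v_4$ for the two non-cycle vertices of $H_1$, with $N(v_3)=\{r,t,u\}$ and $N(v_4)=\{s,t,u,v\}$, any induced $C_5$ containing $v_3$ must use exactly two of its three neighbors, and any $C_5$ containing $v_4$ must use exactly two of its four neighbors; a direct check of the resulting short list of candidates among the $\binom{7}{5}=21$ five-subsets yields exactly three induced $C_5$s:
\[
S_1=\{r,s,t,u,v\},\quad S_2=\{r,s,u,v_3,v_4\},\quad S_3=\{r,t,v,v_3,v_4\},
\]
with cycles $r\text{--}s\text{--}t\text{--}u\text{--}v\text{--}r$, $v_3\text{--}r\text{--}s\text{--}v_4\text{--}u\text{--}v_3$, and $r\text{--}v_3\text{--}t\text{--}v_4\text{--}v\text{--}r$ respectively.

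For each $S_i$ I would run through all $12$ edges of $H_1$. Edges with both endpoints in $S_i$ are automatically $C_5$-critical by condition~(1) of Theorem~\ref{Theorem: constructive edge characterization}. For an edge $xy$ with $x\notin S_i$ and $y\in S_i$, condition~(2) asks for a neighbor of $x$ in $S_i$ that is not in the closed neighborhood of $y$ inside $H_1[S_i]$, equivalently a neighbor of $x$ among the two vertices of $S_i$ opposite $y$ on the induced $C_5$. A direct inspection of $N_{S_i}(x)$ in each of the roughly twenty relevant edges exhibits such a vertex; the $v_3\leftrightarrow v_4$ symmetry relating $S_2$ and $S_3$, together with the rotational symmetry of $S_1$, cuts this down to only a handful of representatives.

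The main obstacle is purely bookkeeping: about twenty corner-domination conditions must be checked, and one must remember throughout to compute closed neighborhoods inside $H_1[S_i\cup\{x\}]$ rather than in the whole graph, so that the seventh vertex outside $S_i\cup\{x\}$ does not contaminate the neighborhood comparison.
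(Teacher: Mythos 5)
Your proposal is correct and follows essentially the same route as the paper: both rely on the $C_6$-freeness reduction stated just before the claim, identify the same three induced $C_5$s (the paper's $S$, $\{r,s,u,w,x\}$, $\{r,t,v,w,x\}$ with $w=v_3$, $x=v_4$), and then verify that every edge is $C_5$-critical for each of them. The only difference is that the paper dismisses the edge-by-edge verification as ``straightforward,'' whereas you spell out how it reduces, via Theorem~\ref{Theorem: constructive edge characterization}, to checking that each outside endpoint has a neighbor among the two vertices opposite its cycle-neighbor.
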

	\begin{proof}
		The graph $H_{1}$ in Figure \ref{Figure: the graphs in EC(C5)} is isomorphic to a graph $G$ that contains a vertex subset $S=\{r,s,t,u,v\}$ that induces $C_{5}$, where $rs,st,tu,uv,vr \in E(G)$. Moreover, $V(G)= S \cup \{w,x\}$ such that $N(w)=\{r,t,u\}$ and $N(x)=\{s,t,u,v\}$. Clearly, the vertex subsets of $V(G)$ that induce $C_{5}$ are $S$, $\{r,s,u,w,x\}$, and $\{r,t,v,w,x\}$. Furthermore, it is straightforward that every edge in $E(G)$ is $C_{5}$-critical for the previous three vertex subsets. Thus, $G$ is a critically $C_{5}$-exist.
	\end{proof}

	\begin{claim}{\label{Claim: EC(C5) H2 is a critical C5-exist}}
		The graph $H_{2}$ in Figure \ref{Figure: the graphs in EC(C5)} is a critically $C_{5}$-exist.
	\end{claim}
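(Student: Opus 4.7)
My plan is to apply the strategy introduced in Claim~\ref{Claim: No C6} and the paragraph after it: since $H_2$ is $C_6$-free, it suffices to enumerate the induced $C_5$'s of $G \cong H_2$ and verify that every edge of $G$ is $C_5$-critical for each of them. I take $G$ to have vertex set $S \cup \{v_{41}, v_{42}\} \cup Y$, where $S = \{r,s,t,u,v\}$ induces a $C_5$ via $rs,st,tu,uv,vr$, with $N(v_{41}) = \{s,t,v\}$, $N(v_{42}) = \{r,s,u\}$, and every $y \in Y$ satisfies $N(y) = \{s,u,v\}$ (with $|Y| \geq 0$).

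Since $V(G)-S$ is independent and $\alpha(C_5)=2$, any induced $C_5$ in $G$ contains at most two vertices outside $S$. The case of one outside vertex $x$ is impossible because it would require $|N_S(x)|=2$ while every external vertex of $G$ has $|N_S|=3$. For the case of two outside vertices $x,y$, they must share exactly one $S$-neighbor and close the cycle through an edge of $S$ without introducing a chord. Checking the pair types $(v_{41},v_{42})$, $(v_{41},y)$ and $(v_{42},y)$ with $y \in Y$, and $(y_1,y_2)$ with distinct $y_1,y_2 \in Y$, only $(v_{41},v_{42})$ produces an induced $C_5$: among the four candidate bridges $(b,c) \in \{r,u\} \times \{t,v\}$ only $(u,v)$ is a chord-free edge, the others either failing to be an edge ($rt$) or introducing the chord $sr$ or $st$. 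The mixed $(v_{41},y)$ and $(v_{42},y)$ cases always force a chord of $S$, and the $(y_1,y_2)$ case fails because the two vertices have identical $S$-neighborhoods. Hence the induced $C_5$'s of $G$ are exactly $S$ and $S' := \{v_{41}, s, v_{42}, u, v\}$, with $S'$ cycling as $v_{41}-s-v_{42}-u-v-v_{41}$.

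Next I would apply Theorem~\ref{Theorem: constructive edge characterization} to show every edge of $G$ is $C_5$-critical for both $S$ and $S'$. Edges internal to the relevant set are handled by part (1). For an edge $xy$ with $x$ outside and $y$ inside $T \in \{S,S'\}$, I need $N_T(x) \not\subseteq N_T(y) \cup \{y\}$. In every relevant case the external vertex satisfies $|N_T(x)|=3$ while $|N_T(y) \cup \{y\}|=3$, and a routine entry-by-entry check shows the two three-element sets never coincide, because some neighbor of $x$ in $T$ always lies outside the closed $T$-neighborhood of $y$. Combined with Claim~\ref{Claim: No C6}, this yields that every $G$-contraction is $C_5$-free, so $G$ is critically $C_5$-exist. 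I expect the main obstacle to be the $(v_{41}, v_{42})$ case of the enumeration; once the chord analysis there is settled, the remaining corner-domination checks reduce to mechanical comparisons of three-element sets.
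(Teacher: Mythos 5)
Your proof follows the paper's argument for this claim essentially step for step: the paper likewise fixes the structure of $H_2$ (in a rotated but isomorphic labelling, with $N(w)=\{r,t,u\}$, $N(x)=\{s,u,v\}$, and $N(y)=\{r,s,u\}$ for $y\in Y$), shows that $S$ and exactly one further set (the paper's $\{r,s,u,w,x\}$, your $S'$) are the only induced $C_5$'s, checks that every edge is critical for both, and concludes via the $C_6$-freeness reduction set up after Claim~\ref{Claim: No C6}; your write-up is, if anything, more explicit than the paper's in the pair-type enumeration and the corner-domination checks. The one imprecision is your dismissal of the one-outside-vertex case: such a vertex $x$ need not satisfy $|N_S(x)|=2$, since besides being adjacent to exactly the two endpoints of the $P_4$ formed by the four chosen vertices of $S$ it could additionally be adjacent to the fifth, omitted vertex of $S$; the correct reason the case fails is that $N_S(x)$ would then have to be either two vertices at distance two on the cycle or three consecutive vertices, whereas every external vertex of $H_2$ has an $S$-neighbourhood consisting of an edge of the cycle together with a vertex nonadjacent to both of its ends. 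This is a one-line repair and does not affect the rest of the argument.
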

	\begin{proof}
		The graph $H_{2}$ in Figure \ref{Figure: the graphs in EC(C5)} is isomorphic to a graph $G$ that contains a vertex subset $S=\{r,s,t,u,v\}$ that induces $C_{5}$, where $rs,st,tu,uv,vr \in E(G)$. Moreover, $V(G)= S \cup \{w,x\} \cup Y$, such that $N(w)= \{r,t,u\}$, $N(x)= \{s,u,v\}$, $N(y \in Y) =\{r,s,u\}$, and $|Y| \geq 0$.
		
		We prove that $G$ contains only two induced $C_{5}$. For any $i,j$ where $1 \leq i < j \leq |Y|$, if both $y_{i}$ and $y_{j}$ are in a vertex subset that induces a cycle in $G$, then this vertex subset would be $\{r,u,y_{i},y_{j}\}$ and $\{s,u,y_{i},y_{j}\}$. Thus, no induced $C_{5}$ in $G$ contains both $y_{i}$ and $y_{j}$. Moreover, the vertex subsets that induce a cycle in $G$ that contain $y_{i}$ but not $y_{j}$ are $\{r,s,y_{i}\}$, $\{r,u,v,y_{i}\}$, $\{r,u,y_{i},w\}$, $\{s,t,u,y_{i}\}$, and $\{s,u,x,y_{i}\}$. Clearly, none of them induces $C_{5}$. Thus, no induced $C_{5}$ in $G$ contain a vertex $y_{i}$ for any $i \leq l$. Furthermore, no four vertices from $S$ with either $w$ or $x$ induce a $C_{5}$. Consequently, no vertex subset that induces $C_{5}$ in $G$ contains either $w$ or $x$. As a result, a cycle $C_{5}$ is induced in $G$ only by $S$ or $\{r,s,u,w,x\}$. We note that any edge in $E(G)$ is a critical edge for all subsets that induce $C_{5}$. Thus, $G$ is a critically $C_{5}$-exist.
	\end{proof}
	
	The proof of the following claim can be performed in a similar way as that one of Claim \ref{Claim: EC(C5) H2 is a critical C5-exist}. We will just explain the structure of the graph $H_3$, the remaining part is left to the interested reader.
	\begin{claim}{\label{Claim: EC(C5) H3 is a critical C5-exist}}
		The graph $H_{3}$ in Figure \ref{Figure: the graphs in EC(C5)} is a critically $C_{5}$-exist.
	\end{claim}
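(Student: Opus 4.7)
The plan is to follow the template of the proof of Claim \ref{Claim: EC(C5) H2 is a critical C5-exist}. First I would realize $H_3$ explicitly as a graph $G$ containing $S = \{r, s, t, u, v\}$ that induces $C_5$ with edges $rs, st, tu, uv, vr$, two extra vertices $w$ and $x$ with $N(w) = \{r, s, u, v\}$ and $N(x) = \{r, t, u, v\}$, and a (possibly empty) independent set $Y = \{y_1, \ldots, y_\ell\}$ where every $y \in Y$ satisfies $N(y) = \{s, t, v\}$. Note that $w$ misses only $t$ from $S$, $x$ misses only $s$, and $\{w\}, \{x\}, Y$ are pairwise non-adjacent in $G$.

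Next I would enumerate all subsets $T \subseteq V(G)$ with $G[T] \cong C_5$. Because $V(G) - S$ is independent and $\alpha(C_5) = 2$, at most two vertices of $T$ lie outside $S$. The case of zero outside vertices yields only $T = S$. The case of one outside vertex is ruled out by a direct check: combining a single non-$S$ vertex with four $S$-vertices always produces a vertex of degree $\neq 2$ in the induced subgraph. The case of two outside vertices splits into $\{w, x\}$, $\{w, y\}$, $\{x, y\}$, and $\{y_i, y_j\}$; each subcase involving a $y$ is eliminated because the shared neighborhood $\{s, t, v\}$ overlaps heavily with $N(w)$ and $N(x)$, forcing a vertex of degree $\neq 2$, and only the subcase $\{w, x\}$ combined with the three $S$-vertices $s, t, v$ yields the additional $C_5$-subset $T^\ast := \{w, x, s, t, v\}$.

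Since the graphs in Figure \ref{Figure: the graphs in EC(C5)} are $C_6$-free by Claim \ref{Claim: No C6}, any induced $C_5$ in a contraction $G/e$ must lift to a $C_5$ already present in $G$. Hence it suffices to verify that every edge of $G$ is $C_5$-critical for each of $T \in \{S, T^\ast\}$. Via Theorem \ref{Theorem: constructive edge characterization}, edges with both endpoints in $T$ are immediate; for each edge $ab$ with $a \notin T$ and $b \in T$ I would check that $a$ is not corner dominated by $b$ in $G[T \cup \{a\}]$, which reduces to inspecting the short lists $N_T(a)$ and $N_T(b)$. Since $V(G) - S$ and $V(G) - T^\ast$ are both independent in $G$, no edge has both endpoints outside $T$, so this finite case inspection completes the argument.

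The main obstacle is the bookkeeping of the enumeration step when $|Y|$ is arbitrarily large. The key simplification is that all $y \in Y$ have the same neighborhood $\{s, t, v\}$, so any subset containing two or more $y$-vertices forces at least one of $s, t, v$ to have degree exceeding $2$ in the induced subgraph, precluding $C_5$.
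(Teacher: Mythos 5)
Your proposal is correct and follows exactly the route the paper prescribes: the paper itself only records the structural description of $H_{3}$ (with $N(w)=\{r,s,t,u\}$, $N(x)=\{s,t,u,v\}$, $N(y)=\{r,t,v\}$, an isomorphic relabeling of yours) and defers the rest to the template of Claim \ref{Claim: EC(C5) H2 is a critical C5-exist}, which is precisely the enumeration-plus-edge-criticality argument you carry out. Your identification of the second induced $C_{5}$ (your $T^{*}$, corresponding to $\{r,t,v,w,x\}$ in the paper's labels) and the handling of arbitrarily large $Y$ are both correct, so you have in effect supplied the details the paper leaves to the reader.
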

	\begin{proof}
		The graph $H_{3}$ in Figure \ref{Figure: the graphs in EC(C5)} is isomorphic to a graph $G$ that contains a vertex subset $S=\{r,s,t,u,v\}$ that induces $C_{5}$, where $rs,st,tu,uv,vr \in E(G)$. Moreover, $V(G)= S \cup \{w,x\} \cup Y$, such that $N(w)= \{r,s,t,u\}$, $N(x)= \{s,t,u,v\}$, $N(y \in Y) =\{r,t,v\}$, and $|Y| \geq 0$.
	\end{proof}
	
	\begin{claim}{\label{Claim: EC(C5) H4 is a critical C5-exist}}
		The graph $H_{4}$ in Figure \ref{Figure: the graphs in EC(C5)} is a critically $C_{5}$-exist.
	\end{claim}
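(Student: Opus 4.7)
The plan is to follow the template set by the proofs of $H_2$ and $H_3$. First I would give a clean description of $H_4$ as a graph $G$ with $V(G) = S \cup W \cup X \cup Y$, where $S = \{r,s,t,u,v\}$ induces a $C_5$ with $rs,st,tu,uv,vr \in E(G)$, and $N(w) = \{t,v\}$ for $w \in W$, $N(x) = \{r,t\}$ for $x \in X$, $N(y) = \{r,t,v\}$ for $y \in Y$. Since by Claim \ref{Claim: No C6} every graph in Figure \ref{Figure: the graphs in EC(C5)} is $C_6$-free, any induced $C_5$ in a contraction of $G$ must already be an induced $C_5$ in $G$. Hence it suffices to show that every edge of $G$ is $C_5$-critical for every vertex set of $V(G)$ that induces a $C_5$.

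The first step is to enumerate all induced $C_5$'s in $G$. Because $V(G)-S$ is independent and $\alpha(C_5)=2$, any induced $C_5$ of $G$ uses at most two vertices from $V(G)-S$. I would enumerate by how many vertices are taken from $W,X,Y$, using the fact that a vertex $z$ outside $S$ needs exactly two of its $S$-neighbors to lie in any $C_5$ containing $z$. This forces: $y \in Y$ cannot sit on any induced $C_5$ of $G$ (because $\{r,t,v\}\subseteq N(y)$ would always create a chord $rv$); no two vertices of $W$ (resp.\ $X$, resp.\ $Y$) can coexist on an induced $C_5$; and the only mixed pair that works is one $w \in W$ with one $x \in X$, yielding $\{r,t,v,w,x\}$. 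The remaining induced $C_5$'s are $S$ itself, the sets $\{r,v,u,t,x\}$ for $x \in X$, and the sets $\{r,s,t,v,w\}$ for $w \in W$.

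The second step is to verify criticality using Theorem \ref{Theorem: constructive edge characterization}. Edges with both endpoints in the chosen $C_5$ $T$ are automatically critical, so only edges $zp$ with $z \notin T$, $p \in T$ need attention; for such an edge criticality means $z$ has a neighbor in $T$ that is not in $N_T(p)\cup\{p\}$. For each induced $C_5$ $T$ listed above I would tabulate the possible outside vertices $z$ (drawn from $\{s,u\}\cup W \cup X \cup Y$) and for each pair $(z,p)$ with $p \in N_T(z)$ produce a witness neighbor of $z$ in $T \setminus (N_T(p)\cup\{p\})$. Because the $C_5$'s built above are short and the neighborhoods $N(z) \cap S$ always contain two vertices at distance two on $S$, such a witness always exists.

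The main obstacle is the case analysis in the first step, in particular ruling out all combinations involving $Y$ and showing that no induced $C_5$ is hidden among pairs of outside vertices of the same type. Once the induced $C_5$'s are pinned down, the criticality check in the second step is mechanical and short, and the conclusion follows.
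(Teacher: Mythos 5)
Your proposal follows essentially the same route as the paper: describe $H_4$ as $S$ plus the independent families $W,X,Y$ (your labelling is the image of the paper's under a rotation of the $C_5$), use the $C_6$-freeness of the figure's graphs to reduce to showing every edge is $C_5$-critical for every induced $C_5$ of $G$, enumerate those $C_5$'s as $\{r,t,v\}$ together with one vertex of $\{s\}\cup X$ and one of $\{u\}\cup W$, and check criticality via Theorem \ref{Theorem: constructive edge characterization}; this matches the paper's proof in both structure and level of detail. One caveat: your parenthetical reason for excluding $Y$ (``$\{r,t,v\}\subseteq N(y)$ would always create a chord $rv$'') only covers the case where all three of $r,t,v$ lie on the cycle, whereas an induced $C_5$ through $y$ would contain exactly two of them; the correct (and still easy) argument for the remaining cases is that every neighbour of $r$ other than $v$ is also adjacent to $t$, and every neighbour of $v$ other than $r$ is also adjacent to $t$, so the required chordless $r$--$t$ or $t$--$v$ path of length three cannot be completed.
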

	\begin{proof}
		The graph $H_{4}$ in Figure \ref{Figure: the graphs in EC(C5)} is isomorphic to a graph $G$ that contains a vertex subset $S=\{r,s,t,u,v\}$ that induces $C_{5}$, where $rs,st,tu,uv,vr \in E(G)$. Moreover, $V(G)= S \cup W \cup X \cup Y$, such that $N(w \in W)= \{r,t\}$, $N(x \in X)= \{r,u\}$, $N(y \in Y) =\{r,t,u\}$, and $|W|,|X|,|Y| \geq 0$.
		
		For any $k,p$ where $1 \leq k < p \leq |Y|$, if both $y_{k}$ and $y_{p}$ are in a vertex subset that induces cycle in $G$, then this vertex subset would be $\{r,u,y_{k},y_{p}\}$ or $\{r,t,y_{k},y_{p}\}$. Thus, no induced $C_{5}$ in $G$ contains both $y_{k}$ and $y_{p}$. Additionally, the induced cycles in $G$ that contain $y_{k}$ but not $y_{p}$ (or vice versa) are of length less than five. Thus, no induced $C_{5}$ in $G$ contains one vertex $y_{k}$ for any $k \leq n$. Indeed, a vertex subset in $G$ that induces a $C_{5}$ is composed of $\{r,t,u\}$, one vertex from $W \cup \{s\}$, and one vertex from $X \cup \{v\}$. We conclude that any edge in $E(G)$ is a critical edge for all subsets that induce $C_{5}$. Thus, $G$ is a critically $C_{5}$-exist.
	\end{proof}

	\begin{claim}{\label{Claim: EC(C5) H5 is a critical C5-exist}}
		The graph $H_{5}$ in Figure \ref{Figure: the graphs in EC(C5)} is a critically $C_{5}$-exist.
	\end{claim}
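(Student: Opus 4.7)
The plan is to describe $H_{5}$ explicitly, enumerate its induced $C_{5}$-subgraphs, and verify that every edge is $C_{5}$-critical for each of them; combined with the $C_{6}$-freeness from Claim \ref{Claim: No C6}, this yields that every contraction $G/e$ is $C_{5}$-free, hence that $G$ is critically $C_{5}$-exist. Concretely, $H_{5}$ is isomorphic to a graph $G$ with $V(G) = S \cup W \cup X \cup Y \cup Z$, where $S = \{r,s,t,u,v\}$ induces a $C_{5}$ with edges $rs, st, tu, uv, vr$, and $W, X, Y, Z$ are (possibly empty) independent sets attached via $N(w) = \{t,v\}$, $N(x) = \{s,t,v\}$, $N(y) = \{r,t,v\}$, $N(z) = \{r,s,t,v\}$ for $w \in W$, $x \in X$, $y \in Y$, $z \in Z$ respectively.

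First I would enumerate the vertex subsets of $G$ inducing $C_{5}$. Since $V(G)-S$ is independent, any such subset contains at most two vertices from $V(G)-S$. With zero non-$S$ vertices only $S$ qualifies. With one non-$S$ vertex $p$, the induced cycle has the form $p - a - b - c - d - p$ where $a, d \in N_{S}(p)$ are nonadjacent in $S$ and $a - b - c - d$ is an induced $P_{4}$ in $G[S]$ with $b, c \notin N_{S}(p)$; running through $p \in W \cup X \cup Y \cup Z$, only $p = w \in W$ admits such a configuration, producing the unique cycle $w - t - s - r - v - w$ and the subset $T_{w} := \{w, r, s, t, v\}$, while for $p$ of type $X$, $Y$, or $Z$ the candidate inner vertices $b, c$ either collide with $N_{S}(p)$ or fail to form an edge in $G[S]$. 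With two non-$S$ vertices $p, q$, the cycle has shape $p - a - q - b - c - p$ requiring $a \in N(p) \cap N(q)$, $b \in N(q) \setminus N(p)$, $c \in N(p) \setminus N(q)$, $bc \in E(G[S])$, and $ab, ac \notin E(G[S])$. Enumerating the ten unordered type-pairs from $\{W, X, Y, Z\}$, nine are eliminated because one of $N_{S}(p), N_{S}(q)$ contains the other, forcing $b$ or $c$ empty; the remaining pair $(X, Y)$ yields $b = r$ and $c = s$ but no admissible $a$, since $a = t$ makes $ac = ts$ a chord and $a = v$ makes $ab = vr$ a chord. Consequently the induced $C_{5}$ subsets of $G$ are exactly $S$ and the family $\{T_{w} : w \in W\}$.

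Next I would show, for each induced $C_{5}$ subset $T$ and each edge $e \in E(G)$, that $e$ is $C_{5}$-critical for $T$. Since $V(G)-S$ is independent and $u$ has no neighbors outside $\{t,v\}$, the set $V(G) - T$ is independent for both $T = S$ and $T = T_{w}$, so no edge of $G$ has both endpoints outside $T$. By Theorem \ref{Theorem: constructive edge characterization}, it remains to verify for each edge $pq$ with $p \notin T$ and $q \in T \cap N(p)$ that $p$ is not a corner dominated by $q$ in $G[T \cup \{p\}]$, i.e.\ that $N_{T}(p) \not\subseteq N_{T}(q) \cup \{q\}$. This reduces to a short tabulation over the four types of $p$ and each admissible $q$: in every case $N_{T}(p)$ contains a vertex at distance two from $q$ in the cycle induced by $S$ (respectively by $T_{w}$, after the swap of $u$ with $w$), and this vertex lies outside $N_{T}(q) \cup \{q\}$, witnessing the required non-containment.

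The main obstacle is managing the two-vertex subcase of the $C_{5}$-enumeration: of the ten unordered type-pairs, only $(X, Y)$ is eliminated by chord conditions rather than by an empty symmetric-difference argument, so it is easy to overlook. Once this enumeration is settled, the critical-edge check is mechanical, and the conclusion that every contraction $G/e$ is $C_{5}$-free then follows from Claim \ref{Claim: No C6} as noted in the paragraph preceding Claim \ref{Claim: EC(C5) H1 is a critical C5-exist}.
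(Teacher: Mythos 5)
Your proof is correct and follows essentially the same route as the paper's: describe $H_{5}$ via the sets $W,X,Y,Z$ (your labeling is the paper's up to the reflection of the $C_{5}$ fixing $t$), enumerate the induced $C_{5}$'s as $S$ together with one family indexed by $W$, and check that every edge is $C_{5}$-critical for each of them, invoking the $C_{6}$-freeness reduction. The only difference is that you spell out the enumeration and the corner-domination check explicitly where the paper merely asserts them.
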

	\begin{proof}
		The graph $H_{5}$ in Figure \ref{Figure: the graphs in EC(C5)} is isomorphic to a graph $G$ that contains a vertex subset $S=\{r,s,t,u,v\}$ that induces $C_{5}$, where $rs,st,tu,uv,vr \in E(G)$. Moreover, $V(G)= S \cup W \cup X \cup Y \cup Z$, such that $N(w \in W)= \{r,t\}$, $N(x \in X)= \{r,t,u\}$, $N(y \in Y) =\{r,t,v\}$, $N(z \in Z)=\{r,t,u,v\}$, and $|W|,|X|,|Y|,|Z| \geq 0$.
		
		It is clear that no vertex subset in $G$ that induces $C_{5}$ contains two vertices from $V(G) - S$. Moreover, we can prove that no vertex subset in $G$ that induces $C_{5}$ contains one vertices from $V(G) - (S \cup W)$. As a result, the vertex subsets that induces a $C_{5}$ in $G$ are either $S$ or $\{r,t,u,v\}$ together with one vertex from $\{w_{1},w_{2}, \dots , w_{l}\}$. Any edge in $E(G)$ is a critical for all subsets that induce $C_{5}$. Thus, $G$ is a critically $C_{5}$-exist.
	\end{proof}
	
	Being similar to the proof of Claim \ref{Claim: EC(C5) H5 is a critical C5-exist}, the proof of Claim \ref{Claim: EC(C5) H6 is a critical C5-exist} is left for the interested reader; however, we explain the structure of $H_{6}$ in Figure \ref{Figure: the graphs in EC(C5)} in the proof. 
	\begin{claim}{\label{Claim: EC(C5) H6 is a critical C5-exist}}
		The graph $H_{6}$ in Figure \ref{Figure: the graphs in EC(C5)} is a critically $C_{5}$-exist.
	\end{claim}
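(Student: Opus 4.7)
The plan is to follow the same scheme as Claim~\ref{Claim: EC(C5) H5 is a critical C5-exist}. I would first describe $H_{6}$ explicitly: there is a distinguished $C_{5}$ on $S=\{r,s,t,u,v\}$ with edges $rs, st, tu, uv, vr$, and $V(G)-S = W\cup X\cup Y\cup Z\cup A$ with the neighborhoods indicated in the figure, namely $N(w\in W)=\{s,t,v\}$, $N(x\in X)=\{r,t,v\}$, $N(y\in Y)=\{s,t,u,v\}$, $N(z\in Z)=\{r,s,t,v\}$, $N(a\in A)=\{r,s,t,u,v\}$, any of which may be empty; by the preamble of the proof of Proposition~\ref{Proposition: EC(C5)}, $V(G)-S$ is independent.

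Next I would enumerate the induced $C_{5}$'s of $G$. Since an independent set in a $C_{5}$ has size at most $2$, any induced $C_{5}$ uses at most two vertices of $V(G)-S$. For one outside vertex $p$ together with $S\setminus\{x\}$, the $C_{5}$ pattern forces $p$ to be adjacent to exactly the two endpoints of the induced $P_{4}$ on $S\setminus\{x\}$; since each of the five profiles has at least three $S$-neighbors, no pair $(p,x)$ fits. For two outside vertices $p_{1},p_{2}$ with three $S$-vertices, a $C_{5}$ requires a common $S$-neighbor $a$ and $p_{1},p_{2}$ adjacent respectively to the two non-$C_{5}$-neighbors of $a$ in $S$ (which do form an edge in $C_{5}$); a case analysis on $a\in S$ shows that in every case at least one of the two required profiles is not realized by any of $W,X,Y,Z,A$ (for instance, since every profile contains $t$, the subcases $a=r$ and $a=v$ are ruled out immediately). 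Thus $S$ is the only induced $C_{5}$ of $G$.

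I would then use Theorem~\ref{Theorem: constructive edge characterization} to verify that every edge of $G$ is $C_{5}$-critical for $S$. Edges with both endpoints in $S$ are critical by condition~(1) of the theorem. For an edge $pq$ with $p\in V(G)-S$ and $q\in N(p)\cap S$, I check that $p$ is not corner-dominated by $q$ in $G[S\cup\{p\}]$: in each of the five profiles, $p$ has an $S$-neighbor that is not a neighbor of $q$ in $S$ (for example, $v\in N(w)$ but $v\notin N(s)\cap N(t)$ in $S$, and similarly for the other profiles), so condition~(2) of the theorem applies.

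Finally, by Claim~\ref{Claim: No C6}, $H_{6}$ is $C_{6}$-free, so the observation made in the paragraph after that claim yields: any induced $C_{5}$ of a contraction $G/e$ already appears as an induced $C_{5}$ in $G$. Since the only such $C_{5}$ is $S$ and $e$ is $C_{5}$-critical for $S$, the contraction $G/e$ is $C_{5}$-free. As $G[S]\cong C_{5}$, this proves that $H_{6}$ is critically $C_{5}$-exist. The main difficulty is the two-outside-vertex subcase of the enumeration, which requires checking each choice of common neighbor $a\in S$ against each of the five neighborhood profiles; everything else is a routine corner-domination check.
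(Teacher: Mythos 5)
Your proof is correct and follows exactly the template the paper intends: the paper's own proof of Claim~\ref{Claim: EC(C5) H6 is a critical C5-exist} only records the structure of $H_{6}$ and defers the verification to the reader as analogous to Claim~\ref{Claim: EC(C5) H5 is a critical C5-exist}, and you carry out precisely that deferred verification (uniqueness of the induced $C_{5}$, criticality of every edge via Theorem~\ref{Theorem: constructive edge characterization}, and the reduction through $C_{6}$-freeness). The only discrepancy is notational: your profiles $\{s,t,v\}$, $\{r,t,v\}$, $\{s,t,u,v\}$, $\{r,s,t,v\}$, $S$ match the figure, while the paper's text lists $\{r,t,u\}$, $\{r,t,v\}$, $\{r,s,t,u\}$, $\{r,t,u,v\}$, $S$; the two descriptions coincide up to the reflection of the $C_{5}$ fixing $t$, so the same graph family is treated.
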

	\begin{proof}
		The graph $H_{6}$ in Figure \ref{Figure: the graphs in EC(C5)} is isomorphic to a graph $G$ that contains a vertex subset $S=\{r,s,t,u,v\}$ that induces $C_{5}$, where $rs,st,tu,uv,vr \in E(G)$. Moreover, $V(G)= S \cup W \cup X \cup Y \cup Z \cup A$, such $N(w \in W)=\{r,t,u\}$, $N(x \in X)=\{r,t,v\}$, $N(y \in Y)=\{r,s,t,u\}$, $N(z \in Z)=\{r,t,u,v\}$, $N(a\in A)=S$, and $|W|,|X|,|Y|,|Z|,|A| \geq 0$.
	\end{proof}
	
	By Claims \ref{Claim: No C6} to \ref{Claim: EC(C5) H6 is a critical C5-exist}, the proof is complete.
	\renewcommand{\qedsymbol}{$\square$}
\end{proof}

By Theorem \ref{Theorem: characeterization}, Corollary \ref{Corollary: FCC(C5)}, and Proposition \ref{Proposition: EC(C5)}, we obtain the following.
\begin{theorem}{\label{Theorem: C5-free characterization}}
	Let $G$ be a $C_{6}$-free graph that is non-isomorphic to any graph in Figure \ref{Figure: the graphs in EC(C5)}. The graph $G$ is $C_{5}$-free if and only if any $G$-contraction is $C_{5}$-free.
\end{theorem}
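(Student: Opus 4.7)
The plan is to simply instantiate Theorem \ref{Theorem: characeterization} at $\mathcal{H}=\{C_5\}$ and translate the two hypotheses of that meta-theorem into the language of the statement. Theorem \ref{Theorem: characeterization} asserts that whenever $G$ is $\fs(\mathcal{H})$-free and $G$ is not critically $\mathcal{H}$-exist, being $\mathcal{H}$-free is equivalent to every $G$-contraction being $\mathcal{H}$-free, so the whole argument reduces to identifying $\fs(\{C_5\})$ and the family of critically $C_5$-exist graphs.

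For the first hypothesis I would invoke Corollary \ref{Corollary: FCC(C5)}, which says that the only $C_5$-free-split graph is $C_6$. Thus $\fs(\{C_5\})=\{C_6\}$, and being $\fs(\{C_5\})$-free in the sense of Theorem \ref{Theorem: characeterization} is literally the same as being $C_6$-free, which is the hypothesis we have on $G$.

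For the second hypothesis I would invoke Proposition \ref{Proposition: EC(C5)}, which catalogues the critically $C_5$-exist graphs as exactly the six graphs $H_1,\dots,H_6$ displayed in Figure \ref{Figure: the graphs in EC(C5)}. Hence the condition \emph{$G$ is not critically $\{C_5\}$-exist} is precisely the condition that $G$ is non-isomorphic to any graph in Figure \ref{Figure: the graphs in EC(C5)}, which again is exactly the assumption of the theorem.

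With both hypotheses of Theorem \ref{Theorem: characeterization} verified, the biconditional follows immediately. There is no genuine obstacle here: all the real content sits inside Proposition \ref{Proposition: EC(C5)} (the classification of critically $C_5$-exist graphs) and Corollary \ref{Corollary: FCC(C5)}. The final theorem is a packaging statement and its proof is a one-line citation of these three earlier results.
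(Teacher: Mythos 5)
Your proposal matches the paper exactly: the theorem is derived by instantiating Theorem \ref{Theorem: characeterization} with $\mathcal{H}=\{C_5\}$, using Corollary \ref{Corollary: FCC(C5)} to identify $\fs(\{C_5\})=\{C_6\}$ and Proposition \ref{Proposition: EC(C5)} to identify the critically $C_5$-exist graphs with those in Figure \ref{Figure: the graphs in EC(C5)}. This is precisely the one-line citation the paper gives, so the proposal is correct and takes the same route.
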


\section{Split Graphs}
Split graphs were introduced in \cite{hammer1977} and were characterized as follows:
\begin{theorem}\cite{hammer1977}\label{Theorem: split graph hammer charcaterization}
	A graph $G$ is split if and only if $G$ is $\{2K_{2}, C_{4}, C_{5}\}$-free.
\end{theorem}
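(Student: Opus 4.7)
The plan is to prove the two directions separately.

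For the forward direction, assume $G$ admits a split partition $V(G) = K \cup I$ with $K$ a clique and $I$ an independent set. For any induced subgraph $H$ of $G$ on a vertex set $T$, the trivial bounds $|T \cap K| \leq \omega(H)$ and $|T \cap I| \leq \alpha(H)$ hold. When $H = C_5$, both invariants equal $2$, giving $|T| \leq 4 < 5$, an immediate contradiction. When $H \in \{2K_2, C_4\}$, the same bounds force $|T \cap K| = |T \cap I| = 2$; the two vertices in $T \cap K$ must form an edge of $H$ (they are adjacent in $G$ and the subgraph is induced) while the two vertices in $T \cap I$ must form a non-edge of $H$. A direct check of the four-vertex structures of $2K_2$ and $C_4$ shows no such $2+2$ labelling is possible, because in both graphs each pair of adjacent vertices is matched by a pair of adjacent vertices among the remaining two.

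For the converse, assume $G$ is $\{2K_2, C_4, C_5\}$-free, and among all maximum cliques of $G$ choose a $K$ that additionally maximizes $\sum_{w \in V(G) \setminus K} |N(w) \cap K|$. The goal is to show $V(G) \setminus K$ is independent, after which $(K, V(G)\setminus K)$ is the desired split partition. Suppose for contradiction that $u, v \in V(G) \setminus K$ with $uv \in E(G)$. Since $K \cup \{u\}$ and $K \cup \{v\}$ are not cliques, there exist $x, y \in K$ with $ux, vy \notin E(G)$. I would then split on whether $x = y$ or $x \neq y$, and on the adjacencies of $u$ and $v$ to the remaining vertices of $K$. Sample reasoning: if $x = y$ and some $z \in K \setminus \{x\}$ is non-adjacent to both $u$ and $v$, then $\{u, v, x, z\}$ induces $2K_2$; if $z$ is adjacent to both, inspection of a third vertex of $K$ together with $u, v, x, z$ produces an induced $C_4$ or $C_5$; and in the mixed subcases (one of $u, v$ adjacent to $z$) the configuration $x{-}z{-}u{-}v$ together with another vertex of $K$ yields a forbidden subgraph.

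The main obstacle lies in the subcases where the local structure around $\{u, v, x, y\}$ does not immediately expose $2K_2$, $C_4$, or $C_5$. There, the extremal choice of $K$ must be used: if $u$ is adjacent to every vertex of $K \setminus \{x\}$ (say), then $K' := (K \setminus \{x\}) \cup \{u\}$ is also a maximum clique, and the swap strictly increases $\sum_{w} |N(w) \cap K|$ (since $x$ was counted with at most $k - 1$ neighbours in $K'$ whereas $u$ was counted with exactly $k-1$ in $K$, and the edge $uv$ contributes a new neighbour), contradicting the extremal choice. This extremal swap is the key device that closes the analysis once the forbidden-subgraph casework has pared down the possibilities. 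The converse then follows, and combined with the forward direction this yields Theorem~\ref{Theorem: split graph hammer charcaterization}.
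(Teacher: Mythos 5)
This statement is quoted from Hammer and Simeone with a citation; the paper gives no proof of it, so there is nothing internal to compare against and your attempt has to stand on its own. Your forward direction does: for an induced subgraph $H=G[T]$ of a split graph with partition $K\cup I$, the bounds $|T\cap K|\le\omega(H)$ and $|T\cap I|\le\alpha(H)$ kill $C_5$ outright, and for $2K_2$ and $C_4$ the forced $2+2$ split fails because the complementary pair of every edge in either graph is again an edge. That half is complete and correct.

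The converse has a genuine gap, and it sits exactly where you locate the ``key device.'' Write $k=|K|$ and consider the swap $K'=(K\setminus\{x\})\cup\{u\}$. Since $x$ is adjacent to all of $K\setminus\{x\}$ and not to $u$, it contributes exactly $k-1$ to the new sum, the same as $u$ contributed to the old one, so the pair $\{u,x\}$ is a net wash; the total change in $\sum_{w}|N(w)\cap K|$ is therefore $\bigl|N(u)\cap(V\setminus(K\cup\{u\}))\bigr|-\bigl|N(x)\cap(V\setminus(K\cup\{u\}))\bigr|$. The edge $uv$ guarantees the first term is at least $1$, but every vertex outside $K$ that is adjacent to $x$ and not to $u$ subtracts $1$ (and even $v$ itself gains nothing if $vx\in E(G)$), so the claimed strict increase simply does not follow. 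The correct use of the extremal choice is the reverse implication: optimality of $K$ forces $x$ to have at least as many neighbours outside $K$ as $u$ does, hence at least one such neighbour $w$, and one must then run a further forbidden-subgraph analysis on $\{u,v,w,x,y\}$ for a suitable second vertex $y\in K$ to manufacture a $2K_2$, $C_4$, or $C_5$. That analysis is the heart of the Hammer--Simeone argument and is absent here. Relatedly, your earlier casework cannot be closed ``by inspection of a third vertex of $K$'': in the unresolved configurations ($x$ non-adjacent to both $u$ and $v$, $y$ non-adjacent to exactly one of them) the set $\{u,v\}\cup K$ only ever spans $P_4$'s and paws, because $K$ is a clique; a forbidden subgraph can only be completed by reaching outside $K$, which is precisely what the (correctly deployed) extremal property is for. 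As written, the converse is a plan with its decisive step both missing and misstated.
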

Thus, we call a graph that is $\{2K_{2}, C_{4}, C_{5}\}$-exist \emph{non-split} graph.
Additionally, split graphs have been characterized in \cite{hammer1977} as chordal graphs whose complements are also chordal. Furthermore, it was characterized by its degree sequences in \cite{hammer1981splittance}. Moreover, further properties of split graphs are studied in \cite{bertossi1984dominating,kratsch1996toughness,merris2003split}.

By Theorems \ref{Theorem: characeterization}, \ref{Theorem: 2K2-free characterization}, \ref{Theorem: C4-free characterization}, and \ref{Theorem: C5-free characterization} and Propositions \ref{pro: only one free-split for cycles}, \ref{Lemma: Cycle contraction}, and \ref{Lemma: EC(C3)}, we obtain:

\begin{theorem}{\label{Theorem: split characterization short form}}
	Let $G$ be a graph that is non-isomorphic to any graph in Figure \ref{Figure: the graphs in EC(split)}. The graph $G$ is split if and only if any $G$-contraction is split.
\end{theorem}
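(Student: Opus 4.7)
The plan is to apply Theorem \ref{Theorem: characeterization} with $\mathcal{H}=\{2K_{2},C_{4},C_{5}\}$, since by Hammer's characterization (Theorem \ref{Theorem: split graph hammer charcaterization}) a graph is split if and only if it is $\mathcal{H}$-free. Two things must then be verified: that $\fs(\mathcal{H})=\emptyset$ (so every $G$ is vacuously $\fs(\mathcal{H})$-free), and that the critically $\mathcal{H}$-exist graphs are exactly those displayed in Figure \ref{Figure: the graphs in EC(split)}. Once both are in hand, Theorem \ref{Theorem: characeterization} yields the stated equivalence for every $G$ not listed in Figure \ref{Figure: the graphs in EC(split)}.

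First I would establish $\fs(\mathcal{H})=\emptyset$. Any $G\in\fs(\mathcal{H})$ is $\mathcal{H}$-free and has some contraction isomorphic to an $H\in\mathcal{H}$, and since $G$ is in particular $H$-free this gives $G\in\fs(H)$ for that $H$. Corollary \ref{Corollary: FCC(2K2)} says $\fs(2K_{2})=\emptyset$, while Corollary \ref{Corollary: FCC(C4)} (a special case of Proposition \ref{pro: only one free-split for cycles}) gives $\fs(C_{4})=\{C_{5}\}$ and Corollary \ref{Corollary: FCC(C5)} gives $\fs(C_{5})=\{C_{6}\}$. Thus the only candidates are $C_{5}$ and $C_{6}$; but $C_{5}$ contains an induced $C_{5}$ and $C_{6}$ contains an induced $2K_{2}$, so neither is $\mathcal{H}$-free. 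Hence $\fs(\mathcal{H})=\emptyset$.

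Next I would characterize the critically $\mathcal{H}$-exist graphs. Let $G$ be such a graph. If $G$ contained an induced $C_{5}$, then Proposition \ref{Lemma: Cycle contraction} would produce a $G$-contraction with an induced $C_{4}$, contradicting that every contraction of $G$ is $\mathcal{H}$-free. Hence $G$ is $C_{5}$-free and must contain an induced $2K_{2}$ or an induced $C_{4}$. If $G$ contains $2K_{2}$, then because every contraction is in particular $2K_{2}$-free, $G$ is critically $2K_{2}$-exist and therefore appears in Figure \ref{Figure: the graphs in EC(2K2)}; analogously, if $G$ contains $C_{4}$ then $G$ is critically $C_{4}$-exist and appears in Figure \ref{Figure: the graphs in EC(C4)}. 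One then filters the union of these two lists, retaining precisely the graphs $G$ for which every contraction is simultaneously $2K_{2}$-, $C_{4}$-, and $C_{5}$-free. Proposition \ref{Lemma: EC(C3)} can be invoked here to see that a contraction producing a new $C_{3}$ is harmless, so the only relevant constraints are the three from $\mathcal{H}$. The survivors comprise exactly Figure \ref{Figure: the graphs in EC(split)}.

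With $\fs(\mathcal{H})=\emptyset$ and the critically $\mathcal{H}$-exist graphs pinned down to Figure \ref{Figure: the graphs in EC(split)}, Theorem \ref{Theorem: characeterization} applied to any $G$ not isomorphic to a graph in that figure gives the desired biconditional. I expect the main obstacle to be the filtering step: going through each graph of Figures \ref{Figure: the graphs in EC(2K2)} and \ref{Figure: the graphs in EC(C4)} and verifying that every one of its contractions avoids not only the originally forbidden subgraph but also the other two members of $\mathcal{H}$. This is a finite but delicate case analysis, and the resulting list is precisely what Figure \ref{Figure: the graphs in EC(split)} is meant to depict.
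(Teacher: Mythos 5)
Your proposal is correct and follows essentially the same route as the paper: the theorem is obtained by applying Theorem \ref{Theorem: characeterization} to $\mathcal{H}=\{2K_{2},C_{4},C_{5}\}$, using Corollaries \ref{Corollary: FCC(2K2)}, \ref{Corollary: FCC(C4)}, and \ref{Corollary: FCC(C5)} (via Proposition \ref{pro: only one free-split for cycles}) to conclude $\fs(\mathcal{H})=\emptyset$, and Propositions \ref{Lemma: Cycle contraction}, \ref{Lemma: EC(C3)}, \ref{Proposition: EC(2k2)}, and \ref{Proposition: EC(C4)} to pin the critically $\mathcal{H}$-exist graphs down to Figure \ref{Figure: the graphs in EC(split)}. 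The final filtering of the candidate lists is exactly the finite case check the paper also leaves implicit.
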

The class of split graphs is a closed class under edge contraction. The definition of a split graph implies that by contraction of an arbitrary edge in a split graph leads to another split graph. So the contribution of Theorem \ref{Theorem: split characterization short form} is in listing the critically non-split graphs.

\begin{figure}[ht!]
	\centering
	\begin{tikzpicture}[hhh/.style={draw=black,circle,inner sep=2pt,minimum size=0.2cm}]
		\begin{scope}[shift={(-2,0)}]
			\node 		(h) at (0,0)	 	{$H_{1}$};
			\begin{scope}[shift={(0,1.3)}]
				\def \n {4}
				\def \radius {0.7cm}
				\def \radiusCorrect {3}
				\def \margin {10} % margin in angles, depends on the radius
				\def \rotate {0}	% to rotate the cycle
				
				\node[hhh]  (e) at (-1.5,0)		{};
				\node[hhh]  (f) at (-2.5,0)		{};
				\node[text centered]  (g) at (-1.9,0)		{$\dots$};
				\draw ({360/\n * (2 - 1)+\rotate}:\radius)--(e)--({360/\n * (4 - 1)+\rotate}:\radius)
				({360/\n * (2 - 1)+\rotate}:\radius)--(f)--({360/\n * (4 - 1)+\rotate}:\radius);
				\foreach \s in {1,...,\n}
				{
					\node[hhh,fill=white] at ({360/\n * (\s - 1)+\rotate}:\radius) {};
					\draw[ >=latex]  ({360/\n * (\s - 1)+\margin + \rotate}:\radius)
					arc ({360/\n * (\s - 1)+\margin+\rotate}:{360/\n * (\s)-\margin + \rotate}:\radius);
				}
			\end{scope}
		\end{scope}
		
		\begin{scope}[shift={(0.5,0)}]
			\node 		(h) at (0,0)	 	{$H_{2}$};
			\begin{scope}[shift={(0,1.3)}]
				\def \n {4}
				\def \radius {0.7cm}
				\def \radiusCorrect {3}
				\def \margin {10} % margin in angles, depends on the radius
				\def \rotate {45}	% to rotate the cycle
				
				\foreach \s in {1,...,\n}
				{
					\node[hhh] at ({360/\n * (\s - 1)+\rotate}:\radius) {};
					\draw[ >=latex]  ({360/\n * (\s - 1)+\margin + \rotate}:\radius)
					arc ({360/\n * (\s - 1)+\margin+\rotate}:{360/\n * (\s)-\margin + \rotate}:\radius);
				}
				\node[hhh]  (e) at (0,0)		{};
				\draw ({360/\n * (1 - 1)+\rotate}:\radius-\radiusCorrect)--(e)--({360/\n * (2 - 1)+\rotate}:\radius-\radiusCorrect)
				({360/\n * (3 - 1)+\rotate}:\radius-\radiusCorrect)--(e)--({360/\n * (4 - 1)+\rotate}:\radius-\radiusCorrect);
			\end{scope}
		\end{scope}
		
		\begin{scope}[shift={(3.5,0)}]
			\node 		(h) at (0,0)	 	{$H_{3}$};
			\begin{scope}[shift={(0,1.3)}]
				\def \n {6}
				\def \radius {0.7cm}
				\def \radiusCorrect {3}
				\def \margin {10} % margin in angles, depends on the radius
				\def \rotate {0}	% to rotate the cycle
				
				\foreach \s in {1,...,\n}
				{
					\node[hhh] at ({360/\n * (\s - 1)+\rotate}:\radius) {};
					\draw[ >=latex]  ({360/\n * (\s - 1)+\margin + \rotate}:\radius)
					arc ({360/\n * (\s - 1)+\margin+\rotate}:{360/\n * (\s)-\margin + \rotate}:\radius);
				}
				\draw ({360/\n * (3 - 1)+\rotate}:\radius-\radiusCorrect)--({360/\n * (0)+\rotate}:\radius-\radiusCorrect)
				({360/\n * (5 - 1)+\rotate}:\radius-\radiusCorrect)--({360/\n * (0)+\rotate}:\radius-\radiusCorrect)
				({360/\n * (2 - 1)+\rotate}:\radius-\radiusCorrect)--({360/\n * (4-1)+\rotate}:\radius-\radiusCorrect)
				({360/\n * (6 - 1)+\rotate}:\radius-\radiusCorrect)--({360/\n * (4-1)+\rotate}:\radius-\radiusCorrect)
				({360/\n * (2 - 1)+\rotate}:\radius-\radiusCorrect)--({360/\n * (6-1)+\rotate}:\radius-\radiusCorrect)
				({360/\n * (3 - 1)+\rotate}:\radius-\radiusCorrect)--({360/\n * (5-1)+\rotate}:\radius-\radiusCorrect);
			\end{scope}
		\end{scope}

		\begin{scope}[shift={(-3.75,-1.5)},scale=0.6]
			\node[hhh] 	(a) at (45:1cm) 	{};
			\node[hhh]  (b) at (135:1cm) 	{};
			\node[hhh] 	(c) at (225:1cm) 	{};
			\node[hhh] 	(d) at (-45:1cm) 	{};

			\node 	(h) at (-90:1.7cm) 	{$H_{4}$};
			
			\draw (a) -- (b)  (c) -- (d) ;
		\end{scope}
		
		\begin{scope}[shift={(-1.25,-1.5)},scale=0.6]
			\node[hhh] 	(a) at (0:1.2cm) 	{};
			\node[hhh]  (b) at (0:0.6cm) 	{};
			\node[hhh] 	(c) at (180:0.6cm) 	{};
			\node[hhh] 	(d) at (180:1.2cm) 	{};
			\node[hhh] 	(e) at (0:0cm) 	{};
			
			\node 	(h) at (-90:1.7cm) 	{$H_{5}$};
			
			\draw (a) -- (b) --(e) -- (c) -- (d);
		\end{scope}

		\begin{scope}[shift={(1.25,-1.5)},scale=0.6]
			\node[hhh] 	(a) at (90:1cm) 	{};
			\node[hhh]  (b) at (90:0.4cm) 	{};
			\node[hhh] 	(c) at (225:1.2cm) 	{};
			\node[hhh] 	(d) at (-45:1.2cm) 	{};
			\node[hhh] 	(e) at (-90:0.3cm) 	{};

			\node 	(h) at (-90:1.7cm) 	{$H_{6}$};
			
			\draw (a) -- (b) --(e) -- (c) -- (d) (d) -- (e);
		\end{scope}

		\begin{scope}[shift={(3.75,-1.5)},scale=0.6]
			\node[hhh] 	(a) at (45:1.2cm) 	{};
			\node[hhh]  (b) at (135:1.2cm) 	{};
			\node[hhh] 	(c) at (225:1.2cm) 	{};
			\node[hhh] 	(d) at (-45:1.2cm) 	{};
			\node[hhh] 	(e) at (180:0cm) 	{};

			\node 	(h) at (-90:1.7cm) 	{$H_{7}$};
			
			\draw (a) -- (b) --(e) -- (c) -- (d) (d) -- (e) -- (a);
		\end{scope}
		
	\end{tikzpicture}	
	\caption{Critically non-split graphs}
	\label{Figure: the graphs in EC(split)}
\end{figure}
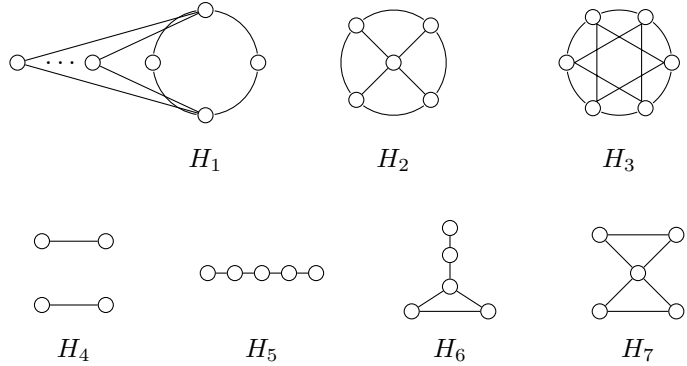

\section{Pseudo-Split Graphs}\label{Section: pseudo-split}
In \cite{blazsik1993graphs}, the family of $\{2K_{2}, C_{4}\}$-free graphs was investigated and later referred to as pseudo-split graphs in \cite{maffray1994linear}. 
Thus, we call a graph that is $\{2K_{2}, C_{4}\}$-exist \emph{non-pseudo-split} graph.

By Theorems \ref{Theorem: characeterization}, \ref{Theorem: 2K2-free characterization} and \ref{Theorem: C4-free characterization} and Propositions \ref{pro: only one free-split for cycles}, \ref{Lemma: Cycle contraction}, and \ref{Lemma: EC(C3)}, we obtain:

\begin{theorem}{\label{Theorem: Pseudo-split characterization short form}}
	Let $G$ be a $C_{5}$-free graph that is non-isomorphic to any graph in Figure \ref{Figure: the graphs in EC(Pseudo-split)}. The graph $G$ is Pseudo-split if and only if any $G$-contraction is Pseudo-split.
\end{theorem}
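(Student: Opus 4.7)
The approach is to apply Theorem~\ref{Theorem: characeterization} to $\mathcal{H} = \{2K_{2}, C_{4}\}$; once $\fs(\{2K_{2}, C_{4}\})$ and the critically $\{2K_{2}, C_{4}\}$-exist graphs are pinned down, the biconditional follows directly.

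First I would determine $\fs(\{2K_{2}, C_{4}\})$. Any graph in this family is $\{2K_{2}, C_{4}\}$-free and admits a contraction isomorphic to $2K_{2}$ or to $C_{4}$. Corollary~\ref{Corollary: FCC(2K2)} eliminates the $2K_{2}$-contraction case entirely (no $2K_{2}$-free-split graph exists). Corollary~\ref{Corollary: FCC(C4)} leaves $C_{5}$ as the only candidate in the $C_{4}$-contraction case, and a direct check confirms $C_{5}$ is $2K_{2}$-free. Hence $\fs(\{2K_{2}, C_{4}\}) = \{C_{5}\}$, so the $\fs(\{2K_{2}, C_{4}\})$-free hypothesis of Theorem~\ref{Theorem: characeterization} coincides with the $C_{5}$-free assumption in the statement.

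Next I would characterize the critically $\{2K_{2}, C_{4}\}$-exist graphs and argue they are exactly the ones in Figure~\ref{Figure: the graphs in EC(Pseudo-split)}. A useful preservation fact is that $2K_{2}$-freeness is maintained under edge contraction: if $\{a, b, c, d\}$ were to induce a $2K_{2}$ in $G/uv$ with (say) $a$ being the contracted vertex, then substituting whichever of $u, v$ witnesses the required edge already produces an induced $2K_{2}$ in $G$. Using this, a critically $\{2K_{2}, C_{4}\}$-exist $G$ splits into two cases. If $G$ is $2K_{2}$-free, then $2K_{2}$-freeness of every contraction is automatic, so $G$ must be critically $C_{4}$-exist and lies in Figure~\ref{Figure: the graphs in EC(C4)} by Proposition~\ref{Proposition: EC(C4)}. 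Otherwise $G$ is $2K_{2}$-exist and must be critically $2K_{2}$-exist, placing $G$ in Figure~\ref{Figure: the graphs in EC(2K2)} by Proposition~\ref{Proposition: EC(2k2)}, with the extra requirement that every contraction of $G$ is also $C_{4}$-free. For each candidate in Figure~\ref{Figure: the graphs in EC(2K2)} I would apply Theorem~\ref{Theorem: C4-free characterization} to verify the extra condition, invoking Propositions~\ref{pro: only one free-split for cycles}, \ref{Lemma: Cycle contraction}, and \ref{Lemma: EC(C3)} to handle the pure-cycle case (for instance, $C_{6}$ contracts only to $C_{5}$, which is $C_{4}$-free, so $C_{6}$ survives into the critically $\{2K_2, C_4\}$-exist list).

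The main obstacle is the bookkeeping in this second step: screening each member of Figure~\ref{Figure: the graphs in EC(2K2)} to decide whether every one of its contractions avoids $C_{4}$, and confirming that the merged list coincides exactly with Figure~\ref{Figure: the graphs in EC(Pseudo-split)}. Once that verification is in hand, Theorem~\ref{Theorem: characeterization} immediately delivers the required biconditional for every $C_{5}$-free graph $G$ not appearing in Figure~\ref{Figure: the graphs in EC(Pseudo-split)}.
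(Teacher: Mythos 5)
Your proposal is correct and follows essentially the same route as the paper, which derives this theorem by combining Theorem \ref{Theorem: characeterization} with the $2K_{2}$ and $C_{4}$ results (computing $\fs(\{2K_{2},C_{4}\})=\{C_{5}\}$ via Corollaries \ref{Corollary: FCC(2K2)} and \ref{Corollary: FCC(C4)}, and assembling the critically $\{2K_{2},C_{4}\}$-exist list by filtering Figures \ref{Figure: the graphs in EC(C4)} and \ref{Figure: the graphs in EC(2K2)}, using Proposition \ref{Lemma: Cycle contraction} to discard the candidates containing an induced $C_{5}$). The paper gives no further written detail, so your fleshed-out case split (with the observation that $2K_{2}$-freeness is preserved under contraction) is exactly the intended argument.
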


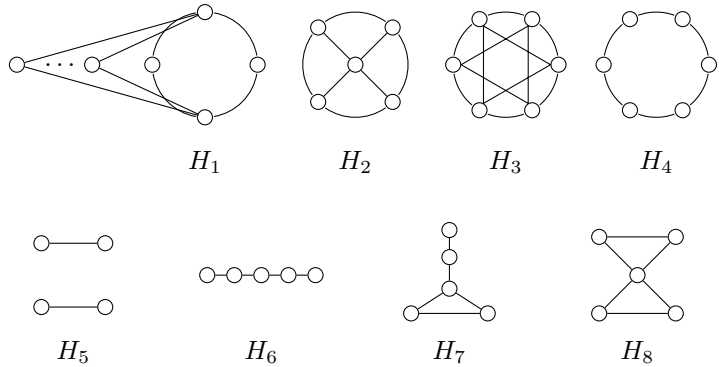
\begin{figure}[ht!]
	\centering
	\begin{tikzpicture}[hhh/.style={draw=black,circle,inner sep=2pt,minimum size=0.2cm}]
		\begin{scope}[shift={(-2,0)}]
			\node 		(h) at (0,0)	 	{$H_{1}$};
			\begin{scope}[shift={(0,1.3)}]
				\def \n {4}
				\def \radius {0.7cm}
				\def \radiusCorrect {3}
				\def \margin {10} % margin in angles, depends on the radius
				\def \rotate {0}	% to rotate the cycle
				
				\node[hhh]  (e) at (-1.5,0)		{};
				\node[hhh]  (f) at (-2.5,0)		{};
				\node[text centered]  (g) at (-1.9,0)		{$\dots$};
				\draw ({360/\n * (2 - 1)+\rotate}:\radius)--(e)--({360/\n * (4 - 1)+\rotate}:\radius)
				({360/\n * (2 - 1)+\rotate}:\radius)--(f)--({360/\n * (4 - 1)+\rotate}:\radius);
				\foreach \s in {1,...,\n}
				{
					\node[hhh,fill=white] at ({360/\n * (\s - 1)+\rotate}:\radius) {};
					\draw[ >=latex]  ({360/\n * (\s - 1)+\margin + \rotate}:\radius)
					arc ({360/\n * (\s - 1)+\margin+\rotate}:{360/\n * (\s)-\margin + \rotate}:\radius);
				}
			\end{scope}
		\end{scope}
		
		\begin{scope}[shift={(0,0)}]
			\node 		(h) at (0,0)	 	{$H_{2}$};
			\begin{scope}[shift={(0,1.3)}]
				\def \n {4}
				\def \radius {0.7cm}
				\def \radiusCorrect {3}
				\def \margin {10} % margin in angles, depends on the radius
				\def \rotate {45}	% to rotate the cycle
				
				\foreach \s in {1,...,\n}
				{
					\node[hhh] at ({360/\n * (\s - 1)+\rotate}:\radius) {};
					\draw[ >=latex]  ({360/\n * (\s - 1)+\margin + \rotate}:\radius)
					arc ({360/\n * (\s - 1)+\margin+\rotate}:{360/\n * (\s)-\margin + \rotate}:\radius);
				}
				\node[hhh]  (e) at (0,0)		{};
				\draw ({360/\n * (1 - 1)+\rotate}:\radius-\radiusCorrect)--(e)--({360/\n * (2 - 1)+\rotate}:\radius-\radiusCorrect)
				({360/\n * (3 - 1)+\rotate}:\radius-\radiusCorrect)--(e)--({360/\n * (4 - 1)+\rotate}:\radius-\radiusCorrect);
			\end{scope}
		\end{scope}
		
		\begin{scope}[shift={(2,0)}]
			\node 		(h) at (0,0)	 	{$H_{3}$};
			\begin{scope}[shift={(0,1.3)}]
				\def \n {6}
				\def \radius {0.7cm}
				\def \radiusCorrect {3}
				\def \margin {10} % margin in angles, depends on the radius
				\def \rotate {0}	% to rotate the cycle
				
				\foreach \s in {1,...,\n}
				{
					\node[hhh] at ({360/\n * (\s - 1)+\rotate}:\radius) {};
					\draw[ >=latex]  ({360/\n * (\s - 1)+\margin + \rotate}:\radius)
					arc ({360/\n * (\s - 1)+\margin+\rotate}:{360/\n * (\s)-\margin + \rotate}:\radius);
				}
				\draw ({360/\n * (3 - 1)+\rotate}:\radius-\radiusCorrect)--({360/\n * (0)+\rotate}:\radius-\radiusCorrect)
				({360/\n * (5 - 1)+\rotate}:\radius-\radiusCorrect)--({360/\n * (0)+\rotate}:\radius-\radiusCorrect)
				({360/\n * (2 - 1)+\rotate}:\radius-\radiusCorrect)--({360/\n * (4-1)+\rotate}:\radius-\radiusCorrect)
				({360/\n * (6 - 1)+\rotate}:\radius-\radiusCorrect)--({360/\n * (4-1)+\rotate}:\radius-\radiusCorrect)
				({360/\n * (2 - 1)+\rotate}:\radius-\radiusCorrect)--({360/\n * (6-1)+\rotate}:\radius-\radiusCorrect)
				({360/\n * (3 - 1)+\rotate}:\radius-\radiusCorrect)--({360/\n * (5-1)+\rotate}:\radius-\radiusCorrect);
			\end{scope}
		\end{scope}

		\begin{scope}[shift={(4,0)}]
			\node 		(h) at (0,0)	 	{$H_{4}$};
			\begin{scope}[shift={(0,1.3)}]
				\def \n {6}
				\def \radius {0.7cm}
				\def \radiusCorrect {3}
				\def \margin {10} % margin in angles, depends on the radius
				\def \rotate {0}	% to rotate the cycle
				
				\foreach \s in {1,...,\n}
				{
					\node[hhh] at ({360/\n * (\s - 1)+\rotate}:\radius) {};
					\draw[ >=latex]  ({360/\n * (\s - 1)+\margin + \rotate}:\radius)
					arc ({360/\n * (\s - 1)+\margin+\rotate}:{360/\n * (\s)-\margin + \rotate}:\radius);
				}
				
			\end{scope}
		\end{scope}

		\begin{scope}[shift={(-3.75,-1.5)},scale=0.6]
			\node[hhh] 	(a) at (45:1cm) 	{};
			\node[hhh]  (b) at (135:1cm) 	{};
			\node[hhh] 	(c) at (225:1cm) 	{};
			\node[hhh] 	(d) at (-45:1cm) 	{};

			\node 	(h) at (-90:1.7cm) 	{$H_{5}$};
			
			\draw (a) -- (b)  (c) -- (d) ;
		\end{scope}
		
		\begin{scope}[shift={(-1.25,-1.5)},scale=0.6]
			\node[hhh] 	(a) at (0:1.2cm) 	{};
			\node[hhh]  (b) at (0:0.6cm) 	{};
			\node[hhh] 	(c) at (180:0.6cm) 	{};
			\node[hhh] 	(d) at (180:1.2cm) 	{};
			\node[hhh] 	(e) at (0:0cm) 	{};
			
			\node 	(h) at (-90:1.7cm) 	{$H_{6}$};
			
			\draw (a) -- (b) --(e) -- (c) -- (d);
		\end{scope}

		\begin{scope}[shift={(1.25,-1.5)},scale=0.6]
			\node[hhh] 	(a) at (90:1cm) 	{};
			\node[hhh]  (b) at (90:0.4cm) 	{};
			\node[hhh] 	(c) at (225:1.2cm) 	{};
			\node[hhh] 	(d) at (-45:1.2cm) 	{};
			\node[hhh] 	(e) at (-90:0.3cm) 	{};

			\node 	(h) at (-90:1.7cm) 	{$H_{7}$};
			
			\draw (a) -- (b) --(e) -- (c) -- (d) (d) -- (e);
		\end{scope}

		\begin{scope}[shift={(3.75,-1.5)},scale=0.6]
			\node[hhh] 	(a) at (45:1.2cm) 	{};
			\node[hhh]  (b) at (135:1.2cm) 	{};
			\node[hhh] 	(c) at (225:1.2cm) 	{};
			\node[hhh] 	(d) at (-45:1.2cm) 	{};
			\node[hhh] 	(e) at (180:0cm) 	{};

			\node 	(h) at (-90:1.7cm) 	{$H_{8}$};
			
			\draw (a) -- (b) --(e) -- (c) -- (d) (d) -- (e) -- (a);
		\end{scope}
		
	\end{tikzpicture}	
	\caption{The critically non-pseudo-split graphs}
	\label{Figure: the graphs in EC(Pseudo-split)}
\end{figure}

\section{Threshold Graphs}
Threshold graphs were characterized as follows:
\begin{theorem}\cite{chvtal1977aggregation}\label{Theorem: threshold graph chvtal charcaterization}
	A given a graph $G$ is threshold if and only if $G$ is $\{2K_{2}$, $P_{4}$, $C_{4}\}$-free.
\end{theorem}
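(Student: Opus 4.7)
The plan is to prove the characterization via the equivalent constructive definition of threshold graphs: $G$ is threshold if and only if it can be built from a single vertex by repeatedly appending either an isolated vertex or a universal (dominating) vertex. With this equivalence in hand, both directions of the characterization become short inductions.

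For the forward direction, let $G$ be threshold with construction sequence ending at a vertex $v_n$. By induction on $n$, $G-v_n$ is $\{2K_{2}, P_{4}, C_{4}\}$-free. Any forbidden induced subgraph $H$ of $G$ must contain $v_n$, otherwise it already sits inside $G-v_n$. But in $G$ the vertex $v_n$ is either isolated or universal, hence also inside $H$, so $v_n$ is adjacent either to none or to all three of the other vertices of $H$. Since none of $2K_{2}$, $P_{4}$, $C_{4}$ contains a vertex of degree $0$ or $3$, no such $H$ exists.

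For the reverse direction, it suffices, by induction on $|V(G)|$, to establish the key lemma: every $\{2K_{2}, P_{4}, C_{4}\}$-free graph on $n \geq 1$ vertices has either an isolated or a universal vertex. Granted the lemma, remove such a vertex, apply induction to the remainder, and append the removed vertex to extend the construction sequence.

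The key lemma is where the real work lies. First observe that the forbidden family is self-complementary, since $\overline{2K_{2}}=C_{4}$, $\overline{P_{4}}=P_{4}$, and $\overline{C_{4}}=2K_{2}$; so it is enough to show that a $\{2K_{2}, P_{4}, C_{4}\}$-free graph $G$ with no isolated vertex has a universal vertex. Pick $u$ of maximum degree and suppose, for contradiction, there exists $v \notin N[u]$. Since $v$ is not isolated, fix a neighbor $x$ of $v$; and since $\deg(v)\le\deg(u)$ with $uv \notin E$, there is a vertex $y\in N(u)\setminus N(v)$. A short case analysis on whether $x\in N(u)$ and on the adjacency of $x$ and $y$ forces one of the three forbidden subgraphs on $\{u,v,x,y\}$: an induced $2K_{2}$ arises when $x\notin N(u)$ and $xy,vy\notin E$, a $P_{4}$ along $v,x,u,y$ when $x\in N(u)$ and $xy\notin E$, and a $C_{4}$ on $\{u,x,v,y'\}$ for a suitably chosen $y'$ otherwise. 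The main obstacle is precisely this case analysis, but the self-complementary nature of the forbidden family keeps the number of genuinely distinct subcases small, and the remainder of the argument is routine induction.
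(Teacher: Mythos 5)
The paper offers no proof of this statement: it is quoted verbatim from Chv\'atal and Hammer with a citation, so there is no internal argument to compare yours against. Your route---the build-sequence characterization of threshold graphs plus the key lemma that every $\{2K_{2},P_{4},C_{4}\}$-free graph has an isolated or universal vertex---is the standard textbook proof, and your forward direction is complete and correct. One caveat at the level of the statement itself: in the cited source ``threshold'' is defined via vertex weights and a threshold value, so the equivalence with the ``repeatedly append an isolated or universal vertex'' definition is itself a substantive part of the theorem; your argument takes that equivalence as given rather than proving it.

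The proof of your key lemma has two concrete gaps. First, the claim that $\deg(v)\le\deg(u)$ and $uv\notin E$ force some $y\in N(u)\setminus N(v)$ is false: if $N(u)=N(v)$ the degrees are equal and no such $y$ exists. This case needs separate treatment---if some $x,x'\in N(u)$ are nonadjacent then $\{u,x,v,x'\}$ induces a $C_{4}$, and otherwise any $x\in N(u)$ satisfies $\deg(x)\ge\deg(u)+1$, contradicting the choice of $u$. Second, the case analysis does not close. When $x\in N(u)$ and $xy\in E$, the set $\{u,v,x,y\}$ induces a paw (triangle with a pendant vertex), which is not one of the forbidden graphs, and the proposed ``$C_{4}$ on $\{u,x,v,y'\}$ for a suitably chosen $y'$'' requires some $y'\in N(u)\cap N(v)$ nonadjacent to $x$, which need not exist. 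The correct escape is: if some $z\in N(u)\setminus\{x\}$ is nonadjacent to $x$, then $\{u,x,v,z\}$ induces a $C_{4}$ or $\{z,u,x,v\}$ induces a $P_{4}$ according to whether $z\sim v$; if no such $z$ exists, then $N(u)\setminus\{x\}\subseteq N(x)$ together with $x\sim u$ and $x\sim v$ gives $\deg(x)\ge\deg(u)+1$, again contradicting maximality. So the lemma is true and your strategy is the right one, but as written two of its steps fail and must be repaired by the degree-maximality argument rather than by exhibiting a forbidden subgraph.
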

Thus, we call a graph that is $\{2K_{2}, P_{4}, C_{4}\}$-exist \emph{non-threshold} graph.

By Theorems \ref{Theorem: characeterization}, \ref{Theorem: 2K2-free characterization}, \ref{Theorem: P4-free characterization}, and \ref{Theorem: C4-free characterization} and Propositions \ref{pro: only one free-split for cycles}, \ref{Lemma: Cycle contraction}, and \ref{Lemma: EC(C3)}, we obtain:

\begin{theorem}{\label{Theorem: split characterization short form1}}
	Let $G$ be a $C_{5}$-free graph that is non-isomorphic to any graph in Figure \ref{Figure: the graphs in EC(split)}. The graph $G$ is threshold if and only if any $G$-contraction is threshold.
\end{theorem}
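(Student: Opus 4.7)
The plan is to reduce the claim to Theorem~\ref{Theorem: characeterization} applied to $\mathcal{H} = \{2K_{2}, P_{4}, C_{4}\}$. By Chvátal's characterization (Theorem~\ref{Theorem: threshold graph chvtal charcaterization}) a graph is threshold precisely when it is $\mathcal{H}$-free, so the hypotheses of Theorem~\ref{Theorem: characeterization} that still need checking are that $G$ is $\fs(\mathcal{H})$-free and that $G$ is not critically $\mathcal{H}$-exist.

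First I would show that $\fs(\{2K_{2}, P_{4}, C_{4}\}) = \emptyset$. Any graph in this set would have to be threshold (so $2K_{2}$-, $P_{4}$-, and $C_{4}$-free) while admitting a contraction isomorphic to $2K_{2}$, $P_{4}$, or $C_{4}$. Corollaries~\ref{Corollary: FCC(2K2)} and~\ref{Corollary: FCC(P4)} preclude the first two possibilities; Corollary~\ref{Corollary: FCC(C4)} then leaves $C_{5}$ as the unique candidate, but $C_{5}$ contains an induced $P_{4}$ and hence is not threshold. Thus the $\fs$-hypothesis of Theorem~\ref{Theorem: characeterization} is vacuously satisfied.

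Next, I would confine the critically $\{2K_{2}, P_{4}, C_{4}\}$-exist graphs that are $C_{5}$-free to the list in Figure~\ref{Figure: the graphs in EC(split)}. Let $G$ be such a graph: every $G$-contraction is threshold, hence split and $P_{4}$-free, while $G$ itself fails to be threshold. If $G$ is not split, then $G$ is critically $\{2K_{2}, C_{4}, C_{5}\}$-exist and Theorem~\ref{Theorem: split characterization short form} (together with Propositions~\ref{pro: only one free-split for cycles}, \ref{Lemma: Cycle contraction}, and~\ref{Lemma: EC(C3)}, which already underpin the split characterization) places $G$ in Figure~\ref{Figure: the graphs in EC(split)}. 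Otherwise, $G$ is split but contains an induced $P_{4}$; every $G$-contraction is $P_{4}$-free while $G$ is not, so Theorem~\ref{Theorem: P4-free characterization} forces $G$ to be one of the graphs in Figure~\ref{Figure: the graphs in EC(P4)}. A direct inspection reveals that each of those graphs is either $C_{5}$ itself or contains an induced $C_{4}$, contradicting $G$ being $C_{5}$-free and split; this sub-case is therefore vacuous.

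Combining the two steps, any $C_{5}$-free graph $G$ not isomorphic to a graph in Figure~\ref{Figure: the graphs in EC(split)} satisfies the hypotheses of Theorem~\ref{Theorem: characeterization} for $\mathcal{H} = \{2K_{2}, P_{4}, C_{4}\}$, which yields the desired biconditional. The only non-routine part of the argument will be the inspection of Figure~\ref{Figure: the graphs in EC(P4)} in the ``split but $P_{4}$-exist'' sub-case, where one has to exhibit in each listed graph either an induced $C_{5}$ or an induced $C_{4}$; every other step is a mechanical chaining of the previously proved theorems and corollaries.
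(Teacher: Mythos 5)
Your reduction to Theorem~\ref{Theorem: characeterization} with $\mathcal{H}=\{2K_{2},P_{4},C_{4}\}$ and your verification that $\fs(\mathcal{H})=\emptyset$ are correct and follow the same route the paper takes (the paper offers no further detail, simply citing the earlier results). The gap is in your second step, in the ``split but $P_{4}$-exist'' sub-case. The claimed inspection of Figure~\ref{Figure: the graphs in EC(P4)} is false: the family $H_{5}$ of that figure in its smallest instance is the gem, i.e.\ $P_{4}$ together with a single dominating vertex, which is not $C_{5}$ and contains no induced $C_{4}$; likewise $P_{4}$ itself (the degenerate instance of $H_{4}$/$H_{5}$) is critically $P_{4}$-exist. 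Both graphs are split (they are $\{2K_{2},C_{4},C_{5}\}$-free), $C_{5}$-free, not threshold, and not isomorphic to any graph in Figure~\ref{Figure: the graphs in EC(split)}, so the sub-case is not vacuous and the argument breaks down exactly there.

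In fact these two graphs refute the statement as printed: every contraction of $P_{4}$ is $P_{3}$, and every contraction of the gem is a threshold graph on four vertices (a diamond or a paw), so both satisfy ``every $G$-contraction is threshold'' while failing to be threshold. The theorem is evidently intended to exclude the graphs of Figure~\ref{Figure: the graphs in EC(thershold)} (which the text never cites); that list consists of $H_{1}$--$H_{4}$ and $H_{7}$ of Figure~\ref{Figure: the graphs in EC(split)} together with precisely $P_{4}$ and the gem. With that exclusion your argument does go through, because every remaining graph of Figure~\ref{Figure: the graphs in EC(P4)} either equals $C_{5}$ or contains an induced $C_{4}$ (e.g.\ $\{s,t,u,v\}$ in $H_{2}$ and $H_{3}$, $\{r,s,t,v\}$ in $H_{4}$, and two dominating vertices together with the two leaves of the path in the larger instances of $H_{5}$). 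So the missing ingredient is not a different technique but the recognition that the $C_{5}$-free critically non-threshold graphs properly extend the $C_{5}$-free critically non-split ones by these two split, critically $P_{4}$-exist graphs.
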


\begin{figure}[ht!]
	\centering
	\begin{tikzpicture}[hhh/.style={draw=black,circle,inner sep=2pt,minimum size=0.2cm}]
		\begin{scope}[shift={(-2,0)}]
			\node 		(h) at (0,0)	 	{$H_{1}$};
			\begin{scope}[shift={(0,1.3)}]
				\def \n {4}
				\def \radius {0.7cm}
				\def \radiusCorrect {3}
				\def \margin {10} % margin in angles, depends on the radius
				\def \rotate {0}	% to rotate the cycle
				
				\node[hhh]  (e) at (-1.5,0)		{};
				\node[hhh]  (f) at (-2.5,0)		{};
				\node[text centered]  (g) at (-1.9,0)		{$\dots$};
				\draw ({360/\n * (2 - 1)+\rotate}:\radius)--(e)--({360/\n * (4 - 1)+\rotate}:\radius)
				({360/\n * (2 - 1)+\rotate}:\radius)--(f)--({360/\n * (4 - 1)+\rotate}:\radius);
				\foreach \s in {1,...,\n}
				{
					\node[hhh,fill=white] at ({360/\n * (\s - 1)+\rotate}:\radius) {};
					\draw[ >=latex]  ({360/\n * (\s - 1)+\margin + \rotate}:\radius)
					arc ({360/\n * (\s - 1)+\margin+\rotate}:{360/\n * (\s)-\margin + \rotate}:\radius);
				}
			\end{scope}
		\end{scope}
		
		\begin{scope}[shift={(0.5,0)}]
			\node 		(h) at (0,0)	 	{$H_{2}$};
			\begin{scope}[shift={(0,1.3)}]
				\def \n {4}
				\def \radius {0.7cm}
				\def \radiusCorrect {3}
				\def \margin {10} % margin in angles, depends on the radius
				\def \rotate {45}	% to rotate the cycle
				
				\foreach \s in {1,...,\n}
				{
					\node[hhh] at ({360/\n * (\s - 1)+\rotate}:\radius) {};
					\draw[ >=latex]  ({360/\n * (\s - 1)+\margin + \rotate}:\radius)
					arc ({360/\n * (\s - 1)+\margin+\rotate}:{360/\n * (\s)-\margin + \rotate}:\radius);
				}
				\node[hhh]  (e) at (0,0)		{};
				\draw ({360/\n * (1 - 1)+\rotate}:\radius-\radiusCorrect)--(e)--({360/\n * (2 - 1)+\rotate}:\radius-\radiusCorrect)
				({360/\n * (3 - 1)+\rotate}:\radius-\radiusCorrect)--(e)--({360/\n * (4 - 1)+\rotate}:\radius-\radiusCorrect);
			\end{scope}
		\end{scope}
		
		\begin{scope}[shift={(3.5,0)}]
			\node 		(h) at (0,0)	 	{$H_{3}$};
			\begin{scope}[shift={(0,1.3)}]
				\def \n {6}
				\def \radius {0.7cm}
				\def \radiusCorrect {3}
				\def \margin {10} % margin in angles, depends on the radius
				\def \rotate {0}	% to rotate the cycle
				
				\foreach \s in {1,...,\n}
				{
					\node[hhh] at ({360/\n * (\s - 1)+\rotate}:\radius) {};
					\draw[ >=latex]  ({360/\n * (\s - 1)+\margin + \rotate}:\radius)
					arc ({360/\n * (\s - 1)+\margin+\rotate}:{360/\n * (\s)-\margin + \rotate}:\radius);
				}
				\draw ({360/\n * (3 - 1)+\rotate}:\radius-\radiusCorrect)--({360/\n * (0)+\rotate}:\radius-\radiusCorrect)
				({360/\n * (5 - 1)+\rotate}:\radius-\radiusCorrect)--({360/\n * (0)+\rotate}:\radius-\radiusCorrect)
				({360/\n * (2 - 1)+\rotate}:\radius-\radiusCorrect)--({360/\n * (4-1)+\rotate}:\radius-\radiusCorrect)
				({360/\n * (6 - 1)+\rotate}:\radius-\radiusCorrect)--({360/\n * (4-1)+\rotate}:\radius-\radiusCorrect)
				({360/\n * (2 - 1)+\rotate}:\radius-\radiusCorrect)--({360/\n * (6-1)+\rotate}:\radius-\radiusCorrect)
				({360/\n * (3 - 1)+\rotate}:\radius-\radiusCorrect)--({360/\n * (5-1)+\rotate}:\radius-\radiusCorrect);
			\end{scope}
		\end{scope}

		\begin{scope}[shift={(-3.75,-1.5)},scale=0.6]
			\node[hhh] 	(a) at (45:1cm) 	{};
			\node[hhh]  (b) at (135:1cm) 	{};
			\node[hhh] 	(c) at (225:1cm) 	{};
			\node[hhh] 	(d) at (-45:1cm) 	{};

			\node 	(h) at (-90:1.7cm) 	{$H_{4}$};
			
			\draw (a) -- (b)  (c) -- (d) ;
		\end{scope}
		
		\begin{scope}[shift={(-1.25,-1.5)},scale=0.6]
			\node[hhh] (r) at (-1.5,0) 	{};
			\node[hhh] (s) at (-0.5,0) 	{};
			\node[hhh] (t) at (0.5,0) 	{};
			\node[hhh] (u) at (1.5,0) 	{};
			\node 	(h) at (-90:1.7cm) 	{$H_{5}$};
			
			\draw (r)--(s)--(t)--(u);
		\end{scope}

		\begin{scope}[shift={(1.25,-1.5)},scale=0.6]
			\node[hhh] (r) at (-1.5,-0.8) 	{};
			\node[hhh] (s) at (-0.5,-0.8) 	{};
			\node[hhh] (t) at (0.5,-0.8) 	{};
			\node[hhh] (u) at (1.5,-0.8) 	{};
			\node[hhh] (v) at (0,0.5) 	{};
			\node 	(h) at (-90:1.7cm) 	{$H_{6}$};
			
			\draw (s)--(v)--(r)--(s)--(t)--(u)--(v)--(t);
		\end{scope}

		\begin{scope}[shift={(3.75,-1.5)},scale=0.6]
			\node[hhh] 	(a) at (45:1.2cm) 	{};
			\node[hhh]  (b) at (135:1.2cm) 	{};
			\node[hhh] 	(c) at (225:1.2cm) 	{};
			\node[hhh] 	(d) at (-45:1.2cm) 	{};
			\node[hhh] 	(e) at (180:0cm) 	{};

			\node 	(h) at (-90:1.7cm) 	{$H_{7}$};
			
			\draw (a) -- (b) --(e) -- (c) -- (d) (d) -- (e) -- (a);
		\end{scope}

	\end{tikzpicture}	
	\caption{The critically non-thershold graphs}
	\label{Figure: the graphs in EC(thershold)}
\end{figure}
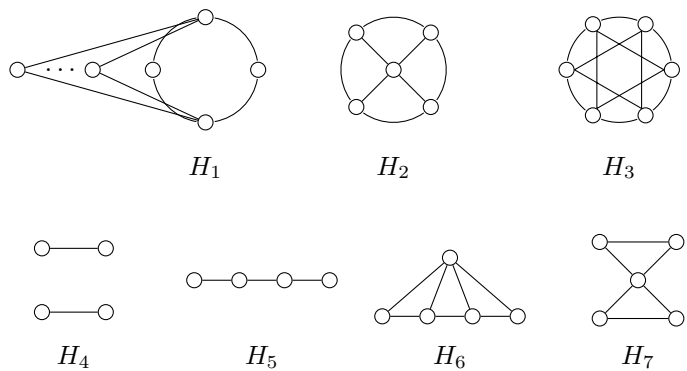

\section{Acknowledgments}
The research presented here is funded by the European Social Fund (ESF).

\bibliographystyle{chicago}
\bibliography{mybib}

\end{document}